\def\section{\@startsection{section}{1}%
 \z@{.7\linespacing\@plus\linespacing}{.5\linespacing}%
 {\normalfont\bfseries\scshape\centering}}
\def\subsection{\@startsection{subsection}{2}%
  \z@{.5\linespacing\@plus\linespacing}{.5\linespacing}%
  {\normalfont\bfseries\scshape}}
\def\subsubsection{\@startsection{subsubsection}{3}%
 \z@{.5\linespacing\@plus\linespacing}{-.5em}
 {\normalfont\bfseries}}
\newtheorem{Theorem}{Theorem}[section]
\newtheorem{Lemma}[Theorem]{Lemma}
\newtheorem{Proposition}[Theorem]{Proposition}
\newtheorem{Corollary}[Theorem]{Corollary}
\theoremstyle{definition}
\newtheorem{Definition}[Theorem]{Definition}
\newtheorem{Remark}[Theorem]{Remark}
\newtheorem{Example}[Theorem]{Example}
\def\qee{$\hfill{\Box}$}
\newcommand\bi[2]{{{#1}\atopwithdelims(){#2}}}
\newfont{\bbold}{msbm10 scaled \magstep1}
\newfont{\bbolds}{msbm7 scaled \magstep1}
\newcommand{\ns}{\mathbb{N}}
\newcommand{\zs}{\mathbb{Z}}
\newcommand{\rs}{\mathbb{R}}
\newcommand{\JJ}{J}
\newcommand{\JI}{\mathcal J}
\newcommand{\UU}{\mathcal J}
\newcommand{\jj}{j}
\newcommand{\hh}{h}
\newcommand{\cj}{\mathfrak{j}}
\newcommand{\ch}{\mathfrak{h}}
\newcommand{\ctS}{\widetilde{\mathcal S}}
\newcommand{\cS}{\mathcal S}
\newcommand{\cC}{\mathcal C}
\newcommand{\cM}{\mathcal M}
\newcommand{\cP}{\mathcal P}
\newcommand{\cT}{\mathcal T}
\newcommand{\cH}{\mathcal H}
\newcommand{\beq}{\begin{equation}}
\newcommand{\eeq}{\end{equation}}
\newcommand{\gf}{generating function}
\def\emm#1,{{\em #1}}
\newcommand{\la}{\lambda}
\newcommand{\tA}{\widetilde A}
\newcommand{\tS}{\widetilde S}
\renewcommand{\H}{\mathcal{H}}
\newcommand{\Ht}{\widetilde{\H}}
\newcommand{\SP}{\H_1}
\newcommand{\seq}{\mathbf{s}}
\newcommand{\Max}{\operatorname{Max}}
\newcommand{\Min}{\operatorname{Min}}
\newcommand{\inv}{\operatorname{inv}}
\newcommand{\area}{\operatorname{area}}
\newcommand{\bijheapscycle}{\operatorname{\psi}}
\newcommand{\Col}{\operatorname{Col}}
\newcommand{\BijDiagramPyramid}{\operatorname{\Upsilon}}
\newcommand{\Cylset}{\mathcal{O}}
\begin{document}
\title{321-avoiding affine permutations and their many heaps}

\author[R. Biagioli]{Riccardo Biagioli}
\address{R. Biagioli, F, Jouhet, P. Nadeau: Univ Lyon, Universit\'e Claude Bernard Lyon 1, CNRS UMR 5208, Institut Camille Jordan, 43 blvd. du 11 novembre 1918, F-69622 Villeurbanne cedex, France}
\email{biagioli@math.univ-lyon1.fr, jouhet@math.univ-lyon1.fr, nadeau@math.univ-lyon1.fr}

\author[F. Jouhet]{Frédéric Jouhet}
%
\author[P. Nadeau]{Philippe Nadeau}

\begin{abstract}
We study $321$-avoiding affine permutations, and prove a formula for their enumeration with respect to the inversion number by using a combinatorial approach. This is done in two different ways, both related to Viennot's theory of heaps. First, we encode these permutations using certain heaps of monomers and dimers. This method specializes to the case of affine involutions. For the second proof, we introduce periodic parallelogram polyominoes, which are new combinatorial objects of independent interest. We enumerate them by extending the approach of Bousquet-M\'elou and Viennot used for classical parallelogram polyominoes. We finally establish a connection between these new objects and $321$-avoiding affine permutations.
\end{abstract}
\date{\today}

\maketitle


\section{Introduction}\label{sec:intro}

The symmetric group $S_n$ can be viewed as the Coxeter group of type $A_{n-1}$. In this correspondence, the Coxeter length of the permutation is the inversion number.  Among permutations, those that avoid the pattern $321$ are of great interest in combinatorics and algebra. They are known to be counted by the $n$th Catalan number. From an algebraic point of view, Billey, Jockusch, and Stanley  showed in~\cite{BJS} that a permutation is $321$-avoiding if, and only if its corresponding element in the Coxeter group of type $A$ is fully commutative (FC), which means that any two of its reduced decompositions are related by a series of transpositions of adjacent commuting generators. These FC elements also naturally index a linear basis of the Temperley--Lieb algebra associated with the Coxeter group of type $A_{n-1}$.
 
 These considerations can be lifted to the affine case. A result of Green~\cite{Gre321} (independently rediscovered by Lam in~\cite{Lam}) shows that FC elements in the affine Coxeter group of type $\tA_{n-1}$ are also characterized to be $321$-avoiding, once interpreted as infinite (or affine) permutations (see Section~\ref{sec:321}, where precise definitions regarding these permutations are recalled). Here again, the length corresponds to the inversion number. 
Algebraically, $321$-avoiding affine permutations  are connected with the affine case of Stanley's symmetric functions defined in~\cite{Stanley}. More precisely, it is shown by Lam~\cite{Lam} that the affine  Stanley symmetric function $\tilde{F}_w$ associated with any $321$-avoiding affine permutation $w$ is equal to a cylindric skew  Schur function, which is also proved to be a skew affine Schur function. Such generalizations of the classical symmetric Schur functions were actually introduced by Postnikov in~\cite{Postnikov}, where the connection was established with the so-called affine nil Temperley--Lieb algebra. Postnikov also observed that combinatorics on cylindric (skew) Schur functions can describe a quantum cohomology of the Grassmannian. 

There is an infinite number of $321$-avoiding affine permutations of a given size, so we calculate how many of them have a fixed inversion number. More generally, for any Coxeter group $W$, it is interesting to compute the generating function $W^{FC}(q)$ for FC elements, where $q$ records the Coxeter length. Algebraically, this yields information on the growth of the associated generalized Temperley--Lieb algebra $TL(W)$ defined by Graham in~\cite{Graham}, or equivalently the Hilbert series of the associated graded nil Temperley--Lieb algebra.
By an approach involving families of lattice paths, recursive expressions for these series were given in~\cite{BJN-long}, for all finite and affine Coxeter groups. As a consequence, it was proved that the associated generalized Temperley--Lieb algebra has at most linear growth when the Coxeter group is irreducible and affine. For any classical finite or affine family of Coxeter groups $(W_n)_n$, one naturally introduces the bivariate generating function in $x$ and $q$
\begin{equation}
\label{eq:bivariate}
\sum_n W_n^{FC}(q)x^n.
\end{equation}
 In~\cite{BBJN}, explicit expressions for these series are computed by recursive methods. The counterpart for involutions was also treated there, while the lattice path point of view for them was examined in~\cite{BJN-inv}.

 By using $321$-avoiding permutations, Barcucci \emph{et al.} proved~\cite{barcucci}  in the type $A$ case an elegant explicit expression for the above bivariate generating function, as a $q$-logarithmic derivative of a $q$-Bessel type series $J(x)$: this is the first formula in Theorem~\ref{thm:A-tA} of Section~\ref{sec:321} below. In~\cite{BBJN}, the recursive methods also yielded a simple expression in type $\tA$, the striking fact being that this time the logarithmic derivative (with respect to $x$) of the same series $J(x)$ occurs: this is the second formula of Theorem~\ref{thm:A-tA}. Moreover, it was also proved that for $321$-avoiding (affine) involutions arise the same kinds of expressions as ($q$-)logarithmic derivatives of a simpler $q$-hypergeometric series $\mathcal{J}(x)$, see Theorem~\ref{thm:A-tA-inv}.\\

The main motivation of the present paper is to provide a combinatorial framework explaining bijectively Theorems~\ref{thm:A-tA} and~\ref{thm:A-tA-inv}. To this aim, we will introduce in Section~\ref{sec:321} particular posets, which we call affine alternating diagrams: our starting point will then be a bijection, expressed in Theorem~\ref{thm:bijectionSD}, between them and $321$-avoiding affine permutations. This bijection restricts nicely both to finite permutations and to involutions. These diagrams have been studied under other names: they arise as a way of encoding the whole commutation class for FC elements in affine type $\tA$, in the spirit of the initial work on FC elements by Stembridge~\cite{St3}, Green~\cite{Gre321}, Hagiwara~\cite{HagiwaraAtilde}, or the authors~\cite{BJN-long}. They also correspond essentially to the skew cylindric shapes that index cylindric skew Schur functions, though our representation is slightly different~\cite{Lam}.

 Once $321$-avoiding affine permutations are interpreted in terms of affine alternating diagrams, we will give two different combinatorial approaches towards the proofs of the generating functions mentioned above, both based on Viennot's general theory of heaps~\cite{Viennot1}. Section~\ref{sec:heaps} is therefore devoted to a collection of definitions and properties on heaps of pieces and cycles, together with a proof of the so-called Inversion Lemma (namely Lemma~\ref{lem:inversion}) and its adaptation to the enumeration of pyramids (see Corollary~\ref{Cor:enumpyramids}).

In Section~\ref{sec:heaps-monomeres-dimeres}, we will show how to use heaps of cycles and transform them in our case in terms of particular heaps of monomers and dimers: the main bijective result is given in Theorem~\ref{thm:diagramstopyramids}. Thanks to the Inversion Lemma, we will show how the enumeration of $321$-avoiding affine permutations will boil down to enumerating trivial heaps of monomers and dimers satisfying some specific conditions (see Theorem~\ref{theorem:trivialh}). By this approach, we will derive the generating functions  for finite and affine $321$-avoiding permutations of Theorem~\ref{thm:A-tA}, and their counterpart for involutions of Theorem~\ref{thm:A-tA-inv}. 

Our second bijective approach is detailed in Section~\ref{sec:parallelogram}, where we define a new family of combinatorial objects that we call periodic parallelogram polyominoes (PPPs) (see also~\cite{BBJN-bij}). Inspired by the seminal work of Bousquet-M\'elou and Viennot~\cite{mbm-viennot} on parallelogram polyominoes, we will prove in Proposition~\ref{prop:bijcyclicheap} that PPPs are in bijection with a set of heaps of segments satisfying some specific conditions. This will enable us to derive in Theorem~\ref{thm:enumPPP} the generating function of PPPs, with respect to a trivariate weight, as a logarithmic derivative in the variable $y$, of a $q$-series $N(x,y,q)$ which was introduced in~\cite{mbm-viennot}.  We note that PPPs were defined independently and studied in~\cite{boussicault-laborde} (see also~\cite{aval-ppp}), where they are interpreted in terms of binary trees and counted according to different parameters.

 Finally, we exhibit a combinatorial interpretation of $321$-avoiding affine permutations in terms of PPPs. This was our initial motivation for introducing these objects; as a consequence, we obtain the second bijective proof of Theorem~\ref{thm:A-tA}. 
 
 In a last and short section, we will propose some combinatorial problems raised by our approach.

\section{$321$-avoiding affine permutations and affine alternating diagrams}\label{sec:321}

In the whole section $n$ is an integer greater than $1$. 

\subsection[Enumeration of 321-avoiding affine permutations]{Enumeration of $321$-avoiding affine permutations}

\begin{Definition}
An \emph{affine permutation} of size $n$  is a bijective function $\sigma:\mathbb{Z}\rightarrow\mathbb{Z}$ such that $\sigma(i+n)=\sigma(i)+n$ for all $i\in\mathbb{Z}$, and  $\sum_{i=1}^n\sigma(i)=\sum_{i=1}^ni$.
\end{Definition}

 Affine permutations of size $n$  form  a group $\widetilde{S}_n$ under composition. One can write down an affine permutation through its biinfinite sequence of values $(\sigma(i))_{i\in\mathbb{Z}}$. This is the {\em complete notation} of $\sigma$. For example,
\[
\ldots \mid -14,5,-12,-9,-8,6,11,-7 \mid {\bf -6,13,-4,-1,0,14,19,1} \mid 2,21,4,9,8,22,27,9 \mid \ldots
\]
is an element of $\widetilde{S}_8$, where we highlighted the values $\sigma(1),\ldots,\sigma(8)$. Clearly any $\sigma \in \widetilde{S}_n$ is uniquely determined by its values on $\{1,\ldots,n\}$, and the expression $\sigma=[\sigma(1),\ldots,\sigma(n)]$ is usually called the {\em window notation} of $\sigma$. In our previous example, $\sigma=[-6,13,-4,-1,0,14,19,1]\in \widetilde{S}_8$. Note that if $\sigma \in S_n$, then there exists a unique $\tilde{\sigma} \in \widetilde{S}_n$ such that $\tilde{\sigma}(i)=\sigma(i)$ for $i \in \{1,\ldots,n\}$, and this allows one to consider $S_n$ as a subgroup of $\widetilde{S}_n$.

It is well-known (see \cite{LusztigTransactions, bjorner-brenti-book}) that $\widetilde{S}_n$ is a realization of the Coxeter system of type $\tA_{n-1}$ when we consider as generating set $\{s_0,s_1,\ldots,s_{n-1}\}$, where 
$$s_0=[0,2,3,\ldots,n-1,n+1] \ \mbox{and} \ s_i=[1,2\ldots,i-1,i+1,i,i+2,\ldots,n], \ i=1,\ldots,n-1.$$
We also define $s_n=s_0$ and $s_{-1}=s_{n-1}$. More generally, the indices of the generators are taken modulo $n$, which reflects the cyclic structure of the Dynkin diagram of type $\tA_{n-1}$ depicted below.

\begin{figure}[!ht]
\begin{center}
 \includegraphics[width=0.20\textwidth]{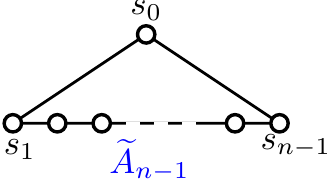}
 \end{center} 
\end{figure}

With respect to the generating set $\{s_0,s_1,\ldots,s_{n-1}\}$, there is a natural \emph{length function} $\sigma \mapsto \ell(\sigma)$ which counts the minimal number of generators needed to write $\sigma$ as a product of them. For $\sigma \in \widetilde{S}_n$, we define
\[
\inv(\sigma)=\left|\{(i,j) \in \{1,\ldots,n\} \times \mathbb{N} \mid  i<j\text{ and } \sigma(i)>\sigma(j)\}\right|.
\]
   This counts the number of {\em affine inversions} of $\sigma$. It coincides with the usual inversion number for finite permutations. Moreover, Shi \cite{Shi} showed that $\inv(\sigma)=\ell(\sigma)$ for any $\sigma \in \widetilde{S}_n$ where $\ell$ is the Coxeter length (see also \cite[\S8]{bjorner-brenti-book}). 
\smallskip

An affine permutation $\sigma$ is {\em $321$-avoiding} if there are no $i<j<k$ in $\mathbb{Z}$ satisfying $\sigma(i)>\sigma(j)>\sigma(k)$. Green showed in~\cite{Gre321} that an affine permutation is $321$-avoiding if, and only if it is \emph{fully commutative} as an element of the Coxeter system of type $\tA_{n-1}$. This generalizes the well-known result of Billey, Jockush and Stanley from~\cite{BJS} for the case of the symmetric group.  We have the following characterization~\cite{Gre321}, \cite[Prop. 2.1]{BJN-long}.

\begin{Proposition}
\label{prop:caracterisation_Atilde} An affine permutation $\sigma \in \widetilde{S}_n$ is 321-avoiding if, and only if, in  any reduced decomposition of $\sigma$, the occurrences of the generators $s_i$ and $s_{i+1}$ alternate for $i=0,\ldots,n-1$.
\end{Proposition}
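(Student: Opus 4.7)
The plan is to invoke Green's theorem \cite{Gre321}, which identifies $321$-avoiding affine permutations with fully commutative (FC) elements of the Coxeter system of type $\tA_{n-1}$, and then to prove the following general fact: an element of $\tA_{n-1}$ is FC if and only if in each of its reduced decompositions the occurrences of $s_i$ and $s_{i+1}$ alternate, for every $i\in\mathbb{Z}/n\mathbb{Z}$.

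By Matsumoto's theorem, any two reduced decompositions of $\sigma$ are related by a sequence of commutation moves $s_as_b\leftrightarrow s_bs_a$ (with $|a-b|\not\equiv 1\pmod n$) and braid moves $s_is_{i\pm 1}s_i\leftrightarrow s_{i\pm 1}s_is_{i\pm 1}$; by Stembridge's criterion \cite{St3}, $\sigma$ is FC exactly when no reduced decomposition contains a consecutive braid factor. Since no commutation move ever swaps two letters from $\{s_i,s_{i+1}\}$, the $\{s_i,s_{i+1}\}$-subword of a reduced decomposition is invariant inside each commutation class; consequently, alternation is a property of the commutation class, and the proposition boils down to showing that FC is equivalent to alternation in every reduced decomposition.

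One implication is direct. If some reduced decomposition contains a consecutive factor $s_is_{i+1}s_i$, then in its $\{s_{i-1},s_i\}$-subword the two $s_i$'s of the braid are adjacent, since the only letter between them in the full word is $s_{i+1}\neq s_{i-1}$ (valid when $n\geq 3$; the case $n=2$ is immediate because every element of $\widetilde{S}_2$ is $321$-avoiding and admits only strictly alternating reduced words). This yields ``alternation everywhere $\Rightarrow$ FC''. For the converse, suppose alternation fails for some pair $(s_j,s_{j+1})$ in a reduced decomposition of the form $us_jvs_ju'$, where the intermediate block $v$ contains no $s_j$ and no $s_{j+1}$. I would argue by strong induction on $|v|$ that $\sigma$ is not FC. Reducedness forces $v$ to contain at least one $s_{j-1}$, for otherwise every letter of $v$ commutes with $s_j$ and the two $s_j$'s cancel. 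If $v$ contains exactly one $s_{j-1}$, writing $v=v_1 s_{j-1} v_2$ and using that $v_1,v_2$ commute with $s_j$ produces the consecutive factor $s_js_{j-1}s_j$, a braid; if $v$ contains at least two $s_{j-1}$'s, picking two that are consecutive in $v$ creates a fresh alternation failure for the pair $(s_{j-1},s_j)$ whose intermediate block is a strict sub-factor of $v$, and the induction hypothesis applies.

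The main obstacle will be the bookkeeping in the inductive step: one must verify that the commutations needed to isolate the factor $s_js_{j-1}s_j$, or to extract the smaller intermediate block for the pair $(s_{j-1},s_j)$, are actually legitimate, which hinges on the fact that any intermediate block contains no element of $\{s_j,s_{j+1}\}$ and hence involves only letters commuting with $s_j$ apart from $s_{j-1}$. Combined with Green's theorem, these two implications yield the desired characterization.
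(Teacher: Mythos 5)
The paper does not actually prove this proposition: it is stated as a known characterization, with references to Green \cite{Gre321} and to \cite[Prop.~2.1]{BJN-long}. Your proposal, by contrast, supplies a self-contained argument for the substantive half (FC $\Leftrightarrow$ alternation in every reduced word), taking Green's $321$-avoiding $=$ FC theorem and Stembridge's braid-avoidance criterion as the only imported ingredients. The structure is sound: the forward implication correctly observes that a consecutive braid factor $s_is_{i+1}s_i$ forces two adjacent $s_i$'s in the $\{s_{i-1},s_i\}$-subword (for $n\geq 3$), and the converse induction on the length of the intermediate block is valid --- reducedness does force an occurrence of $s_{j-1}$ in a block avoiding $s_j$ and $s_{j+1}$, the single-occurrence case yields a braid factor after legitimate commutations, and the multiple-occurrence case produces a strictly shorter failure of the same shape for the pair $(s_{j-1},s_j)$, so the induction terminates even though the indices cycle modulo $n$. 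The small cases $n=2,3$ are also correctly accounted for.

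One point you should make explicit: an alternation failure for the pair $(s_j,s_{j+1})$ can occur in two forms, either two consecutive $s_j$'s whose intermediate block avoids $\{s_j,s_{j+1}\}$ (the configuration you treat, which propagates downward through $s_{j-1}, s_{j-2},\ldots$), or two consecutive $s_{j+1}$'s whose intermediate block avoids $\{s_j,s_{j+1}\}$, which propagates upward through $s_{j+2}, s_{j+3},\ldots$ and terminates in a braid factor $s_{j+1}s_{j+2}s_{j+1}$. The second form is handled by the mirror of your argument (or by appealing to the diagram automorphism $s_i\mapsto s_{-i}$ of $\tA_{n-1}$), so this is an omission of a symmetric case rather than a flaw, but as written the converse direction covers only half of the possible failures. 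With that case added, your proof is complete and gives the reader something the paper itself delegates entirely to the literature.
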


We denote by $\tS_{n}^{(321)}$ the set of $321$-avoiding affine permutations in $\widetilde{S}_n$, and $S_{n}^{(321)}$ its subset of finite permutations. For any subset $\mathcal{E}_n$ of $\tS_{n}^{(321)}$, we define its \emph{length generating series} by:
 $$\mathcal{E}_n(q):=\sum_{\sigma\in\mathcal{E}_n}q^{\ell(\sigma)}.$$
We also recall for $n\geq0$ the {\em $q$-Pochhammer symbol} $(x;q)_n:= (1-x)(1-xq)\cdots(1-xq^{n-1})$, and we define the two series:
\beq\label{J-def}
J(x):= \sum_{n\ge 0} \frac{(-x)^n q^{\binom{n}{2}}}{(q;q)_n(xq;q)_n},
\eeq
and 
\beq\label{DD-def}
\JI(x):=\sum_{n\geq0}\frac{(-1)^{\lceil n/2\rceil} x^nq^{\binom{n}{2}}}{(q^2;q^2)_{\lfloor n/2\rfloor}}.
\eeq
 
The two following enumerative theorems were proved in~\cite{BBJN} by using recursive decompositions (actually the first formula in Theorem~\ref{thm:A-tA} was first proved in~\cite{barcucci}).  
\begin{Theorem}\label{thm:A-tA}
Let  $S(x,q)$ and $\tS(x,q)$ be the generating functions defined by
\[
S(x,q):=\sum_{n\ge 0} S_{n+1}^{(321)}(q) x^n \qquad\hbox{and} \qquad \tS(x,q):=
\sum_{n\ge 1} \tS_{n}^{(321)}(q) x^n.
\]
 Then 
\[
S(x,q)= \frac 1 {1-xq}\frac{\JJ(xq)}{\JJ(x)}
\qquad\hbox{and} \qquad \tS(x,q)=- x \frac{\JJ'(x)}{\JJ(x)}- \sum_{n\ge 1}
 \frac{x^n  q^n}{1-q^n},
\]
where the derivative is taken with respect to $x$.
\end{Theorem}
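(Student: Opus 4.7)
The plan is to carry out the first combinatorial programme sketched in Section~\ref{sec:intro}. The starting ingredients are two bijections: Theorem~\ref{thm:bijectionSD} transports $\tS_n^{(321)}$ (together with its subset $S_n^{(321)}$) onto the set of affine alternating diagrams, and Theorem~\ref{thm:diagramstopyramids} encodes each such diagram as a Viennot heap of monomers and dimers. Under this composite encoding, the Coxeter length $\ell(\sigma)$ turns into a natural weight on the heap; moreover, genuinely affine permutations correspond to pyramids (connected heaps with a distinguished minimal piece), whereas finite permutations yield heaps satisfying an extra boundary condition. This two-step bijection translates Theorem~\ref{thm:A-tA} into two enumeration problems about heaps of monomers and dimers, which can then be attacked via the heap machinery developed in Section~\ref{sec:heaps}.

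For the finite series $S(x,q)$, I would invoke the Inversion Lemma (Lemma~\ref{lem:inversion}), which expresses the generating function of all heaps of a given type as the reciprocal $1/T(x,q)$ of the signed generating function $T(x,q)$ of trivial heaps (antichains of pairwise concurrent pieces). The core computation, which I expect to be the content of Theorem~\ref{theorem:trivialh}, identifies $T(x,q)$ with $J(x)$ from~\eqref{J-def}: the alternating sign $(-x)^n$ matches the signature of the Inversion Lemma, the factor $q^{\binom{n}{2}}$ arises from the pairwise incompatibility of $n$ dimer-like pieces in a trivial heap, and the denominator $(q;q)_n(xq;q)_n$ records the internal degrees of freedom (positions and sizes) of these pieces. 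The prefactor $1/(1-xq)$ and the shift from $J(x)$ to $J(xq)$ would then emerge from decoupling an extremal row or column of the heap that is free in the finite setting but absent (due to periodicity) in the affine one.

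For the affine series $\tS(x,q)$, I would apply Corollary~\ref{Cor:enumpyramids}, which expresses the generating function of pyramids as a logarithmic derivative of $1/T(x,q)$ with respect to the size variable; combined with $T(x,q)=J(x)$, this yields the main term $-x\,J'(x)/J(x)$. The remaining Lambert-type series $\sum_{n\geq 1} x^n q^n/(1-q^n) = \sum_{n,k\geq 1} x^n q^{nk}$ should appear as a correction: a family of degenerate pyramids specific to the periodic nature of affine diagrams (typically stacks built from a single translate class of one piece, repeated cyclically) is miscounted by the straightforward pyramid formula, and a direct enumeration of these exceptional pyramids produces precisely this series, which must be subtracted.

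The main obstacle is the identification formalised in Theorem~\ref{theorem:trivialh}: describing exactly which trivial heaps of monomers and dimers correspond to the heaps coming from affine alternating diagrams, and verifying that their signed generating function equals $J(x)$. Once this is in hand, the finite formula follows from the Inversion Lemma combined with a careful analysis of the extremal boundary contribution responsible for the factor $1/(1-xq)$ and the shift $J(x)\to J(xq)$, while the affine formula follows from the pyramid analogue of the Inversion Lemma together with the direct enumeration of the degenerate periodic pyramids that produce the Lambert-type correction.
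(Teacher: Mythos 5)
Your overall architecture is the paper's first proof: diagrams $\to$ pyramids of monomers and dimers via Theorems~\ref{thm:bijectionSD} and~\ref{thm:diagramstopyramids}, then the Inversion Lemma~\ref{lem:inversion} and Corollary~\ref{Cor:enumpyramids} with the trivial-heap series computed in Theorem~\ref{theorem:trivialh}. However, two concrete steps in your accounting would fail as written. First, the signed generating function of the relevant trivial heaps is \emph{not} $\JJ(x)$: Theorem~\ref{theorem:trivialh} and Corollary~\ref{coro:trivialj} identify it with $\jj(x)$ from~\eqref{j-def}, which is related to~\eqref{J-def} by $\JJ(x)=\jj(x)/(xq;q)_\infty$. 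The logarithmic derivative therefore first comes out as $-x\jj'(x)/\jj(x)$, and the conversion to $\JJ$ itself produces a Lambert-type series, $-x\jj'/\jj=-x\JJ'/\JJ+\sum_{n\ge 1}x^nq^n/(1-q^n)$. Meanwhile the degenerate column heaps $\Col_n$ you correctly flag (all-$L$ or all-$R$ stacks of $n$ monomers at a fixed positive abscissa) contribute $2\sum_{n\ge 1}x^nq^n/(1-q^n)$, the factor $2$ coming from the two labels. The final coefficient $-1$ on the Lambert series is the sum $+1-2$; your version, which takes the trivial-heap series to be $\JJ(x)$ and attributes the whole correction to the degenerate pyramids, would yield $-2\sum_{n\ge 1}x^nq^n/(1-q^n)$ and hence the wrong formula.

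Second, the affine permutations do not correspond to plain pyramids but to pyramids \emph{marked at one of the $\ell(M)+1$ points of their unique maximal segment} $M$ (the set $\Pi_{md}^{*\bullet}$); your ``distinguished minimal piece'' is a misdescription, and more importantly the mark cannot be dropped. Since the weight~\eqref{eq:poidsMD} has $x$-exponent $\ell(H)+|H|$ rather than $|H|$, the unmarked Corollary~\ref{Cor:enumpyramids} produces the numerator $\sum_T(-1)^{|T|}|T|v(T)$, which is not $x\,\partial_x$ of the denominator; only the marked refinement, with numerator $\sum_T(-1)^{|T|}(\ell(T)+|T|)v(T)$, gives the logarithmic derivative in $x$. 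Finally, for $S(x,q)$ the finite permutations correspond to heaps whose maximal pieces lie in $\{[0],[0,1]\}$ (the mark being forced), so the Inversion Lemma numerator is the signed series of trivial heaps avoiding abscissa $0$, namely $\hh(xq)$ by translation; the identity $\jj(x)=\hh(x)+x\hh(xq)$ then gives $S=\jj(xq)/\jj(x)$, and the prefactor $1/(1-xq)$ appears only upon rewriting $\jj$ in terms of $\JJ$ --- it is not a separate ``extremal row'' contribution.
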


The counterpart for the set $\cS_{n+1}^{(321)}$ ({\em resp.}  $\ctS_{n}^{(321)}$) of 321-avoiding ({\em resp.} affine) involutions reads as follows.

\begin{Theorem}\label{thm:A-tA-inv}
Let  $\cS(x,q)$ and $\ctS(x,q)$ be the generating functions defined by
$$
\cS(x,q):=\sum_{n\ge 0} \cS_{n+1}^{(321)}(q) x^n \qquad\hbox{and} \qquad \ctS(x,q):=
\sum_{n\ge 1} \ctS_{n}^{(321)}(q) x^n.
$$
Then
$$
\cS(x,q)= \frac{\JI(-xq)}{\JI(x)}
\qquad\hbox{and} \qquad 
\ctS(x,q)= -x\,
\frac{ \JI'(x)}{\JI(x)},
$$
where the derivative is taken with respect to $x$.
\end{Theorem}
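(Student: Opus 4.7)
The plan is to mirror, in the involution setting, the heap-theoretic strategy that the authors use to prove Theorem~\ref{thm:A-tA}, only replacing every step by its symmetric counterpart. Concretely, I would proceed in four stages.

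First, I would use the restriction to involutions of the bijection of Theorem~\ref{thm:bijectionSD}: a $321$-avoiding (affine) involution corresponds to a self-conjugate affine alternating diagram, where self-conjugate means invariant under the natural reflection reflecting the involution condition $\sigma=\sigma^{-1}$. Then I would push this symmetry through the encoding of Theorem~\ref{thm:diagramstopyramids}, so that the involution is represented by a symmetric pyramid of monomers and dimers. The pyramids arising should be precisely those fixed by the piece-reversing involution, and the associated trivial heaps (pyramids having a single minimal piece, namely the one lying on the axis of symmetry, or a pair of swapped minimal pieces) should take a particularly rigid form: off-axis pieces come in symmetric pairs, while at most one monomer can sit on the axis.

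Second, I would apply the Inversion Lemma (Lemma~\ref{lem:inversion}) and its pyramid counterpart (Corollary~\ref{Cor:enumpyramids}) exactly as in the proof of Theorem~\ref{thm:A-tA}. For the finite case this yields a ratio of generating functions of symmetric trivial heaps, and for the affine case it yields a logarithmic derivative in $x$ of the same series, because passing to cyclic (pyramid) structures replaces the reciprocal by $-x\,(\cdot)'/(\cdot)$. In particular, the numerator in the finite case comes from the shift $x \mapsto -xq$ induced by an extra piece pinned at the bottom of the pyramid, which explains the appearance of $\JI(-xq)$ versus $\JI(x)$.

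Third, the crucial computational step is to identify the generating function of these symmetric trivial heaps as $\JI(x)$. Here the gain over the non-involutive case is that pairs of off-axis pieces can be treated jointly, which replaces the $(q;q)_n(xq;q)_n$ denominator of $J(x)$ by a single $(q^2;q^2)_{\lfloor n/2 \rfloor}$ factor (each symmetric pair contributing a $q^{2k}$ to a $q^2$-Pochhammer), while the overall weight of a trivial heap with $n$ pieces still scales like $x^n q^{\binom{n}{2}}$ from the triangular stacking constraint. The sign $(-1)^{\lceil n/2 \rceil}$ is the image, under the piece-pairing reduction, of the usual trivial-heap sign $(-1)^n$, since only $\lceil n/2\rceil$ of the $n$ pieces are truly independent.

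The main obstacle will be the third step: verifying term by term that the restricted trivial heap generating function equals $\JI(x)$. The difficulty is not in the qualitative shape of the answer, which is forced by the pairing argument, but in checking that the combinatorics of symmetric monomer-dimer configurations on the affine Dynkin diagram (with its cyclic symmetry and the extra reflection from the involution) correctly produces both the factor $(-1)^{\lceil n/2 \rceil}$ and the $q^{\binom{n}{2}}$ exponent without any spurious correction term. Once this identification is made, the two formulas of Theorem~\ref{thm:A-tA-inv} follow immediately from the Inversion Lemma and Corollary~\ref{Cor:enumpyramids}, in complete analogy with the derivation of Theorem~\ref{thm:A-tA}.
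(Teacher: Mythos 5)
Your overall architecture (restrict the bijections $\Delta$ and $\BijDiagramPyramid$ to involutions, apply the Inversion Lemma~\ref{lem:inversion} and its pyramid variant, then compute the signed generating function of the relevant trivial heaps) is exactly the paper's strategy, but the step on which everything hinges is wrong. Involutions do \emph{not} correspond to pyramids fixed by a piece-reversing symmetry with ``off-axis pieces in symmetric pairs''. By Theorem~\ref{theorem:walksonG}(ii) a diagram is self-dual if and only if the associated walk has loops only at vertex $0$, and hence by Theorem~\ref{thm:diagramstopyramids}(ii) the corresponding pyramids are characterized by a \emph{support} condition, not a symmetry condition: monomers may occur only at abscissa $0$ (and there only with label $L$). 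Consequently the trivial heaps to be enumerated are trivial heaps of dimers together with at most one monomer at abscissa $0$; there is no axis of symmetry, no pairing of pieces, and no ``piece-pairing reduction''. Your step~3, as described, therefore cannot be carried out to yield $\JI(x)$.

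The correct computation, and the correct reading of~\eqref{DD-def}, are as follows. With the weight of Definition~\ref{def:poidsmd} a dimer contributes $x^2$, so the index $n$ in $\JI(x)=\sum_n(-1)^{\lceil n/2\rceil}x^nq^{\binom n2}/(q^2;q^2)_{\lfloor n/2\rfloor}$ records $\ell(T)+|T|$ (monomers plus twice the dimers), not the number of pieces; the sign $(-1)^{\lceil n/2\rceil}$ is the honest $(-1)^{|T|}$ for a heap of $\lfloor n/2\rfloor$ dimers and $n-2\lfloor n/2\rfloor$ monomers; and the $(q^2;q^2)$ factor comes from the dimer weights $x^2q^{2i+1}$ (equivalently, from encoding a trivial heap of dimers as a partition into odd parts with consecutive gaps at least $4$), not from pairing pieces across an axis. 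Splitting according to the presence of a monomer at $0$ gives $\JI(x)=\ch(x)-x\,\ch(xq)$ with $\ch(x)=\sum_n(-1)^nx^{2n}q^{\binom{2n}2}/(q^2;q^2)_n$, and the affine formula then follows from the marked version of Corollary~\ref{Cor:enumpyramids} exactly as in the proof of Theorem~\ref{thm:A-tA}. For the finite case, your ``extra piece pinned at the bottom'' heuristic is also not how $\JI(-xq)$ arises: one gets $x\,\cS(x,q)=\ch(xq)/\JI(x)-1$, the numerator being the signed series of trivial heaps of dimers avoiding abscissa $0$, and one must then verify the identity $\JI(x)-\ch(xq)=-x\,\JI(-xq)$, which uses that $\ch$ is even in $x$. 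So the shape of your answer is forced correctly, but both key computations need to be redone starting from the correct characterization of the involutive pyramids.
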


The rest of the paper is devoted to the construction of combinatorial objects which will be used to prove bijectively the two above results.

\subsection{Affine alternating diagrams}

Unlike~\cite{BBJN, BJN-long, BJN-inv}, to give bijective proofs of the two previous results, we will not use the framework of Coxeter groups but instead translate the correspondence of Proposition~\ref{prop:caracterisation_Atilde} in terms of simple combinatorial objects defined as follows. 

\begin{Definition}
\label{defi:alternating_diagram}
 An {\em affine alternating diagram} of rank $n$ is a poset $D$ with elements labeled by  $\{s_0,s_1,\ldots,s_{n-1}\}$, such that for $i\in \{0,\ldots,n-1\}$ the elements with labels in $\{s_i,s_{i+1}\}$ form an {\em alternating chain} (i.e. with alternating labels) $D_{\{i,i+1\}}$, and the ordering of $D$ is the transitive closure of these chains.
\end{Definition}

We denote the set of affine alternating diagrams of rank $n$ by $\widetilde{\mathcal{D}}(n)$. Note that these were called with a different name in~\cite{BJN-long}, see Remark~\ref{rem:heapoupas} at the end of this section. Equivalent structures were defined by Hagiwara in~\cite{HagiwaraAtilde}.
\smallskip

We represent the Hasse diagram of $D$ by putting all elements labeled $s_i$ in one column. To draw it in a planar manner, one duplicates the set of elements labeled by $s_0$, and uses ones copy for the depiction of the chain $D_{\{0,{1}\}}$ and one copy for $D_{\{{n-1},0\}}$. Two examples are showed in Figure~\ref{fig:AffineAlternatingDiagramSlanted}: the extremities of a dashed line correspond to the same element.  In particular, these diagrams have respectively 31 and 16 elements.

\begin{figure}[!ht]
\begin{center}
 \includegraphics[width=0.5\textwidth]{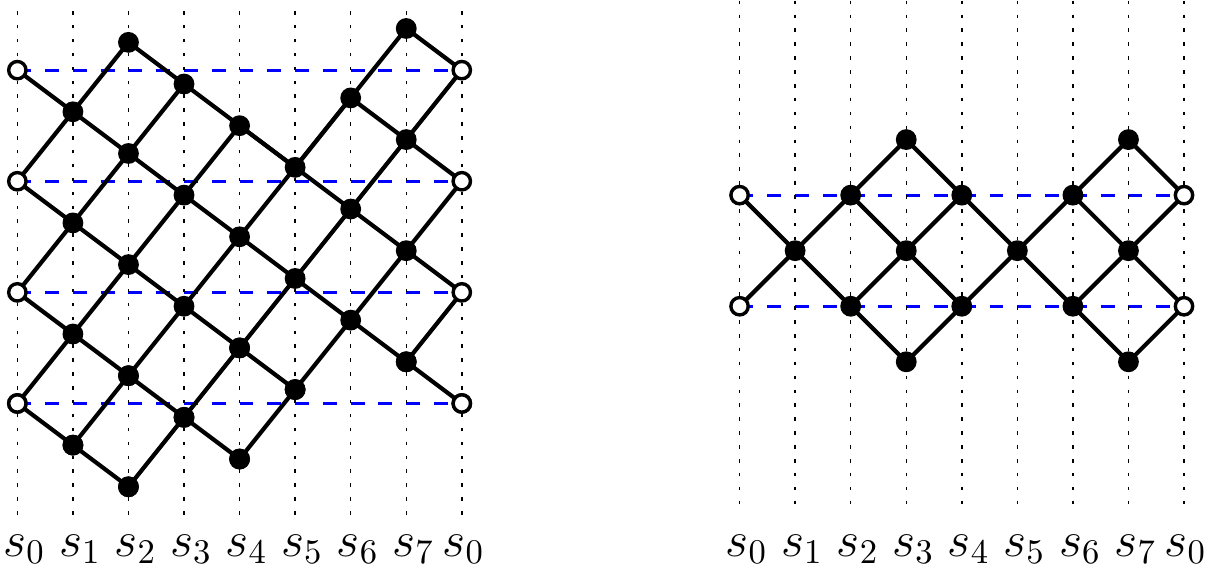}
 \caption{\label{fig:AffineAlternatingDiagramSlanted} Representations of affine  alternating diagrams of $\widetilde{S}_8$.}
 \end{center} 
\end{figure}

By definition, the alternating chains $D_{\{i,{i+1}\}}$ completely determine the poset $D$. In the following proposition we characterize which are the chains arising in an affine alternating diagram. 

\begin{Proposition}
\label{prop:caracterisation_diagrams}
For $i=0,\ldots, n-1$, let $C_{i,i+1}$ be an alternating chain labeled by $s_i$ and $s_{i+1}$. There exists an affine alternating diagrams $D$ such that $D_{i,i+1}=C_{i,i+1}$ for all $i \in \{0,\ldots,n-1\}$ if, and only if the following conditions are satisfied:
\begin{enumerate}
\item  $s_i$ appears as many times in $C_{i,i+1}$ and $C_{i-1,i}$, for all $i \in \{0,\ldots,n-1\}$; 
\item there is no $k \geq 1$ such that for all $i$, $C_{i,i+1}$ is the chain of length $2k$ with labeling $s_is_{i+1}\cdots s_is_{i+1}$ from bottom to top; \label{rectangular_up}
\item there is no $k \geq 1$ such that for all $i$, $C_{i,i+1}$ is the chain of length $2k$ with labeling $s_{i+1}s_i\cdots s_{i+1}s_i$ from bottom to top.\label{rectangular_down}
\end{enumerate}
\end{Proposition}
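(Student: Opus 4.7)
The plan is to handle necessity and sufficiency separately: necessity via a direct cycle construction, sufficiency via induction on the total number of elements.

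\textbf{Necessity.} Condition (1) is immediate since the $s_i$-labeled elements of $D$ are common to $D_{\{i-1,i\}}$ and $D_{\{i,i+1\}}$ with the same induced order. For (2), assume every $C_{i,i+1}$ equals $s_is_{i+1}\cdots s_is_{i+1}$ of length $2k$. Let $a_i$ be the bottom $s_i$-element of $C_{i,i+1}$; by the identification above, $a_i$ is also the bottom $s_i$ of $C_{i-1,i}$, which sits at position $2$ of that chain, directly above its bottom $a_{i-1}$. Hence $a_{i-1}<a_i$ in $D$ for every $i$, producing the cycle $a_0<a_1<\cdots<a_{n-1}<a_0$, contradicting antisymmetry. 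Condition (3) is handled symmetrically, yielding the reversed cycle $a_0>a_1>\cdots>a_{n-1}>a_0$.

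\textbf{Sufficiency.} Condition (1) allows a canonical identification of the $s_i$-elements shared between $C_{i-1,i}$ and $C_{i,i+1}$; the chain cover relations then generate a binary relation whose transitive closure we call $D$. We show $D$ is antisymmetric by induction on $N=\sum_i m_i$, where $m_i$ is the common number of $s_i$-elements. Let $b_i\in\{s_i,s_{i+1}\}$ denote the bottom label of $C_{i,i+1}$, and let $x_i^1$ denote the bottom $s_i$-element. Then $x_i^1$ is a minimum of $D$ if and only if $b_{i-1}=b_i=s_i$. If no such index $i$ exists, the implication $b_i=s_i\Rightarrow b_{i-1}=s_{i-1}$ holds cyclically, forcing either all $b_i=s_i$ or all $b_i=s_{i+1}$. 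In the first case, writing $L_i$ for the length of $C_{i,i+1}$, condition (1) becomes $\lfloor L_{i-1}/2\rfloor=\lceil L_i/2\rceil$; cycling around, the resulting chain of inequalities $\lceil L_i/2\rceil\leq\lceil L_{i-1}/2\rceil$ collapses to equalities, forcing every $L_i$ even and all equal to a common $2k$, the configuration forbidden by (2). The second case contradicts (3) symmetrically.

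Having located a minimum $x_i^1$, removing it preserves (1) (both chain $s_i$-counts drop by one) and flips the bottom labels of $C_{i-1,i}$ and $C_{i,i+1}$ to $s_{i-1}$ and $s_{i+1}$ respectively, which is incompatible with both rectangular shapes, so (2) and (3) persist. The induction hypothesis produces a poset on $D\setminus\{x_i^1\}$, and reattaching $x_i^1$ as a new minimum introduces no cycle. The main obstacle is the minimum-existence step: the cyclic propagation of bottom labels has to be coupled with condition (1) to force the excluded rectangular configurations; the remaining bookkeeping (preservation of the hypotheses under removal and reattachment) is routine.
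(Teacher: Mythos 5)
Your argument is correct in substance and goes considerably further than the paper, which in fact offers no formal proof of this proposition: the text only observes (pointing to Figure~\ref{fig:rectangular_shape}) that the two rectangular families of chains create a directed cycle and hence violate antisymmetry, and leaves both the necessity of (1) and the entire sufficiency direction implicit. Your necessity argument for (2)--(3) is precisely that cycle observation made rigorous, and your inductive peeling of a global minimum supplies the missing sufficiency proof. The key computation is right: if no $x_i^1$ is minimal, the bottom labels propagate cyclically to one of the two uniform patterns, and condition (1) in the form $\lfloor L_{i-1}/2\rfloor=\lceil L_i/2\rceil$ then forces all lengths even and equal, which is exactly an excluded configuration. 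The reattachment step is also sound, since the removed element only acquires relations of the form $x_i^1<y$ and therefore cannot lie on a directed cycle of the generating relation.

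One point needs attention: your minimum-existence step tacitly assumes that every chain $C_{i,i+1}$ is nonempty, so that every bottom label $b_i$ is defined and the implication $b_i=s_i\Rightarrow b_{i-1}=s_{i-1}$ can be propagated all the way around the cycle. This hypothesis can fail: finite alternating diagrams contain no element labelled $s_0$, so $C_{n-1,0}$ and $C_{0,1}$ may be empty or reduced to a single point, and more generally several consecutive labels may be absent. The gap is easy to close --- if some $m_i=0$, start from any $j$ with $m_j>0$ and, as long as $x_j^1$ is not minimal, move to the neighbouring index whose bottom element lies strictly below it; the walk cannot backtrack (going $j\to j+1$ requires $b_j=s_{j+1}$ while returning $j+1\to j$ would require $b_j=s_j$), so it would have to wrap around the whole cycle, which the empty chain forbids, and it therefore stops at a minimum --- but as written your induction does not cover these degenerate inputs. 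A second, purely cosmetic issue: after removing $x_i^1$ from a singleton chain there is no ``flipped bottom label''; the conclusion that (2) and (3) persist still holds, since the excluded configurations require every chain to have length $2k\geq 2$.
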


The set of chains described in \eqref{rectangular_up} and \eqref{rectangular_down} above clearly does not come from an affine alternating diagram. Indeed, the transitive closure of these chains violates the antisymmetry relation, as can be seen in the graphical representation in Figure~\ref{fig:rectangular_shape}: on the left ({\em resp.} right) is depicted an excluded diagram giving a set of chains of type \eqref{rectangular_up} with $k=2$ and $n=4$ ({\em resp.}  \eqref{rectangular_down} with $k=1$ and $n=4$). Such objects will play an important role in Section~\ref{subsec:BackTo321}.
\smallskip

\begin{figure}[!ht]
\begin{center}
 \includegraphics[width=0.35\textwidth]{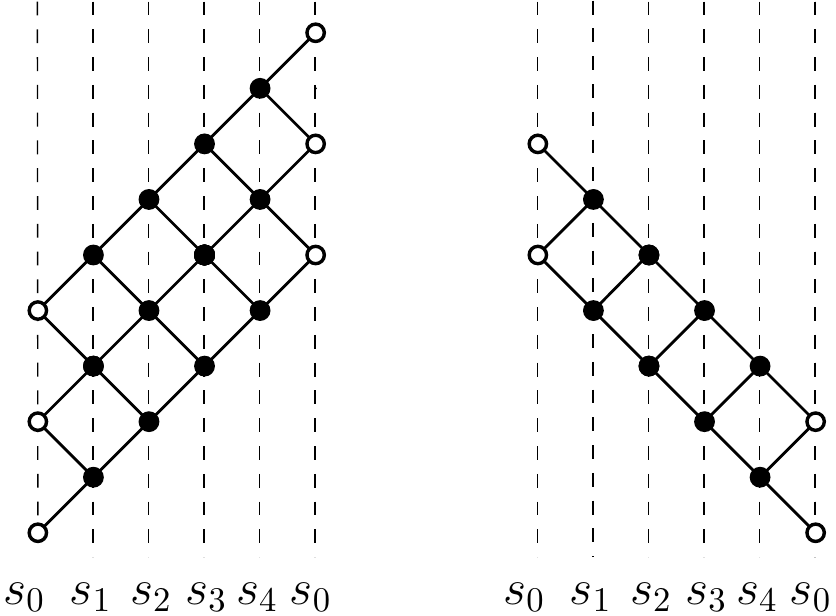}
 \caption{Two excluded diagrams: they do not represent posets as they contain cycles.~\label{fig:rectangular_shape}}
 \end{center} 
\end{figure}

The affine alternating diagrams in $\widetilde{\mathcal{D}}(n)$ containing no element labeled $s_0$ are called {\em finite alternating diagrams}. They form indeed a finite set denoted $\mathcal{D}(n)$ (see for instance~\cite{St3}).

The {\em dual} of an affine alternating diagram is the poset with the inverse order, and where the labels are kept the same. We will say that an affine alternating diagram is \emph{self-dual} if it is isomorphic to its dual.  An example is given in Figure~\ref{fig:AffineAlternatingDiagramSlanted}, right.
\smallskip

We now associate any 321-avoiding affine permutation $\sigma \in \tS^{(321)}_n$ with an affine alternating diagram via the following construction. Pick a reduced decomposition of $\sigma$. By Proposition~\ref{prop:caracterisation_Atilde}, for any $i=0,\ldots,n-1$,  the occurrences of $s_i$ and $s_{i+1}$ in such a decomposition form an alternating subword, which we identify with an alternating chain $C_{i,i+1}$.  By Proposition~\ref{prop:caracterisation_diagrams}, these chains determine a unique affine alternating diagram denoted $\Delta(\sigma)$; indeed, one easily checks that the excluded cases \eqref{rectangular_up} and \eqref{rectangular_down} can never occur. By the general theory of fully commutative elements, this construction does not depend on the choice of the reduced decomposition. 

The following theorem summarizes results of Stembridge~\cite{St3} and Green~\cite{Gre321}.

\begin{Theorem}\label{thm:bijectionSD}
The map $\Delta : \widetilde{S}_n^{(321)} \to \widetilde{\mathcal{D}}(n)$ is a bijection such that $\inv(\sigma)=|\Delta(\sigma)|$. Moreover,
\begin{itemize}
\item[$(i)$]  $\sigma \in S_n^{(321)}$  if, and only if $\Delta(\sigma) \in \mathcal{D}(n)$;
\item[$(ii)$] $\sigma$ is an involution if, and only if $\Delta(\sigma)$ is self-dual.
\end{itemize}
\end{Theorem}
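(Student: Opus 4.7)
The plan is to verify the announced properties of $\Delta$ by combining Proposition~\ref{prop:caracterisation_Atilde} with the standard heap-theoretic picture of fully commutative elements.

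First, I would check that $\Delta$ is well-defined and lands in $\widetilde{\mathcal D}(n)$. Given a reduced decomposition $\mathbf{w}=s_{i_1}\cdots s_{i_\ell}$ of $\sigma\in\widetilde S_n^{(321)}$, Proposition~\ref{prop:caracterisation_Atilde} implies that for each $i$ the subword of $\mathbf{w}$ on the alphabet $\{s_i,s_{i+1}\}$ is an alternating chain $C_{i,i+1}$. Condition (1) of Proposition~\ref{prop:caracterisation_diagrams} is immediate, since $C_{i-1,i}$ and $C_{i,i+1}$ both record the occurrences of $s_i$ in $\mathbf{w}$. To rule out cases (2)--(3), I would observe that the positional order in $\mathbf{w}$ provides a common linear extension of all the chain orderings; hence the chains cannot together force a cyclic relation $s_0\prec s_1\prec\cdots\prec s_{n-1}\prec s_0$ or its reverse, which those configurations would require. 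Independence of $\Delta(\sigma)$ from the chosen reduced decomposition then follows from full commutativity, as any two reduced words of $\sigma$ are related by commutations $s_is_j\mapsto s_js_i$ with $\{i,j\}$ not of the form $\{k,k+1\}\bmod n$; such swaps leave each $C_{i,i+1}$ unchanged.

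For the inverse direction, given $D\in\widetilde{\mathcal D}(n)$ I would pick any linear extension $s_{j_1}\cdots s_{j_\ell}$ of $D$ and verify that the resulting word is reduced. Alternation of each $D_{\{i,i+1\}}$ prevents two elements with equal label from being comparable via a single cover, so no two equal letters occur consecutively. It also forbids a braid-moveable block $s_is_{i+1}s_i$ in three consecutive positions: if two elements $a,c$ labeled $s_i$ were separated in the extension only by a single $s_{i+1}$, no $s_{i-1}$ could lie strictly between them, contradicting the alternation of $C_{i-1,i}$ which forces some $s_{i-1}$ between $a$ and $c$ in $D$. The word is therefore reduced and represents a fully commutative, hence $321$-avoiding, element $\sigma\in\widetilde S_n$. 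One checks immediately that $\Delta(\sigma)=D$ by reading off the chains, and that $\sigma$ does not depend on the choice of linear extension (different extensions differ only by commutations). Injectivity of $\Delta$ is then trivial since $\Delta(\sigma)=\Delta(\sigma')$ forces a common reduced word. The identity $\inv(\sigma)=|\Delta(\sigma)|$ is immediate: by construction $|\Delta(\sigma)|=\ell(\sigma)$, and $\ell(\sigma)=\inv(\sigma)$ by the theorem of Shi recalled after the definition of $\inv$.

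Assertions $(i)$ and $(ii)$ then follow from elementary observations: for $(i)$, $\sigma\in S_n$ iff some reduced word of $\sigma$ avoids $s_0$, iff (by invariance of the letter multiset under commutation) every reduced word does, iff $\Delta(\sigma)$ has no element labeled $s_0$; for $(ii)$, reversing a reduced word of $\sigma$ produces a reduced word of $\sigma^{-1}$ and corresponds at the heap level to inverting the partial order while keeping the labels fixed, so $\sigma=\sigma^{-1}$ is equivalent to $\Delta(\sigma)$ being self-dual. The main obstacle I foresee is the inverse-direction argument that any linear extension of $D$ is reduced: while the exclusion of braid patterns reduces to the alternation check above, it requires a careful interleaving analysis between $C_{i,i+1}$ and its neighbors $C_{i-1,i}$ and $C_{i+1,i+2}$; the exclusion of cases (2)--(3) in the forward direction is conceptually easier but must be matched precisely against the cyclic nature of the $\tA_{n-1}$ Dynkin diagram.
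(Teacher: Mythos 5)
Your proposal is correct and takes essentially the same route as the paper, which does not prove the theorem itself but describes the construction of $\Delta$ and attributes the result to Stembridge and Green; what you supply is exactly the standard heap-theoretic verification from those references (no cover between equal labels, no convex braid chain $s_is_{i+1}s_i$, hence every linear extension is a reduced word of a fully commutative element), together with the routine support and word-reversal arguments for $(i)$ and $(ii)$. The only caveat is the degenerate case $n=2$, where $s_{i-1}=s_{i+1}$ and $m(s_0,s_1)=\infty$, so your braid-exclusion step should be read as vacuous there rather than as stated.
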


There are simple graphical ways to construct the bijection $\Delta$ of the previous theorem and its inverse $\Delta^{-1}$, using the {\em line diagram} of $\sigma$, as shown in Figures~\ref{fig:alternating_diagram_000} and \ref{fig:AffineDiagramTo321}.
 
\begin{figure}[!ht]
\begin{center}
 \includegraphics[width=0.7\textwidth]{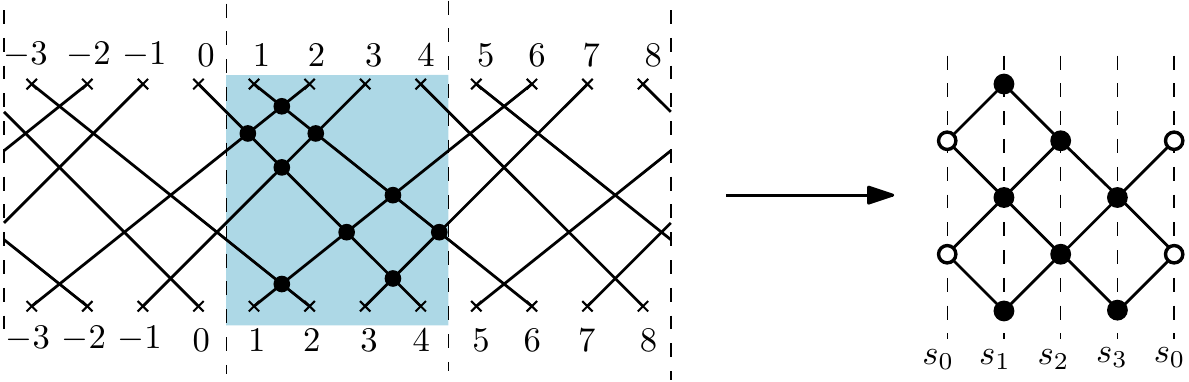}
 \caption{\label{fig:alternating_diagram_000} The image of the affine permutation  $\sigma=[6,-3,-1,8] \in \tS^{(321)}_4$ via $\Delta$.}
 \end{center} 
\end{figure}

\begin{figure}[!ht]
\begin{center}
 \includegraphics[width=0.8\textwidth]{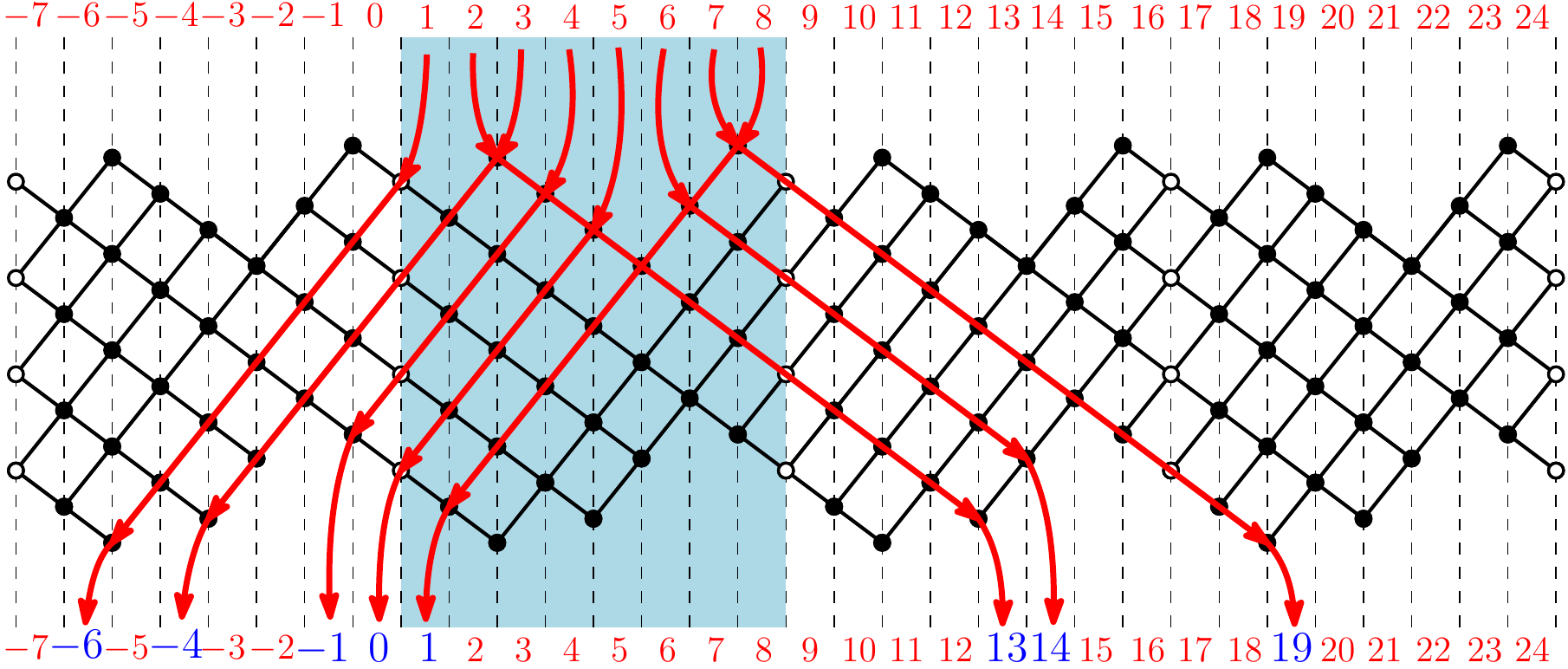}
 \caption{\label{fig:AffineDiagramTo321} The permutation $[-6, 13, -4, -1, 0, 14, 19, 1] \in \tS^{(321)}_8$ is the image via $\Delta^{-1}$ of the diagram of Figure~\ref{fig:AffineAlternatingDiagramSlanted}, left.}
 \end{center} 
\end{figure}

\begin{Remark}\label{rem:heapoupas}
In~\cite{BJN-long}, affine alternating  diagrams are called alternating heaps, in agreement with  the general definition of heaps (see next section). However the alternating condition yields many restrictions and forces these alternating heaps to be very specific. This is why the standard techniques  presented in the next section do not apply directly, and explains our decision to distinguish them by a different name.
\end{Remark}

\section{Heaps of pieces}\label{sec:heaps}
We recall here the theory of heaps of pieces due to Viennot~\cite{Viennot1}. We will need in particular the fundamental enumerative results from Sections~\ref{subsec:enumheaps} and~\ref{subsec:enumcycles} which will be used in Sections~\ref{sec:heaps-monomeres-dimeres} and~\ref{sec:parallelogram}.

\subsection{Definitions}\label{subsec:defiheaps}
Let $\cP$ be a set of \emph{basic pieces} with a symmetric and reflexive binary relation $\cC$, called the {\em concurrency relation}. The pair $(\cP,\cC)$ will be called \emph{model of heaps}.

\begin{Definition}
\label{def:labeled_heap}
 A {\em heap} is a triple $(H,\preceq, \epsilon)$, where $(H,\preceq)$ is a finite poset, and $\epsilon : H \rightarrow \cP$ is a labeling map such that:
\begin{enumerate}
\item if $x,y \in H$ and $\epsilon(x)\cC \epsilon(y)$, then either $x\preceq y$ or $y \preceq x$;
\item the relation $\preceq$ is the transitive closure of the relations from (1).
\end{enumerate}
\end{Definition}

The elements of $H$ are called {\em pieces}. We denote by $|H|$ the number of pieces in the heap $H$. When $x\preceq y$ we will say that the piece $y$ is above the piece $x$. We always consider heaps up to isomorphism, where two heaps $H_1,H_2$ are isomorphic if there is a poset isomorphism $\rho:H_1\rightarrow H_2$ that preserves the labels (i.e. such that $\epsilon_1=\epsilon_2\circ \rho$). The set of all isomorphism classes of heaps with pieces in $\cP$ and concurrency relation $\cC$ is denoted by $\cH(\cP,\cC)$. 

One can define a monoid with generators $\cP$ and relations $pp'=p'p$ whenever $p\not \!\cC \, p'$. This is called a \emph{partially commutative monoid} (or Cartier-Foata monoid, or even trace monoid). The set $\cH(\cP,\cC)$ is then in bijection with this monoid (see \cite{cartierfoata}).\smallskip

Let $H$ be a heap. A piece of $H$ is said to be {\em maximal} (\emph{resp.} {\em minimal}) if it has no piece above (\emph{resp.} below) it.  We denote by $\Max(H)$ and $\Min(H)$ the set of maximal and minimal pieces of $H$ respectively. $H$ is a {\em pyramid} if $\Max(H)$ has just one element. We denote by $\Pi(\cP,\cC)$  the set of  pyramids in $\cH(\cP,\cC)$. A {\em trivial heap}  consists of pieces which are pairwise unrelated by $\cC$. We denote by $\cT(\cP,\cC)$ the set of trivial heaps. 

In this paper we will deal with the case where $\cP$ consists of {\em segments} of the form $p=[a,b]$ with $a,b \in \ns$, $a\leq b$, and two pieces $p,p'$ of $\cP$ satisfy $[a,b]\cC [c,d]$  if  $[a,b]\cap [c,d]\neq\emptyset$. Some heaps of this type are presented in Figure~\ref{fig:example-composition}: the heap $T$ is trivial, $H_1$ is a pyramid. $H_2$ is a heap of {\em monomers} and {\em dimers}, which means that all its pieces are either points $[a]$ or segments of the form $[a,a+1]$. For any segment $p=[a,b]$, we define the {\em length} of $p$ as $\ell(p):=b-a$; more generally the length of a heap of segments is the sum of the lengths of its pieces.

\begin{figure}[!ht]
\begin{center}
 \includegraphics[width=0.8\textwidth]{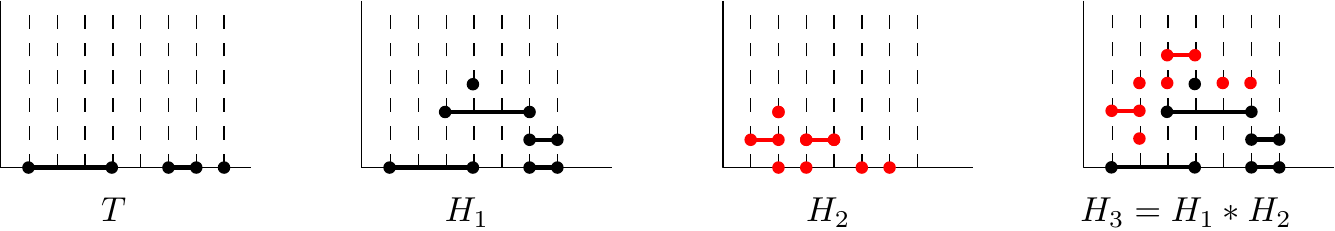}
 \caption{A trivial heap, two heaps of segments, and their composition. \label{fig:example-composition}}
 \end{center} 
\end{figure}
There is a well-known operation of {composition} of heaps, which corresponds to multiplication in the associated partially commutative monoid. Given two heaps $H_1$ and $H_2$, the \emph{composition} $H_1 * H_2$ is the heap that results by ``putting $H_2$ on top of $H_1$'', see Figure~\ref{fig:example-composition}: formally, $H_1 * H_2$ has $H_1\sqcup H_2$ as underlying set, its labeling function $\epsilon$ is defined as $\epsilon_1$ over $H_1$ and as $\epsilon_2$ over $H_2$, and its poset structure is the transitive closure of the relations of $H_1$ and $H_2$ together with $h_1\preceq h_2$ whenever $\epsilon(h_1)\cC\epsilon(h_2)$.

We finally point out that, as said in Remark~\ref{rem:heapoupas}, alternating (affine) diagrams defined in Section~\ref{sec:321} can be seen as heaps of dimers (transform points labeled $s_i$ to dimers $[i,i+1]$)  with some specific restrictions encoding the alternating condition. However the enumeration techniques that we describe in the rest of this section do not work directly on these diagrams, so we will need to transform them beforehand.

\subsection{Enumeration}\label{subsec:enumheaps}

The following fundamental result is due to Viennot~\cite{Viennot1} (see also~\cite[Theorem 2.1]{mbm-viennot}), and  is usually called the \emph{Inversion Lemma}. It allows to enumerate some families of heaps with respect to any weight function on their pieces. Here a \emph{weight }$v$ is a function on $\cP$ with values in a ring of formal power series.
The weight $v(H)$ of a heap $H$ is the product of the weights of all its pieces. Formally, 
\begin{equation}
\label{eq:weight}
v(H)=\prod_{x\in H} v(\epsilon(x)).
\end{equation} 
We always assume that the family of $v(H)$ for $H\in \cH(\cP,\cC)$ is summable.

\begin{Lemma}[Inversion Lemma]
\label{lem:inversion}
Let $(\cP,\cC)$ be a model of heaps and let $\cM\subseteq\cP$. Then the generating function for heaps with all maximal pieces in $\cM$ is given by
\begin{equation}
\label{eq:inversion}
\sum_{\stackrel{H \in \cH(\cP,\cC)}{\Max(H) \subseteq \cM}} v(H)=\displaystyle{\frac{\displaystyle\sum_{T \in \cT(\cP\setminus \cM, \cC)} (-1)^{|T|} v(T)}{\displaystyle\sum_{T\in \cT(\cP,\cC)} (-1)^{|T|}v(T)}}.
\end{equation} 
In particular if $\cM=\cP$, so that we enumerate all heaps, the numerator above is simply $1$.\end{Lemma}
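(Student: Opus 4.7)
The plan is to deduce the Inversion Lemma from a single structural observation combined with the composition operation $*$ on heaps and standard inclusion-exclusion. The structural observation is that for any heap $K$, the set $\Max(K)$ is automatically a trivial heap: if two distinct maximal pieces had concurrent labels, axiom (1) of Definition~\ref{def:labeled_heap} would force them to be comparable, contradicting maximality of one of them. In particular, every subset of $\Max(K)$ is a trivial heap of $\cT(\cP,\cC)$.

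First I would establish a composition bijection: the map $(H',T)\mapsto H'*T$ gives a weight-preserving correspondence between pairs $(H',T)\in \cH(\cP,\cC)\times \cT(\cP,\cC)$ and pairs $(H,S)$ where $H\in\cH(\cP,\cC)$ and $S$ is a trivial sub-heap of $\Max(H)$; the inverse strips the pieces of $S$ off the top of $H$, which is well-defined precisely because $S\subseteq \Max(H)$. Writing $Z:=\sum_H v(H)$ and $A:=\sum_{T\in\cT(\cP,\cC)} (-1)^{|T|}v(T)$, applying this bijection to $Z\cdot A$ yields
\[ Z\cdot A \;=\; \sum_{H\in \cH(\cP,\cC)} v(H) \sum_{S\subseteq \Max(H)} (-1)^{|S|}, \]
where the inner sum vanishes unless $\Max(H)=\emptyset$, that is, unless $H=\emptyset$. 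This establishes the special case $\cM=\cP$ of \eqref{eq:inversion} in the form $Z\cdot A=1$.

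For the general case, I would expand the indicator $[\Max(H)\subseteq \cM]$ by inclusion-exclusion as $\sum_S (-1)^{|S|}$, where $S$ ranges over sub-heaps of $\Max(H)$ whose pieces are all labeled in $\cP\setminus \cM$; each such $S$ is automatically trivial by the structural observation. Swapping the order of summation and reapplying the composition bijection to the inner sum over heaps $H$ with $S\subseteq \Max(H)$ gives
\[ \sum_{H\,:\,\Max(H)\subseteq \cM} v(H) \;=\; \Bigl(\sum_{T\in \cT(\cP\setminus \cM,\cC)} (-1)^{|T|} v(T)\Bigr)\cdot Z \;=\; \frac{\sum_{T\in \cT(\cP\setminus\cM,\cC)}(-1)^{|T|}v(T)}{A}, \]
using $Z=1/A$ in the last step. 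The main obstacle will be verifying the composition bijection carefully: one must check that the pieces of $T$ do become maximal in $H'*T$, and conversely that stripping a trivial sub-heap of $\Max(H)$ produces a valid heap that recomposes to $H$, all while tracking the multiplicative weight \eqref{eq:weight} and respecting the summability assumption that legitimises these manipulations of formal series.
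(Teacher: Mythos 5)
Your proof is correct, and while it rests on the same Cartier--Foata mechanism as the paper's argument (compose with a trivial heap, let a signed sum over subsets cancel everything but the desired terms), the decomposition is genuinely different. The paper uses $\phi(T,H)=T*H$, putting the trivial heap at the \emph{bottom} so its pieces become minima, and runs a single double count of $\cT\times\cH_{\cM}$; the cost is the case analysis of the inner sum $\sum_{T\subseteq\Min(H_0),\,H_0\setminus T\in\cH_\cM}(-1)^{|T|}$, which is shown to survive exactly when $H_0$ is itself a trivial heap on $\cP\setminus\cM$, producing the numerator directly. You compose on \emph{top}, so the added pieces become maxima --- which meshes naturally with the condition $\Max(H)\subseteq\cM$ --- and you split the argument in two: first $Z\cdot A=1$ by the bare binomial cancellation over subsets of $\Max(H)$, then the numerator by expanding the indicator $[\Max(H)\subseteq\cM]$ over the badly-labelled maxima and reapplying the same bijection. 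Your route makes every cancellation elementary and eliminates the paper's case analysis; the paper's bottom-composition has the structural advantage that the very same map $\phi(T,H)=T*H$ and the ``which minima may be removed'' analysis are reused later (Lemma~\ref{lem:imagephi} and the proof of Theorem~\ref{thm:enumPPP}), where the family being enumerated is no longer cut out by a condition on maxima. The one step you should spell out is that stripping $S\subseteq\Max(H)$ yields a genuine heap recomposing to $H$: a covering chain in $H$ between two pieces of $H\setminus S$ cannot pass through a maximal piece, so the induced order on $H\setminus S$ is still the transitive closure of its concurrency relations. You flag this yourself and it is routine.
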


The usefulness of this lemma is that trivial heaps form a simple family for which one can obtain formulas. Let us recall Viennot's proof, since we will use the same idea in Section~\ref{sec:parallelogram} in order to count another family of heaps. 

\begin{proof}
In this proof, we note $\cH=\cH(\cP,\cC)$ and $\cT=\cT(\cP,\cC)$. Define $\phi:\cT \times \cH \to  \cH$ by $\phi(T,H):=T*H$. In words, $\phi$ adds new minimal elements at the bottom of a heap. The idea is to use $\phi$ for a double counting of $\cT\times \cH$. Given any family of heaps $\H'\subseteq \H$, we have
\begin{equation}
\label{eq:doublecounting}
\left(\sum_{T\in \cT} (-1)^{|T|}v(T)\right)\left(\sum_{H\in \H'} v(H)\right)=
\sum_{H_0\in \phi(\cT\times \H')}v(H_0)\left(\sum_{T\in\cT, H\in\H':\,\phi(T,H)=H_0}(-1)^{|T|}\right).
\end{equation}
Consider $T,H$ as in the last inner sum. Clearly the pieces of $T$ become minima in $H_0$ via $\phi$, and  $H$ is obtained from $H_0$ by removing these minima. We write $H=H_0\setminus T$ in this case. Therefore the right-hand side of~\eqref{eq:doublecounting} can be rewritten as
\begin{equation}
\label{eq:secondterm}
\sum_{H_0\in \phi( \cT\times\H')}v(H_0)\left(\sum_{T\subseteq\Min(H_0):\, H_0\setminus T\in \H'}(-1)^{|T|}\right).
\end{equation}

We now assume $\H'$ is the set $\H_\cM$ of heaps whose maximal pieces belong to $\cM$. Comparing~\eqref{eq:inversion} and~\eqref{eq:doublecounting}, we have to prove that~\eqref{eq:secondterm} is precisely the numerator in~\eqref{eq:inversion}.  

First we remark that $H_0\in\phi(\cT \times \H_\cM)$ if, and only if $\{x\in \Max(H_0):\, \epsilon(x)\notin\cM\}\subseteq \Min(H_0)$. In this case, for a given $T\subseteq\Min(H_0)$, we have $H_0\setminus T\in \H_\cM$ if, and only if $\{x\in \Max(H_0):\, \epsilon(x)\notin\cM\}\subseteq T$. The inner sum in \eqref{eq:secondterm} vanishes by inclusion-exclusion, except when $\{x\in \Max(H_0):\, \epsilon(x)\notin\cM\}=\Min(H_0)$, under which condition the sum is equal to $(-1)^{|\Min(H_0)|}$. This case is equivalent to $H_0\in\cT(\cP\setminus \cM, \cC)$, so that $H_0=\Min(H_0)$ and \eqref{eq:secondterm} is indeed the numerator in \eqref{eq:inversion} as wanted.
\end{proof}

We can use this result to enumerate the set of pyramids.

\begin{Corollary}[Enumeration of pyramids]
\label{Cor:enumpyramids} 
Let $(\cP,\cC)$ be a model of heaps. The generating function of pyramids is given by
\begin{equation}
\label{eq:enumpyramids}
\sum_{H \in \Pi(\cP,\cC)} v(H)=\displaystyle{-\frac{\displaystyle \sum_{T \in \cT(\cP,\cC)} (-1)^{|T|} |T|v(T)}{\displaystyle\sum_{T\in \cT(\cP,\cC)} (-1)^{|T|}v(T)}}.
\end{equation}
\end{Corollary}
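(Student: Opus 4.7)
The plan is to deduce the corollary from Lemma~\ref{lem:inversion} by choosing $\cM=\{p\}$ to be a singleton and then summing over all $p\in\cP$. The first observation is that for any $p\in\cP$, the reflexivity of $\cC$ forces any two pieces of a heap that are both labeled $p$ to be comparable. Consequently, a non-empty heap $H$ with $\Max(H)\subseteq\{p\}$ must have a unique maximum (labeled $p$), so it is a pyramid with top labeled $p$; and conversely, every such pyramid belongs to this family. Hence
\[
1+\sum_{\substack{H\in\Pi(\cP,\cC) \\ \text{top labeled }p}} v(H)
= \sum_{\substack{H\in \cH(\cP,\cC) \\ \Max(H)\subseteq\{p\}}} v(H),
\]
where the ``$1$'' accounts for the empty heap.

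Next, I would apply Lemma~\ref{lem:inversion} to the right-hand side. The trivial heaps $T\in\cT(\cP\setminus\{p\},\cC)$ are precisely those trivial heaps in $\cT(\cP,\cC)$ that do not contain a piece labeled $p$ (again using the fact that labels are non-repeating in a trivial heap thanks to $p\,\cC\,p$). Writing $D=\sum_{T\in\cT(\cP,\cC)}(-1)^{|T|}v(T)$, I therefore get
\[
\sum_{T\in\cT(\cP\setminus\{p\},\cC)}(-1)^{|T|}v(T)
= D-\sum_{\substack{T\in\cT(\cP,\cC) \\ p\in T}}(-1)^{|T|}v(T),
\]
which, combined with Lemma~\ref{lem:inversion} and the previous display, yields
\[
\sum_{\substack{H\in\Pi(\cP,\cC) \\ \text{top labeled }p}} v(H)
= -\,\frac{\displaystyle\sum_{\substack{T\in\cT(\cP,\cC) \\ p\in T}}(-1)^{|T|}v(T)}{D}.
\]

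Finally, I would sum over $p\in\cP$, partition the pairs $(p,T)$ with $p\in T$ by $T$, and observe that each $T$ contributes $|T|$ such pairs:
\[
\sum_{p\in\cP}\sum_{\substack{T\in\cT(\cP,\cC) \\ p\in T}}(-1)^{|T|}v(T)
=\sum_{T\in\cT(\cP,\cC)}(-1)^{|T|}\,|T|\,v(T).
\]
Substituting this into the expression above gives exactly formula~\eqref{eq:enumpyramids}. I do not foresee a significant obstacle: once the singleton choice $\cM=\{p\}$ is made, the only nontrivial point is the reflexivity argument that identifies ``heaps with $\Max\subseteq\{p\}$'' with pyramids of top $p$ (plus the empty heap); the rest is bookkeeping via the double-counting identity $\sum_p\sum_{T\ni p}=\sum_T|T|$, which conveniently avoids assuming finiteness of $\cP$.
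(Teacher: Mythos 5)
Your proposal is correct and follows essentially the same route as the paper: apply the Inversion Lemma with $\cM=\{M\}$ a singleton, identify the non-empty heaps with $\Max(H)\subseteq\{M\}$ as the pyramids with top labeled $M$, and sum over $M$ using $\sum_{M}\sum_{T\ni M}=\sum_{T}|T|$. Your explicit appeal to the reflexivity of $\cC$ to justify the uniqueness of the maximum is a point the paper leaves implicit, but the argument is identical.
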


\begin{proof}
Let $\Pi=\Pi(\cP,\cC)$ and $\cT=\cT(\cP,\cC)$. $\Pi$ is the disjoint union  over all $M\in \cP$ of heaps such that $\Max(H)=\{M\}$, so we get 
\[
\sum_{H \in \Pi}v(H)=\sum_{M\in\cP}\left(\sum_{H:\, \Max(H)\subseteq \{M\}}v(H)-1\right),
\]
in which the term $-1$ removes the empty heap corresponding to the case $\Max(H)=\emptyset$ . By Lemma~\ref{lem:inversion}, we get 
\[
\sum_{H:\, \Max(H)\subseteq \{M\}}v(H)-1
=-\frac{\displaystyle\sum_{T\in \cT:\, M\in T} (-1)^{|T|}v(T)}{\displaystyle\sum_{T\in \cT} (-1)^{|T|}v(T)}.
\]
Summing over all $M$, and exchanging the summations gives~\eqref{eq:enumpyramids}.
\end{proof}


%
%

%

\subsection{Heaps of cycles}\label{subsec:enumcycles}

Let $G =(V,E,v_G)$ be a directed graph with a weight function $v_G$ on $E$. The weight $v_G(\gamma)$ of a path $\gamma$ is the product of the weights of its arcs. A cycle of $G$ is a path ending at its starting point, up to a cyclic permutation. A path is {\em self-avoiding} if it does not visit the same vertex twice. A (non-empty) self-avoiding cycle is called an \emph{elementary cycle}. Two paths are disjoint if their vertex sets are disjoint, otherwise they are said to intersect.
\smallskip

Now consider the following model of heaps $\cH(G)$ attached to $G$: the basic pieces are the elementary cycles in $G$, and two such cycles $\gamma_1$ and $\gamma_2$ are in concurrence if they intersect. The weight (still denoted $v_G$) of an elementary cycle is the product of the edges it contains.

\begin{Theorem}[\cite{Viennot1}]\label{thm:bijHeapsCycles}
Let $u,v$ be two vertices in $G$. There is a weight-preserving bijection $\psi$ between 
\begin{itemize}
\item[(i)] the set of paths from $u$ to $v$ in $G$,  and 
\item[(ii)] the set of pairs $(\eta,H)$, where $\eta$ is a self-avoiding path from $u$ to $v$, and $H$ is a heap in $\cH(G)$ such that any maximal piece of $H$ intersects $\eta$.
\end{itemize}
\end{Theorem}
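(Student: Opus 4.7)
The plan is to construct the bijection $\psi$ explicitly by a loop-extraction procedure on paths, and to describe its inverse by a cycle-insertion procedure along a self-avoiding spine. Weight preservation will follow transparently from both constructions, since the arcs of the path $\omega$ are partitioned between those of $\eta$ and those of the extracted cycles, so that $v_G(\omega) = v_G(\eta)\,v_G(H)$.

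For the forward map, I will read $\omega = u = x_0 \to x_1 \to \cdots \to x_N = v$ from left to right, maintaining a current self-avoiding path $\pi$ (initially $u$) together with an ordered sequence $\gamma_1, \gamma_2, \ldots, \gamma_k$ of extracted elementary cycles. At step $i$, if $x_i$ is not already a vertex of $\pi$, I extend $\pi$ by the arc $x_{i-1}\to x_i$; otherwise $x_i$ coincides with some vertex $w$ of $\pi$, and I extract the elementary cycle $\gamma$ formed by $w$, the portion of $\pi$ after $w$, and the arc $x_{i-1}\to x_i$, truncate $\pi$ back to $w$, and append $\gamma$ to the extracted sequence. Setting $\eta := \pi$ at the end, I build the heap $H$ on $\{\gamma_1,\ldots,\gamma_k\}$ by declaring $\gamma_i \preceq \gamma_j$ whenever $i \leq j$ and $\gamma_i, \gamma_j$ share a vertex, and then taking the transitive closure; this satisfies the axioms of Definition~\ref{def:labeled_heap}. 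The main structural claim is that every $\gamma_i \in \Max(H)$ meets $\eta$: if no later $\gamma_j$ shares a vertex with $\gamma_i$, then the base vertex $w$ of $\gamma_i$, which lies in $\pi$ just after the extraction of $\gamma_i$, remains in $\pi$ forever after, because a vertex can only leave $\pi$ through a later extraction passing through it; hence $w \in \eta$.

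For the inverse map, given $(\eta, H)$ as in $(ii)$, I will walk along $\eta$ from $u$ to $v$ while inserting the cycles of $H$ at canonical anchor vertices. For each $\gamma \in H$, the anchor $a(\gamma)$ on $\eta$ is defined so that if $\gamma$ is maximal then $a(\gamma) \in \gamma \cap \eta$, and otherwise $a(\gamma)$ is inherited from a maximal piece $\gamma' \succeq \gamma$ during the unfolding of $\gamma'$ at $a(\gamma')$. Walking along $\eta$, I will then insert at each vertex $w$ the closed walks associated with the cycles anchored at $w$, traversed in the order dictated by the heap: the lowest concurrent cycle at $w$ is entered first, and whenever a sub-traversal revisits $w$, the next cycle at $w$ in heap order is started. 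Proceeding by induction on $|H|$, with the base case $H=\emptyset$ giving $\omega=\eta$, this produces a well-defined path $\omega$ from $u$ to $v$ in $G$.

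The main obstacle will be to show that these two maps are mutually inverse. In the forward direction, the order of extraction of two cycles sharing a vertex is dictated by the temporal order in which $\omega$ closes them; in the inverse direction it must be recovered solely from the heap structure. The key observation is that among cycles $\gamma_i \preceq \gamma_j$ sharing a vertex $w$, the upper cycle $\gamma_j$ is extracted later by the forward algorithm, which exactly matches the inverse rule that cycles higher in the heap at $w$ are inserted later during the traversal at $w$. An induction on $|H|$, removing in the forward direction the last extracted cycle (which is a maximum of $H$) and in the inverse direction the same cycle (identified as a maximum whose anchor is the last such insertion point along $\eta$), will establish that the two maps are inverse to each other. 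Combined with the weight identity noted at the outset, this yields Theorem~\ref{thm:bijHeapsCycles}.
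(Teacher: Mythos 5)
Your forward map is exactly the construction the paper records (and attributes to Viennot): process the path arc by arc, maintain the current self-avoiding prefix, and each time the prefix would self-intersect extract an elementary cycle and stack it on top of the heap built so far; your verification that maximal pieces of $H$ meet $\eta$ and your sketch of the inverse insertion procedure supply details the paper omits by citing \cite{Viennot1}. The proposal is correct and takes essentially the same approach as the paper.
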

The bijection $\xi\mapsto (\eta,H)$ is obtained recursively on the length of the path as follows: if the path $\xi$ is a single vertex $u(=v)$, then $\eta$ is this trivial path and $H$ is empty. Now assume that the last arc of $\xi$ is $(v',v)$ and let $\xi'$ be the path from $u$ to $v'$ obtained by removing this arc. By induction, the bijection associates to $\xi'$ a pair $(\eta',H')$. Let $\eta_0$ be the concatenation of $\eta'$ and $(v',v)$. If $\eta_0$ is self-avoiding, define $\eta=\eta_0$ and $H=H'$. Otherwise,  $\eta_0$ decomposes uniquely into a self-avoiding path from $u$ to $v$, defined as $\eta$, and an elementary cycle $\gamma$ intersecting $\eta$ only in $v$. Define then $H=H'*\gamma$.

\begin{Example}
Consider the graph $G$ depicted in Figure~\ref{fig:Graphe_Cycle}, with all edge weights equal to $1$. There are four different cycles in $G$, precisely $\gamma_1=(BFCG)$, $\gamma_2=(BFA)$, $\gamma_3=(BCG)$, and $\gamma_4=(CED)$. The image of the path $\xi=ABFCGBFABCEDCE$ from $A$ to $E$ via the bijection of Theorem~\ref{thm:bijHeapsCycles} is $(\eta, H)$, where $\eta$ is the self-avoiding path $ABCE$, and the heap $H$ is obtained by the composition of $\gamma_1$ with $\gamma_2$, and $\gamma_4$.
\begin{figure}[!ht]
\begin{center}
 \includegraphics[width=0.85\textwidth]{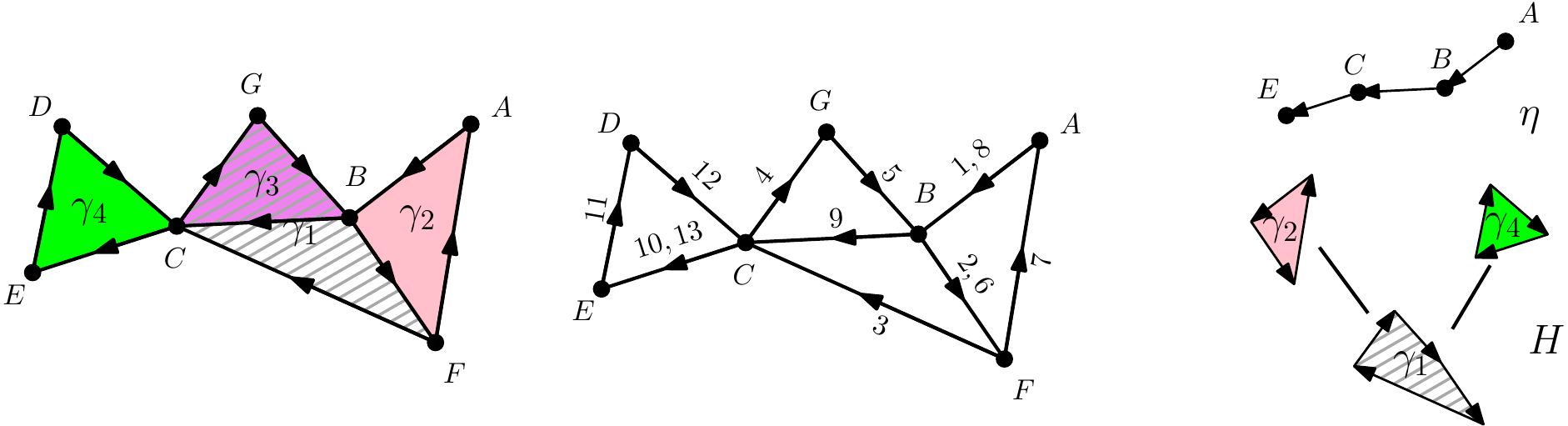}
 \caption{A directed graph $G$ (left), a path in $G$, represented by the labeling of its steps (center), and its image via $\psi$ (right).
 \label{fig:Graphe_Cycle}}
 \end{center} 
\end{figure}
\end{Example}

When $u=v$ in Theorem~\ref{thm:bijHeapsCycles}, then $\eta$ is necessarily the empty path $u$. This implies that $H$ can have only one maximal piece.

\begin{Corollary}\label{Cor:bijCyclesPyramids}
Let $u$ be a vertex in $G$. Then there is a weight-preserving bijection between paths from $u$ to itself and pyramids whose maximal piece contains the vertex $u$. 
\end{Corollary}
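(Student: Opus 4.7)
The plan is to deduce the corollary directly from Theorem~\ref{thm:bijHeapsCycles} applied with the two endpoints equal. First I would argue that the only self-avoiding path from $u$ to $u$ in $G$ is the trivial path of length zero consisting of the single vertex $u$: indeed, by the definition recalled in the text, a self-avoiding path does not visit the same vertex twice, so any path of positive length from $u$ to $u$ would visit $u$ both at the start and at the end, violating self-avoidance. Thus, specializing Theorem~\ref{thm:bijHeapsCycles} to $v=u$, every path from $u$ to $u$ corresponds bijectively to a pair $(\eta,H)$ in which $\eta$ is necessarily the trivial path at $u$, and $H\in\cH(G)$ is a heap whose maximal pieces all intersect $\eta$, i.e.\ all contain the vertex $u$.

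Next I would show that such a heap $H$ has at most one maximal piece. Any two maximal pieces $\gamma_1,\gamma_2$ of $H$ both contain $u$, hence their vertex sets intersect, so $\gamma_1\,\cC\,\gamma_2$ by the definition of the concurrency relation for $\cH(G)$. By condition (1) of Definition~\ref{def:labeled_heap}, they must be comparable in $H$, contradicting maximality unless $\gamma_1=\gamma_2$. So $H$ is either the empty heap (corresponding to the trivial path at $u$) or a pyramid whose unique maximal piece contains $u$.

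Combining these two observations, the map $\psi$ of Theorem~\ref{thm:bijHeapsCycles} restricts to a weight-preserving bijection between closed paths from $u$ to $u$ and pyramids whose maximal piece contains $u$ (with the empty heap matched to the trivial path if one wishes to include it). Weight preservation is inherited from Theorem~\ref{thm:bijHeapsCycles}, since $v_G(\xi)=v_G(\eta)\cdot v_G(H)$ and here $v_G(\eta)=1$.

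The argument is essentially immediate once the observation on self-avoiding loops is made; the only subtle point is verifying that sharing the single vertex $u$ forces any two maximal pieces to be $\cC$-related and hence comparable, which uses the definition of $\cC$ for heaps of cycles together with clause~(1) of Definition~\ref{def:labeled_heap}. There is no real obstacle beyond this bookkeeping.
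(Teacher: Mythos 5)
Your argument is correct and follows exactly the route the paper takes: specialize Theorem~\ref{thm:bijHeapsCycles} to $u=v$, observe that the only self-avoiding closed path is the trivial one, and deduce that all maximal pieces contain $u$ and hence coincide. The paper states this in two sentences without spelling out the concurrency/comparability step; your write-up just makes that bookkeeping (and the empty-heap edge case) explicit.
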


We will apply this result in Section~\ref{sec:heaps-monomeres-dimeres} to a graph in which the closed paths encode alternating diagrams.

\section{Heaps of monomers and dimers}\label{sec:heaps-monomeres-dimeres}

In this section, we give our first bijective approach regarding the enumeration of affine alternating diagrams. As will be shown, the case of finite alternating diagrams can be derived from the analysis of the affine case, therefore we will focus on the latter. 

The strategy is as follows: we will first translate bijectively  affine alternating diagrams in terms of a set $\Cylset^*$ of paths on a linear graph. As will be explained, this is a reformulation of a result in~\cite{BJN-long}. We will then be able to use Viennot's Theorem~\ref{thm:bijHeapsCycles} (actually Corollary~\ref{Cor:bijCyclesPyramids}) to translate the latter paths in terms of marked pyramids of monomers and dimers. By the Inversion Lemma, our enumeration problem will finally boil down to finding the generating functions for trivial heaps of monomers and dimers satisfying some specific conditions. 

The counterpart for involutions  will also be treated by this approach, as $321$-avoiding  affine involutions correspond to an explicit subset of paths in $\Cylset^*$, see~\cite{BJN-inv}. 

\subsection{From affine alternating diagrams to marked pyramids}

In~\cite{BJN-long}, affine alternating diagrams are put into correspondence with a set $\Cylset^*$ of lattice walks. We describe this set here in a different, though equivalent manner, keeping the same notation for simplicity. Consider the infinite graph $G$ depicted in  Figure~\ref{fig:Graphe_Motzbic}: vertices are labeled by nonnegative integers and edges are either loops labeled $L$ or $R$ (except at vertex $0$, where the only label for a loop is $L$), or directed edges $i\to i+1,\,i+1\to i$ for $i\geq0$. 

\begin{Definition}
We denote by $\Cylset^*$ the set of paths on $G$ which have the same starting and ending point. We also let $\Cylset_n^*$ be the set of paths of  $\Cylset^*$ of length $n$. For $\omega=j_0\to j_1\to \dots\to j_n=j_0 \in \Cylset^*_n$, we define ${\area}(\omega)=\sum_{i=0}^{n-1} j_i$.
\end{Definition}

Note that if we set $\area(e)=i$ for an edge starting at vertex $i$, then we have ${\area}(\omega)=\sum_{i=0}^{n-1} \area(j_i\to j_{i+1})$. It is then natural to define the weight $v_G$ on edges by $v_G(e)=xq^{\area(e)}$.

\begin{figure}[!ht]
\begin{center}
\includegraphics[width=8 cm]{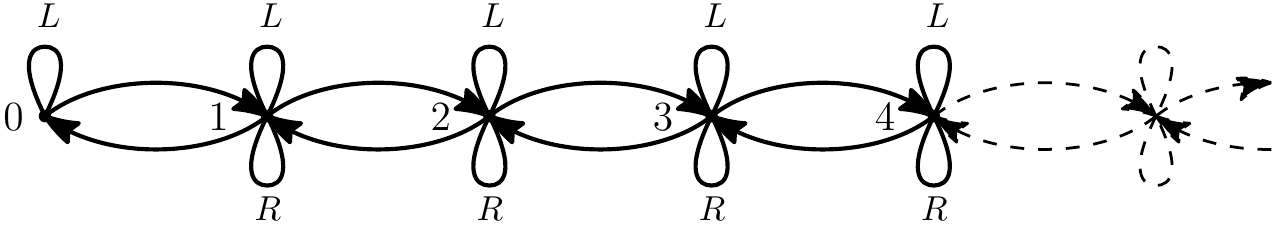}
\caption{Graph $G$ encoding walks in $\Cylset^*$.}
\label{fig:Graphe_Motzbic}  
\end{center}
\end{figure}

Now we associate any affine alternating diagram with a path in $\Cylset^*$. The correspondence goes as follows (see Figure~\ref{fig:walksPyramids}, left and middle, for an illustration of this bijection). For $n\geq2$, pick an affine alternating diagram $D \in \widetilde{\mathcal{D}}(n)$, and denote by $D_i$ the chain made of the elements labeled $s_i$. Since for any $i$, the chain $D_{i,i+1}$ is alternating, we have $|D_i|-|D_{i+1}| \in \{-1,0,1\}$. If $|D_i|=|D_{i+1}|>0$ the chain $D_{i,i+1}$ can be of two types: either $s_is_{i+1}\ldots s_is_{i+1}$ (called {\em type $R$}), or $s_{i+1}s_i\ldots s_{i+1}s_i$ (called {\em type $L$}). 

We then define $\varphi(D)$ as the path  $$|D_0| \rightarrow |D_1| \rightarrow |D_2| \rightarrow \cdots \rightarrow |D_n|=|D_0|$$ on the graph $G$, where if $|D_i|=|D_{i+1}|>0$, the loop $|D_i|\rightarrow |D_{i+1}|$ is the one with label the type of the chain $D_{i,i+1}$.

We let $\mathcal{E}_n$ be the subset of $\Cylset_n^*$ made of paths remaining at a fixed vertex $i>0$, and consisting of $n$ loops with identical label $L$, or $R$. The following result is a reformulation of~\cite[Theorem~2.2]{BJN-long} and \cite[Proposition~3.2]{BJN-inv}.

\begin{Theorem}\label{theorem:walksonG}
For $n\geq2$, the map $\varphi : \widetilde{\mathcal{D}}(n) \to \Cylset_n^*\setminus \mathcal{E}_n$ is a bijection such that $|D|=\area(\varphi(D))$. Moreover,
\begin{enumerate}
\item[$(i)$] $D$ is a finite alternating diagram if, and only if $\varphi(D)$ starts at vertex 0;
\item[$(ii)$] $D$ is self-dual if, and only if the only possible loops in the path $\varphi(D)$ are at vertex 0.
\end{enumerate}
\end{Theorem}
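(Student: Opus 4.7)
The plan is to verify directly that $\varphi$ is well-defined, lands in $\Cylset_n^*\setminus\mathcal{E}_n$, admits an explicit inverse obtained by reading off the chains $D_{i,i+1}$ from the consecutive transitions of the walk, and respects all the claimed compatibility properties.

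First I would show that $\varphi(D)$ is a closed walk on $G$ of length $n$. Since $D_{i,i+1}$ is an alternating chain with $|D_i|$ letters $s_i$ and $|D_{i+1}|$ letters $s_{i+1}$, we have $\bigl||D_i|-|D_{i+1}|\bigr|\leq 1$, so consecutive differences along $(|D_0|,\ldots,|D_n|)$ lie in $\{-1,0,1\}$ and cyclic indexing forces $|D_n|=|D_0|$. When $|D_i|=|D_{i+1}|>0$ the chain $D_{i,i+1}$ has well-defined type $L$ or $R$, which specifies the loop; when $|D_i|=|D_{i+1}|=0$ the chain is empty and the unique loop $L$ available at vertex~$0$ is used.

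Second I would construct $\varphi^{-1}$. For $\omega\in\Cylset_n^*\setminus\mathcal{E}_n$ with vertex sequence $(j_0,\ldots,j_n)$, reconstruct each chain $C_{i,i+1}$ as the unique alternating chain with $j_i$ letters $s_i$ and $j_{i+1}$ letters $s_{i+1}$: when $|j_i-j_{i+1}|=1$ the extremities force the type, when $j_i=j_{i+1}>0$ the loop label at step~$i$ prescribes it, and when $j_i=j_{i+1}=0$ the chain is empty. By Proposition~\ref{prop:caracterisation_diagrams}, this family of chains comes from a (unique) affine alternating diagram exactly when the forbidden configurations in items (2) and (3) of that proposition do not occur; but those forbidden configurations encode precisely walks remaining at a fixed positive vertex with $n$ identical loop labels, that is, walks in $\mathcal{E}_n$. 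The two maps are evidently mutually inverse. The identity $|D|=\area(\varphi(D))$ is then immediate since $|D|=\sum_{i=0}^{n-1}|D_i|=\sum_{i=0}^{n-1}j_i$, and part $(i)$ is equally transparent: $D$ contains no element labeled $s_0$ iff $|D_0|=0$ iff $\varphi(D)$ starts (and ends) at vertex~$0$.

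The main obstacle is part $(ii)$. Any label-preserving anti-isomorphism $\rho:D\to D^{op}$ must stabilize each monochromatic chain $D_i$ set-wise and reverse its (total) order. Its restriction to any two-color chain $D_{i,i+1}$ is therefore an order-reversing, label-preserving bijection of $D_{i,i+1}$, and such a map exists if and only if the bottom-to-top label sequence of $D_{i,i+1}$ is palindromic, which happens exactly when $D_{i,i+1}$ is empty or $|D_i|\neq|D_{i+1}|$. Conversely, when every $D_{i,i+1}$ is palindromic, reversing each $D_i$ individually is compatible with the reversal of each $D_{i,i+1}$ and therefore extends to a well-defined anti-automorphism of $D$ by taking the transitive closure. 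Hence $D$ is self-dual if and only if no $D_{i,i+1}$ satisfies $|D_i|=|D_{i+1}|>0$, which under $\varphi$ translates exactly to the absence of loops at positive vertices.
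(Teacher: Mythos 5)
Your argument is essentially correct, but note that the paper does not actually prove Theorem~\ref{theorem:walksonG}: it declares it to be a reformulation of~\cite[Theorem~2.2]{BJN-long} and~\cite[Proposition~3.2]{BJN-inv} and moves on. So your direct verification supplies a proof where the paper only supplies a citation, and the route you take --- reading off $\varphi^{-1}$ from the vertex sequence and loop labels, invoking Proposition~\ref{prop:caracterisation_diagrams} to certify that the reconstructed chains assemble into a poset, and characterizing self-duality via palindromicity of the two-colour chains $D_{\{i,i+1\}}$ --- is the natural one and is sound. In particular your treatment of $(ii)$ is complete: a label-preserving anti-automorphism must restrict to the unique flip of each finite chain $D_{\{i,i+1\}}$, which is label-preserving exactly when the alternating word is a palindrome, i.e.\ empty or of odd length; and conversely the simultaneous flips glue because the order on $D_i$ induced by $D_{\{i-1,i\}}$ and by $D_{\{i,i+1\}}$ are both restrictions of $\preceq_D$ and hence agree.

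One boundary case deserves a sentence: for $n=2$ the two chains $C_{0,1}$ and $C_{1,2}$ are supported on the \emph{same} label set $\{s_0,s_1\}$, so the forbidden configurations of Proposition~\ref{prop:caracterisation_diagrams}(2)--(3) cannot literally occur, and your identification ``forbidden configurations $=\mathcal{E}_n$'' does not apply verbatim. The walks in $\mathcal{E}_2$ are still excluded from the image, but for a different reason: a single alternating chain of even positive length read with respect to the pair $(s_0,s_1)$ and with respect to $(s_1,s_2)=(s_1,s_0)$ necessarily has opposite types, so $\varphi(D)$ can never carry two identical loop labels at a positive vertex. Your inverse construction likewise needs this remark, since for a walk in $\mathcal{E}_2$ the two prescribed chains would be inconsistent rather than cycle-creating. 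This is a degenerate case rather than a gap in the main argument, but since the theorem is stated for all $n\geq 2$ it should be addressed explicitly.
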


We can now connect affine alternating diagrams with a particular family of heaps by using Theorem~\ref{thm:bijHeapsCycles} and Corollary~\ref{Cor:bijCyclesPyramids} in Section~\ref{subsec:enumcycles}. 

Let $\cH(\cP_{md},\cC)$ be the model of heaps where the basic pieces are monomers $[i]$ with two possible labels $L$ and $R$, and dimers $[i,i+1]$, for nonnegative integers $i$. Let $\cH(\cP_{md}^*,\cC)$ be the same model but where the monomer $[0]$ occurs only with label $L$. Denote by $\Pi_{md}$ and $\Pi_{md}^*$ the sets of pyramids corresponding to  these two models. Finally, let $\Pi_{md}^{*\bullet}$ be the set of \emph{marked} pyramids in $\Pi_{md}^*$, \emph{i.e.} the set of pairs $(H,i)$ where $H$ is a pyramid with unique maximal segment $M$, and $i$ is the abscissa of one of the points of $M$.

\begin{Definition}\label{def:poidsmd}
To any heap $H$ in $\cH(\cP_{md},\cC)$ we associate a weight $v(H)$ as in~\eqref{eq:weight}, by assigning to monomers and dimers the respective  weights
\begin{equation}\label{eq:poidsMD}
v([i])=xq^i, \quad \mbox{and} \quad v([i;i+1])=x^2q^{2i+1}.
\end{equation}
More explicitly, in $v(H)$, the variable $x$ counts the number of monomers plus twice the number of dimers (equivalently, $\ell(H)+|H|$), while $q$ counts the sum of the abscissas of all the extremities of the segments of $H$. 
\end{Definition}

We now come back to the graph $G$.  Elementary cycles are either labeled loops $(i\to i)$  or cycles $(i\to i+1 \to i)=(i+1\to i \to i+1)$. By identifying $(i\to i)$ with the monomer $[i]$ and  $(i\to i+1 \to i)$ with the dimer $[i,i+1]$, the model of heaps $\cH(G)$ is identified with $\cH(\cP_{md}^ *,\cC)$. Moreover, $v_G((i\to i))=xq^i$ and $v_G((i\to i+1 \to i))=xq^ixq^{i+1}=x^2q^{2i+1}$, therefore $v_G$ coincides with the weight $v$ defined by~\eqref{eq:poidsMD} via this identification. Recall the bijection $\psi$  defined in Section~\ref{subsec:enumcycles}. Then  Corollary~\ref{Cor:bijCyclesPyramids} implies the following result.

\begin{Proposition}
The map $\bijheapscycle$ is a bijection between $\Cylset^*$ and $\Pi_{md}^{*\bullet}$ such that  if $\pi=\bijheapscycle(\omega)$ and $\omega$ has length $n$ and area $a$, then $v(\pi)=x^nq^a$. 
\end{Proposition}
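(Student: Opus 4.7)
The plan is to deduce this proposition directly from Corollary~\ref{Cor:bijCyclesPyramids} after matching up the relevant structures. First I would identify the elementary cycles of the graph $G$. Since every edge of $G$ is either a labeled loop at a vertex $i$ or a directed edge between consecutive vertices $i$ and $i+1$, the only self-avoiding closed paths of positive length are the loops $(i\to i)$ (with label $L$ or $R$, and only label $L$ if $i=0$) and the 2-cycles $(i\to i+1\to i)=(i+1\to i\to i+1)$. Under the identifications $(i\to i)\leftrightarrow [i]$ (keeping the label) and $(i\to i+1\to i)\leftrightarrow [i,i+1]$, the set of elementary cycles of $G$ matches the set of basic pieces of $\cP_{md}^*$ bijectively.

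Next I would check that the concurrency relations agree and that the weights match. Two elementary cycles share a vertex if and only if the corresponding segments in $\cP_{md}^*$ intersect: a loop at $i$ concurs with the loops at $i$ and with the 2-cycles on $\{i-1,i\}$ and $\{i,i+1\}$, and two 2-cycles concur iff their underlying edges share a vertex; this is exactly $\cC$ restricted to $\cP_{md}^*$. Consequently $\cH(G)$ can be identified with $\cH(\cP_{md}^*,\cC)$. On the weight side, $v_G((i\to i))=xq^i=v([i])$ since the loop at $i$ carries weight $xq^{\area(e)}=xq^i$, while $v_G((i\to i+1\to i))=xq^i\cdot xq^{i+1}=x^2q^{2i+1}=v([i,i+1])$. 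Thus the weight functions coincide under the identification, and for any elementary cycle (or any heap assembled from them) the $x$-degree counts the total number of arcs traversed while the $q$-degree equals the sum of area contributions.

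Now I would invoke Corollary~\ref{Cor:bijCyclesPyramids}: for each vertex $u$ of $G$, the bijection $\psi$ restricts to a weight-preserving bijection between paths $\omega\in\Cylset^*$ starting and ending at $u$ and pyramids $H\in\Pi_{md}^*$ whose unique maximal piece contains the abscissa $u$. Decomposing $\Cylset^*$ as the disjoint union, over $u\geq 0$, of its subsets of paths based at $u$, the combined map $\omega\mapsto (\psi(\omega),u)$, where $u$ is the common starting/ending vertex of $\omega$, is a bijection onto the disjoint union, over $u\geq 0$, of pyramids $H\in\Pi_{md}^*$ whose maximal piece contains $u$ paired with the marker $u$. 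But this disjoint union is, by definition, exactly $\Pi_{md}^{*\bullet}$, since specifying a marked pyramid amounts to choosing a pyramid together with an abscissa belonging to its maximal segment.

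Finally, for the weight statement, if $\omega$ has length $n$ and area $a$, then $v_G(\omega)=x^nq^a$. Since $\psi$ is weight-preserving and the weight of a heap coincides with the product of the weights of its elementary cycles, we obtain $v(\pi)=v_G(\omega)=x^nq^a$. The only delicate point of the argument is ensuring that the indexing by starting vertices on the path side corresponds precisely to the marking datum on the heap side; but this is unambiguous because Corollary~\ref{Cor:bijCyclesPyramids} identifies the starting vertex of a closed path with a specific vertex contained in the maximal piece of its associated pyramid, which is exactly what a mark records.
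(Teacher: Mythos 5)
Your proposal is correct and follows essentially the same route as the paper: identify the elementary cycles of $G$ with labeled monomers and dimers, verify that the concurrency relation and the weights $v_G$ and $v$ agree under this identification, and then apply Corollary~\ref{Cor:bijCyclesPyramids} vertex by vertex, with the base vertex of the closed path becoming the mark on the maximal piece. The paper states this more tersely in the paragraph preceding the proposition, but the content is identical.
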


An example is provided in Figure~\ref{fig:walksPyramids} , middle and right.


\begin{figure}[!ht]
\begin{center}
 \includegraphics[width=\textwidth]{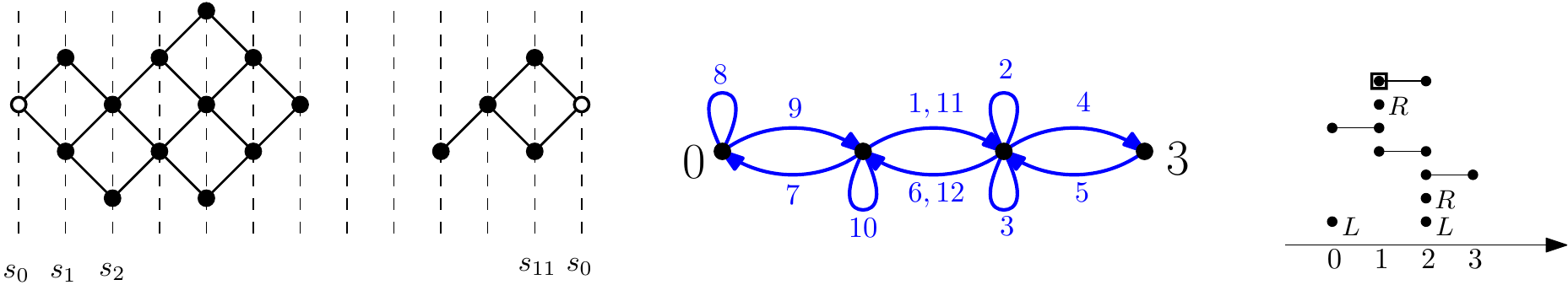}
 \caption{Bijection between affine alternating diagrams, paths on the graph $G$, and marked pyramids of monomers and dimers. The weight is $x^{12}q^{17}$.}\label{fig:walksPyramids}
 \end{center} 
\end{figure}

\begin{Remark}\label{rem:sansetoile}
Note that in the same way, the more general set $\Cylset$ of walks on the graph $G'$, defined by adding a loop labeled $R$ at vertex $0$ to the graph $G$, is in bijection with $\Pi_{md}^\bullet$, the set of marked pyramids in $\Pi_{md}$ without the additional condition on the monomers at abscissa $0$. 
\end{Remark}


Let $\Col_n$ be the image $\bijheapscycle(\mathcal{E}_n)$: it consists of heaps made of $n$ monomers at the same positive abscissa and all labeled $L$, or $R$. By combining the previous proposition with Theorem~\ref{theorem:walksonG}, we get the main result of this section.

\begin{Theorem}\label{thm:diagramstopyramids}
Let $n\geq 2$. The map $\BijDiagramPyramid:=\bijheapscycle\circ \varphi$  is a bijection between $\widetilde{\mathcal{D}}(n)$ and the set of pyramids $\pi$ in $\Pi_{md}^{*\bullet}\setminus\Col_n$ whose weight $v(\pi)$ has exponent $n$ in $x$. If $\pi=\BijDiagramPyramid(D)$, then $v(\pi)=x^nq^{|D|}$. Moreover,
\begin{enumerate}
\item[$(i)$] $D$ is a finite alternating diagram if, and only if the maximal piece of  $\pi$ is marked at vertex $0$; in particular, this piece must be of the form $[0]$ or $[0,1]$.
\item[$(ii)$] $D$ is self-dual if, and only if in $\pi$, the monomers may occur only at abscissa $0$.
\end{enumerate}

\end{Theorem}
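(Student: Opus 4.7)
The plan is to observe that $\BijDiagramPyramid = \bijheapscycle \circ \varphi$ is the composition of two bijections already established in the section, and then to track each assertion of the theorem through these two maps in turn.

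First, Theorem~\ref{theorem:walksonG} provides the bijection $\varphi : \widetilde{\mathcal{D}}(n) \to \Cylset_n^*\setminus \mathcal{E}_n$ with $|D|=\area(\varphi(D))$, while the Proposition preceding Theorem~\ref{thm:diagramstopyramids} (combined with the identification of $\cH(G)$ with $\cH(\cP_{md}^*,\cC)$ given just before it) provides a bijection $\bijheapscycle : \Cylset^* \to \Pi_{md}^{*\bullet}$ sending a closed path of length $n$ and area $a$ to a marked pyramid of weight $x^nq^a$. Composing, I would identify the image of $\BijDiagramPyramid$ as $\bijheapscycle(\Cylset_n^*)\setminus\bijheapscycle(\mathcal{E}_n)$, which by the very definition $\Col_n:=\bijheapscycle(\mathcal{E}_n)$ and by the weight formula becomes exactly the pyramids in $\Pi_{md}^{*\bullet}\setminus\Col_n$ whose $v$-weight has $x$-exponent equal to $n$. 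Chaining the two length/area preservations then gives $v(\pi)=x^nq^{|D|}$.

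For part (i), Theorem~\ref{theorem:walksonG}(i) says $D$ is finite iff $\varphi(D)$ starts at vertex $0$. In the construction of $\bijheapscycle$ coming from Corollary~\ref{Cor:bijCyclesPyramids}, the marking of the unique maximal piece of $\bijheapscycle(\omega)$ is precisely the starting (equivalently ending) vertex of the closed path $\omega$; this is how the pair $(H,i)$ of $\Pi_{md}^{*\bullet}$ is produced. Hence $D$ is finite iff the maximal piece of $\pi$ is marked at abscissa $0$. Since the unique maximal piece is the image of an elementary cycle of $G$ passing through vertex $0$, and the only such elementary cycles are the loop at $0$ and $0\to 1\to 0$, this piece must be $[0]$ or $[0,1]$.

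For part (ii), Theorem~\ref{theorem:walksonG}(ii) says $D$ is self-dual iff the only possible loops in $\varphi(D)$ occur at vertex $0$. Under the identification of $\cH(G)$ with $\cH(\cP_{md}^*,\cC)$ recalled earlier, loops at vertex $i$ correspond to monomers $[i]$ and elementary cycles $i\to i+1\to i$ to dimers $[i,i+1]$; hence the condition on $\varphi(D)$ translates exactly to: monomers of $\pi$ appear only at abscissa $0$, with no restriction on the dimers. The only really delicate step in the entire argument is the explicit verification in (i) that the marking recorded by $\Pi_{md}^{*\bullet}$ coincides with the starting vertex of the closed path under $\bijheapscycle$; this is essentially built into Corollary~\ref{Cor:bijCyclesPyramids}, so once it is made explicit the rest of the proof is a routine chase through the two bijections.
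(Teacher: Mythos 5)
Your argument is correct and is precisely the paper's route: the paper states the theorem as an immediate consequence of combining the proposition giving $\bijheapscycle:\Cylset^*\to\Pi_{md}^{*\bullet}$ with Theorem~\ref{theorem:walksonG}, and your proposal simply spells out that composition, including the key observation that the mark on the maximal piece records the starting vertex of the closed path. No discrepancies to report.
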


\subsection{Generating functions for 321-avoiding (affine) permutations}

In this subsection we provide  bijective proofs for the generating functions of 321-avoiding (affine) permutations and involutions given in Theorems~\ref{thm:A-tA} and~\ref{thm:A-tA-inv}. The method relies on the previous bijections, (some refinement of) the Inversion Lemma, and the computation of \emph{signed generating series}  $\sum_{T\in\cT}(-1)^{|T|}v(T)$ for specific sets $\cT$ of trivial heaps. 
More precisely, we will express our enumerative results in terms of the following series:

\beq\label{h-def}
\hh(x):= \sum_{n \ge 0} \frac{(-x)^n q^{\binom{n}{2}}(xq^n;q)_\infty}{(q;q)_n},
\eeq
and
\beq\label{j-def}
\jj(x):=\sum_{n\ge 0} \frac{(-x)^n q^{\binom{n}{2}}(xq ^{n+1};q)_\infty}{(q;q)_n},
\eeq
where for $n\ge 0$, we  recall the $q$-Pochhammer symbol $(x;q)_n= (1-x)(1-xq)\cdots(1-xq^{n-1})$, whose definition is extended to the limit case $n=\infty$ as an infinite product. By using the expressions~\eqref{h-def} and~\eqref{j-def} we first note that
\beq\label{jh}
\jj(x)=\hh(x)+x\hh(xq).
\eeq
The next result is a crucial tool towards our enumeration purposes. We postpone its combinatorial proof to the next subsection.
\begin{Theorem}\label{theorem:trivialh}
The signed generating series of the set $\cT_{md}$ of trivial heaps  of monomers (labeled $L$ or $R$) and dimers, is equal to $h(x)$. 
\end{Theorem}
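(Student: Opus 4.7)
The plan is to prove $F(x):=\sum_{T\in\cT_{md}}(-1)^{|T|}v(T)=\hh(x)$ via a sign-reversing involution on $\cT_{md}$ combined with a recurrence-based identification. The key observation is that, for each $i\ge 0$, a dimer at positions $[i,i+1]$ and the two-monomer configuration consisting of an $L$ at $i$ together with an $R$ at $i+1$ both carry the absolute weight $x^2 q^{2i+1}$ but contribute with opposite signs ($-1$ for the single dimer versus $+1$ for the two monomers), making them natural candidates for cancellation.

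I would define an involution $\iota:\cT_{md}\to\cT_{md}$ by locating, for each $T$, the smallest $i_0\ge 0$ such that $T$ contains either the dimer $[i_0,i_0+1]$ or the ordered pair ($L$ at $i_0$, $R$ at $i_0+1$), and swapping that local configuration (dimer $\leftrightarrow$ $LR$-pair) while leaving every other piece of $T$ unchanged; heaps with no such $i_0$ are fixed points. A short case analysis confirms that the swap preserves $i_0$ as the smallest qualifying index in $\iota(T)$: positions below $i_0$ are untouched, a preexisting $L$ at $i_0-1$ can only combine with the new $L$ at $i_0$ to form an $LL$ rather than the forbidden $LR$, and a dimer $[i_0-1,i_0]$ could not have coexisted with the original piece at $i_0$. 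Hence $\iota$ is a well-defined sign-reversing involution whose fixed points are exactly the trivial heaps containing no dimer and no $L$-at-$i$-followed-by-$R$-at-$(i+1)$ pattern; structurally, such a fixed point is a disjoint union of maximal runs of consecutive monomer positions, and within a length-$k$ run the only admissible label sequences are the $k+1$ words of the form $R^a L^{k-a}$.

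To finish, one identifies the resulting signed generating function of fixed points with $\hh(x)$. I expect the cleanest approach is to show that both $F(x)$ and $\hh(x)$ satisfy the recurrence
\[
f(x)=(1-2x)\,f(xq)-x^2 q\,f(xq^2),\qquad f(0)=1.
\]
For $F(x)$ this recurrence is immediate by decomposing a trivial heap according to what occupies position $0$ (empty, $L$-monomer, $R$-monomer, or left end of a dimer at $[0,1]$). The main technical obstacle is verifying the same recurrence for $\hh(x)$: this is a $q$-series identity that, after reindexing the shifted sums defining $\hh(xq)$ and $\hh(xq^2)$, reduces to the telescoping relation $(xq^n;q)_\infty=(1-xq^n)(xq^{n+1};q)_\infty$ together with routine manipulation of the $(q;q)_n$-factors and the powers $q^{\binom{n}{2}}$. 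Once the recurrence identity is secured, uniqueness of the recursion forces $F(x)=\hh(x)$.
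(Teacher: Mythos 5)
Your first step (the sign-reversing involution exchanging a dimer $[i_0,i_0+1]$ with the pair $L$ at $i_0$, $R$ at $i_0+1$ at the minimal qualifying index) coincides exactly with the paper's, and your case analysis showing that minimality is preserved is sound. After that the two proofs genuinely diverge. The paper stays bijective to the end: it reads the fixed points as words avoiding the factor $LR$, splits such a word into its $R$-subword and $L$-subword via a projection bijection, tracks the weight through the identity $\sum_{w_i\in\{L,R\}}i=\sum_{\pi_L(w)_i=L}i+\sum_{\pi_R(w)_i=R}i+|w|_R|w|_L$, and sums over the resulting pairs of distinct-part partitions to produce $\hh(x)$ term by term. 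You instead characterize both sides by the $q$-difference equation $f(x)=(1-2x)f(xq)-x^2q\,f(xq^2)$ with $f(0)=1$ --- and note that in this form your involution becomes superfluous, since conditioning a trivial heap on what covers abscissa $0$ already yields the recurrence for the full signed series $F(x)$, not just for the fixed-point series. Your two remaining obligations do check out: the recurrence determines $f$ coefficientwise because $1-q^k$ is invertible for $k\geq1$, and $\hh$ satisfies it by exactly the telescoping you anticipate (writing $P_n=(xq^n;q)_\infty$ and $c_n=(-x)^nq^{\binom{n}{2}}/(q;q)_n$, one gets $\hh(x)=\sum_nc_nP_{n+1}-x\hh(xq)$, then $\sum_nc_n(1-q^n)P_{n+1}=-x\sum_nc_nq^nP_{n+2}$ after shifting $n$, and finally $\sum_nc_nq^nP_{n+2}=\hh(xq)+xq\,\hh(xq^2)$, which combine to the stated equation). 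What the paper's longer route buys is an explicitly combinatorial derivation of the series --- in line with its stated aim of bijective proofs, and with machinery reused for $\jj(x)$ and for the involution case; what yours buys is brevity, at the price of an algebraic rather than bijective identification of $\hh(x)$.
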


We actually need the signed generating function for the set of trivial heaps with pieces in $\cP_{md}^*$. Starting from a trivial heap in $\cT_{md}$, decompose it according to whether it contains a monomer labeled $R$ at abscissa $0$ or not. If there is such a monomer, then by Theorem~\ref{theorem:trivialh} the corresponding signed generating series is $-xh(xq)$. Therefore $h(x)+xh(xq)$ is the signed generating series of the set $\cT_{md}^*$, and~\eqref{jh} has the following consequence.
 
\begin{Corollary}\label{coro:trivialj}
The  signed \gf \  of  the set $\cT_{md}^*$ of trivial heaps of monomers (labeled $L$ or $R$, except at abscissa $0$ where the only label is $L$) and dimers, is equal to $\jj(x)$.
\end{Corollary}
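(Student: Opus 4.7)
The plan is to flesh out the two-line argument that already appears in the paragraph preceding the corollary statement, and show carefully that the decomposition of $\cT_{md}$ according to the presence or absence of the monomer $[0]^R$ yields the claimed identity.

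First I would partition $\cT_{md}$ into two classes: the class $\cT_1$ of trivial heaps $T$ that do not contain the monomer $[0]^R$, and the class $\cT_2$ of those that do. Since $\cP_{md}^* = \cP_{md} \setminus \{[0]^R\}$ and trivial heaps are defined by the alphabet of allowed pieces, $\cT_1$ is literally $\cT_{md}^*$. Thus Theorem~\ref{theorem:trivialh} gives
\[
\sum_{T\in \cT_{md}^*}(-1)^{|T|}v(T) + \sum_{T\in \cT_2}(-1)^{|T|}v(T) = \hh(x).
\]

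Next I would analyse $\cT_2$. Because $[0]^R$ is in concurrence with both $[0]^L$ and the dimer $[0,1]$, any trivial heap containing $[0]^R$ must avoid those two pieces; hence $T = \{[0]^R\} \sqcup T'$ with $T'$ a trivial heap whose pieces live among monomers at abscissa $\geq 1$ (with either label) and dimers $[i,i+1]$ for $i \geq 1$. The key observation is that the weight substitution $x \mapsto xq$ in Definition~\ref{def:poidsmd} shifts abscissas by one: $v([i])|_{x\mapsto xq} = xq^{i+1}=v([i+1])$ and $v([i,i+1])|_{x\mapsto xq} = x^2q^{2i+3}=v([i+1,i+2])$. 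Therefore the signed generating function of the set of admissible $T'$ is exactly $\hh(xq)$. Taking into account the extra piece $[0]^R$, which contributes the weight $v([0]^R)=x$ and an additional sign, we get
\[
\sum_{T\in \cT_2}(-1)^{|T|}v(T) = -x\,\hh(xq).
\]

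Combining the two computations yields
\[
\sum_{T\in \cT_{md}^*}(-1)^{|T|}v(T) = \hh(x) + x\,\hh(xq),
\]
and the right-hand side equals $\jj(x)$ by the identity~\eqref{jh}. No real obstacle arises here: the only points to be careful about are (a) that forbidding $[0]^R$ in $\cP_{md}$ really gives the full alphabet of $\cP_{md}^*$, and (b) that the shift $x\mapsto xq$ correctly encodes the bijection between trivial heaps of pieces at abscissa $\geq 1$ and all trivial heaps in $\cT_{md}$, which is immediate from the explicit form of the weights.
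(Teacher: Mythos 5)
Your proposal is correct and follows exactly the paper's own argument: the paper also decomposes $\cT_{md}$ according to the presence of the monomer labeled $R$ at abscissa $0$, identifies the signed generating series of the heaps containing it as $-x\hh(xq)$ via the shift of abscissas, and concludes with the identity~\eqref{jh}. Your write-up merely makes explicit the bijective shift $x\mapsto xq$ that the paper leaves implicit.
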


 We are now ready to derive combinatorially Theorem~\ref{thm:A-tA}. 

\begin{proof}[Proof of Theorem~\ref{thm:A-tA}.]
Let us define $\Pi_{md}^{*\bullet}(x;q):=\sum_{\pi\in\Pi_{md}^{*\bullet}}v(\pi)$, where $v(\pi)$ is the weight of Definition~\ref{def:poidsmd}. By Theorems~\ref{thm:bijectionSD} and~\ref{thm:diagramstopyramids}, we have
\beq\label{tA-P}
\tS(x,q)= \Pi_{md}^{*\bullet}(x;q) -2\sum_{n\ge 1} \frac{x^n q^n}{1-q^n},
\eeq
since the second term on the right-hand side enumerates heaps in the sets $\Col_n$, for $n\geq1$. We therefore need to compute the generating function of the set $\Pi_{md}^{*\bullet}$ of pyramids in $\Pi_{md}^{*}$ which are marked on their maximal segment. This is a consequence of the Inversion Lemma~\ref{lem:inversion}: indeed, one only needs to refine the proof of Corollary~\ref{Cor:enumpyramids} by taking into account the mark on one point of the maximal piece $M$. This gives
\[
\Pi_{md}^{*\bullet}(x;q)=\sum_{M\in\cP_{md}^{*}}(\ell(M)+1)\left(\sum_{H:\, \Max(H)\subseteq \{M\}}v(H)-1\right),
\]
where $\ell(M)$ is the length of the piece $M$. By applying Lemma~\ref{lem:inversion} and exchanging the summations we get
$$\sum_{\pi \in \Pi_{md}^{*\bullet}} v(\pi)=\displaystyle{-\frac{\displaystyle \sum_{T \in \cT_{md}^*} (-1)^{|T|}(\ell(T)+|T|)v(T)}{\displaystyle\sum_{T\in \cT_{md}^*} (-1)^{|T|}v(T)}},
$$
where $\ell(T)$ is the length of $T$. By Corollary~\ref{coro:trivialj}, the denominator above is $\jj(x)$ defined in~\eqref{j-def}, and therefore we get
$$
\Pi_{md}^{*\bullet}(x;q)=- x\frac{j'(x)}{\jj(x)},
$$
where the derivative is taken with respect to $x$.
Next, by using the definitions~\eqref{J-def} and~\eqref{j-def} of $J$ and $j$, we have $J(x)=j(x)/(xq;q)_\infty$, from which we deduce
$$
\Pi_{md}^{*\bullet}(x;q)=- x\frac{J'(x)}{J(x)}-x\sum_{i\ge 1} \frac{-q^i}{1-xq^i}=- x\frac{J'(x)}{J(x)}+\sum_{n\ge 1} \frac{x^n q^n}{1-q^n}.
$$
Returning to~\eqref{tA-P},  this gives the second result of
Theorem~\ref{thm:A-tA}.

We now consider $321$-avoiding permutations, which are by Theorem~\ref{thm:diagramstopyramids}~(i) in bijection with elements of $\Pi_{md}^{*\bullet}$ whose maximal piece has the form $[0]$ or $[0,1]$ and is marked at vertex $0$. This implies that the mark gives no information and can therefore be forgotten. By 
the Inversion Lemma~\ref{lem:inversion}, one gets:
\beq\label{A-inv}
xS(x,q)= \frac{\hh_0(x)}{\jj(x)}-1,
\eeq
where  $\hh_0(x)$ is the signed \gf\ of trivial heaps in $\cT_{md}$ that have no monomer or dimer at abscissa $0$. Since such heaps are obtained by translating one step to the right any trivial heap in $\cT_{md}$, we have $\hh_0(x)=\hh(xq)$, as $h$ is by Theorem~\ref{theorem:trivialh} the signed generating function of trivial heaps  in $\cT_{md}$. Combining this and~\eqref{A-inv}, we get by the definitions~\eqref{h-def} and~\eqref{j-def}:
$$S(x,q)=\frac{h(xq)-\jj(x)}{x\jj(x)}=\frac{\jj(xq)}{\jj(x)}.$$
This proves the first result of Theorem~\ref{thm:A-tA} by using the above relation between $J$ and $j$.
\end{proof}

In the same spirit, we prove combinatorially the counterpart for involutions.

\begin{proof}[Proof of Theorem~\ref{thm:A-tA-inv}.]
Thanks to Theorem~\ref{thm:diagramstopyramids}~(ii), the marked pyramids that we have to enumerate may have monomers only at abscissa $0$, with label $L$. Note that this
automatically rules out pyramids consisting of monomers lying at
positive abscissa. Using the same argument as in the previous proof, we obtain
$$
\ctS(x,q)= - x\frac{\cj'(x)}{\cj(x)},
$$
where $\cj(x)$ is the signed generating series for the set $\cT_L$ of trivial heaps of dimers and eventual monomers labeled $L$ lying at abscissa $0$. 
First note that the signed generating series for the set of trivial dimers is given by
\begin{equation}\label{part}
\ch(x):=\sum_{n\geq0}\frac{(-1)^{n} x^{2n}q^{2n \choose 2}}
{(q^2;q^2)_{n}}.
\end{equation}
Indeed, each such trivial heap with $n$ dimers corresponds to an integer partition $\lambda$ having $n$ odd parts such that the difference between two consecutive parts is greater or equal to $4$: to each dimer $[i,i+1]$, it suffices to associate the part $2i+1$ of $\lambda$. It is then classical to prove that the generating function $\sum_\la(-x^2)^{\ell(\lambda)}q^{|\lambda|}$, where $\ell(\lambda)$ is the number of parts, is equal to~\eqref{part}.

Next, by discussing whether there is a monomer at abscissa $0$ or not, the signed generating series of the set $\cT_L$ can be computed in a direct way as $$\cj(x)=\sum_{n\geq0}\frac{(-1)^{n} x^{2n}q^{2n \choose 2}}
{(q^2;q^2)_{n}}-x\sum_{n\geq0}\frac{(-1)^{n} (xq)^{2n}q^{2n \choose 2}}
{(q^2;q^2)_{n}}=\UU(x),$$
where $\UU(x)$ is defined in~\eqref{DD-def}. This gives the second result of Theorem~\ref{thm:A-tA-inv}. 

Finally, when we restrict our study to $321$-avoiding involutions, Theorem~\ref{thm:diagramstopyramids} yields a bijection with pyramids whose pieces are in $\cP_{md}$, with unique maximal piece $[0]$ or $[0,1]$, and having eventual monomers only at abscissa $0$, labeled $L$. As for~\eqref{A-inv}, the \gf\ now reads 
$$x\cS(x,q)= \frac{\ch_0(x)}{\UU(x)}-1,
$$
where  $\ch_0(x)$ is the signed \gf\ of trivial heaps of dimers at positive abscissa. As we have 
$$\UU(x)-\ch_0(x)=-x\UU(-xq),$$
this yields the first expression of Theorem~\ref{thm:A-tA-inv}.
\end{proof}

\begin{Remark}\label{rk:gfchemins}
\emph{One can also obtain closed form expressions for the
generating series of the sets of walks $\Cylset$ and $\Cylset^*$, denoted respectively $O(x)$ and $O^*(x)$ in~\cite{BJN-long}, using this approach. With the above notation,
$$
O(x)=-x\frac{h'(x)}{\hh(x)}\quad\mbox{and}\quad
O^*(x)=-x\frac{j'(x)}{\jj(x)},
$$
where the series $h$ and $j$ are defined in~\eqref{h-def} and~\eqref{j-def}, respectively. Similar expressions relate the generating series of walks, defined in~\cite{BJN-inv} and corresponding to involutions, to the series $\UU$ above and its companion $\ch$: if we denote by $\bar O$ (\emph{resp.} $\bar O^*$) the generating function for walks in $\Cylset^*$ having no loop $i\to i$ (\emph{resp.} only at vertex $0$), then we have
$$
\bar O(x)=-x\frac{\ch'(x)}{\ch(x)}\quad\mbox{and}\quad
\bar O^*(x)=-x\frac{\UU'(x)}{\UU(x)},
$$
where $\ch(x)$ is defined in~\eqref{part}.}
\end{Remark}

\subsection{Enumeration of trivial heaps of monomers and dimers}\label{subseq:enumeration}

We now give a combinatorial proof of Theorem~\ref{theorem:trivialh}. Our first step is to simplify the set $\cT_{md}$ that we have to enumerate. Consider the trivial heaps $T$ in $\cT_{md}$ that contain a dimer $[i,i+1]$ for a certain $i\geq 0$, or contain two monomers labeled $L$ and $R$ at positions $i$ and $i+1$ (or contain both configurations). On the set of such heaps, define the function $T\mapsto I(T)$ by first considering $i$ minimal such that one of the two cases occurs, and then by exchanging the dimer case with the consecutive monomer case, see an example below. This is clearly an involution,  which preserves weights since $x^2q^{2i+1}=(xq^i)(xq^{i+1})$, and switches the sign since the total number of pieces changes by $\pm 1$.
 
\begin{center}
\includegraphics[width=12cm]{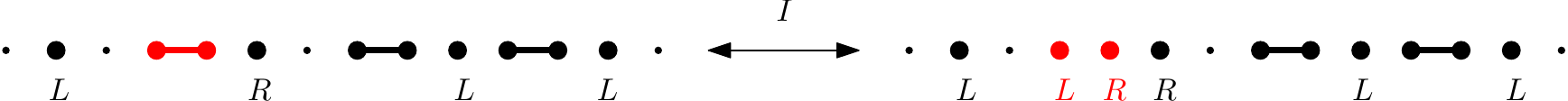}
\end{center}

We thus have to enumerate trivial heaps of
monomers labeled $L$ or $R$, where no
monomer labeled $L$ in position $i$ can be followed by a monomer
labeled $R$ in position $i+1$. These heaps can be naturally considered as infinite words $w=w_0w_1w_2\cdots$ on the alphabet $\{0,L,R\}$ with a finite number of nonzero letters and which avoid the (contiguous) factor
$LR$. Let us call this set of words $\mathcal{W}$, and for any word $w\in\mathcal{W}$, denote by $|w|_L$ (\emph{resp.} $|w|_R$) the number of occurrences of $L$ (\emph{resp.} $R$) in $w$. Our task is now to show that $\hh(x)$ is the generating function of $\mathcal{W}$ with sign $(-1)^k$ and weight
 $x^kq^l$, where $k$ is the total number of nonzero letters and $l$ is the sum of their indices in the word. Summarizing, we need to show
 \begin{equation}
 \label{eq:hword}
 \hh(x)=\sum_{w\in\mathcal{W}}(-x)^{|w|_L+|w|_R}q^{\sum_{w_i\in\{L,R\}} i}.
 \end{equation}

Consider the application $Pr:w\to(\pi_R(w),\pi_L(w))$
where $\pi_R(w)$ (\emph{resp.} $\pi_L(w)$) is obtained from $w$ by
removing all occurrences of $L$ (\emph{resp.} occurrences of $R$) in $w$. 

\begin{Lemma}
\label{lemma}
$Pr$ is a bijection from $\mathcal{W}$ to the set
of pairs $(w^R,w^L)$ of words where $w^R$ (\emph{resp.} $w^L$) is a
word on $\{0,R\}$ (\emph{resp.} $\{0,L\}$) with a finite number of nonzero letters. 

Moreover for any word $w$ in $\mathcal{W}$,
\begin{equation}
\label{eq:weights}
\sum_{w_i\in\{L,R\}} i=\sum_{\pi_L(w)_i=L} i+\sum_{\pi_R(w)_i=R} i+|w|_R|w|_L.
\end{equation}
\end{Lemma}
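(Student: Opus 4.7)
The plan is to construct the inverse map $\Psi$ explicitly via a greedy algorithm and to prove the weight identity by a direct double-counting argument.

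For the weight formula: each non-zero letter $w_j \in \{L, R\}$ of $w$ gets shifted by the projection by precisely the number of opposite-labeled letters at positions strictly less than $j$. Hence
\[
\sum_{w_j = L} j = \sum_{\pi_L(w)_i = L} i + \sum_{w_j = L} |\{k < j : w_k = R\}|,
\]
and symmetrically for $R$'s. Adding both identities, the two correction sums together count every unordered pair of one $L$ and one $R$ in $w$ exactly once (the first for pairs where the $R$ comes first, the second for those where the $L$ comes first), totaling $|w|_L \cdot |w|_R$. This gives \eqref{eq:weights}.

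Next, I define $\Psi : (w^R, w^L) \mapsto w$ by building $w$ from left to right while maintaining the counts $r_0, l_0$ of $R$'s and $L$'s already placed. At position $n$: set $w_n := R$ if the $(r_0+1)$-th $R$ of $w^R$ sits at position $n - l_0$; else set $w_n := L$ if the $(l_0+1)$-th $L$ of $w^L$ sits at position $n - r_0$; else $w_n := 0$. The relations $\pi_R(\Psi(w^R, w^L)) = w^R$ and $\pi_L(\Psi(w^R, w^L)) = w^L$ are immediate from the construction. To see that $\Psi$ lands in $\mathcal{W}$: if $w_n = L$, the $R$-readiness condition $p_{r_0+1} = n - l_0$ failed at step $n$; after incrementing $l_0$ and $n$ by $1$, the new condition $p_{r_0+1} = (n+1) - (l_0+1) = n - l_0$ is identical to the failed one, so $w_{n+1} \neq R$, and no $LR$ factor is produced.

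The main obstacle is verifying $\Psi \circ Pr = \mathrm{id}$, for which I induct on $n$. The delicate case is $w_n \in \{0, L\}$, where one must exclude that the algorithm prematurely places an $R$. A direct computation shows that $R$ is ready at step $n$ exactly when every position in the interval $[n, P-1]$ of $w$ carries an $L$, with $P > n$ the next $R$-position of $w$. When $w_n = 0$ this would force $w_n = L$, an immediate contradiction; when $w_n = L$ it forces $w_{P-1} = L$ and $w_P = R$, a forbidden $LR$ factor contradicting $w \in \mathcal{W}$. Hence $R$ is never ready when $w_n \neq R$, and the greedy priority rule reproduces $w$ exactly. This is the only place where the no-$LR$ constraint is invoked essentially, and it is precisely what makes the bijection clean.
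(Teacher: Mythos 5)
Your proof is correct. The weight identity is established exactly as in the paper: you observe that the position of a letter drops under projection by the number of opposite letters preceding it, and that the two correction terms together count all $L$--$R$ pairs once, giving $|w|_R|w|_L$; the paper phrases this with the pair-counting statistics $|w|_{ab}$, but it is the same computation. For the bijection, the paper simply exhibits the inverse globally -- aligning the zeros of $w^R$ and $w^L$ and inserting the block $R^{r_i-r_{i-1}-1}L^{l_i-l_{i-1}-1}$ between consecutive zeros of $w$ -- and leaves the verification to the reader, whereas you build the inverse letter by letter via a greedy rule with $R$-priority and then actually prove both composites are the identity, correctly isolating the one place where the forbidden factor $LR$ is needed (to rule out a premature $R$). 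The two descriptions define the same map, so the route is essentially the paper's, but your version supplies the correctness argument the paper omits; the only points you gloss over are the routine checks that every letter of $w^R$ and $w^L$ is eventually placed (no overshooting of the readiness condition) and the symmetric $L$-readiness verifications, both of which follow from the same interval characterization you already use.
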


\begin{proof}
 The inverse bijection goes as follows: given $(w^R,w^L)$, consider the occurrences $r_0:=-1< r_1<r_2<\cdots$ of the zeros in $w^R$ and
$l_0:=-1< l_1<l_2<\cdots $ of the zeros in $w^L$. Then the word $w$ is obtained by having the factor $R^{r_{i}-r_{i-1}-1}L^{l_{i}-l_{i-1}-1}$ between the $i-1$th and $i$th occurrence of $0$.  

Now introduce the notation $|u|_{ab}=|\{i<j\mid  u_i=a, u_j=b\}|$ for any word $u=u_0u_1u_2\cdots$ on an alphabet $A$ and any letters $a,b\in A$. Then one has 
\begin{align*}
\sum_{w_i\in\{L,R\}} i &=\sum_{w_i=L} i + \sum_{w_i=R} i\\
&=|w|_{0L}+|w|_{LL}+|w|_{RL}+|w|_{0R}+|w|_{RR}+|w|_{LR}\\
&=(|w|_{0L}+|w|_{LL})+(|w|_{0R}+|w|_{RR})+(|w|_{RL}+|w|_{LR})\\
&=\sum_{\pi_L(w)_i=L} i+\sum_{\pi_R(w)_i=R} i+|w|_R|w|_L,
\end{align*}
which is precisely~\eqref{eq:weights}.
\end{proof}

\begin{figure}[!h]
\begin{center}
\includegraphics[width=9cm]{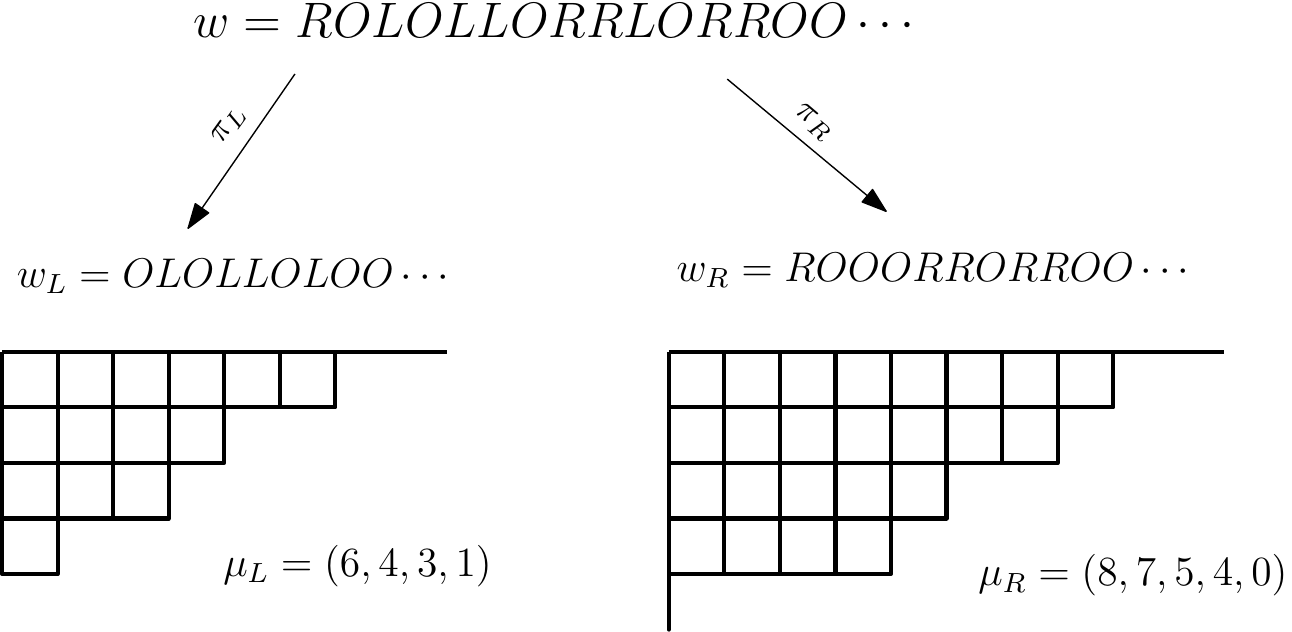}
\caption{Bijection between $\mathcal{W}$ and pairs of partitions with distinct parts.}\label{fig:bij_LR}  
\end{center}
\end{figure}

We can now finish the proof of Theorem~\ref{theorem:trivialh}. Given $w_R$ and $w_L$, encode them by two integer
partitions $\mu_R$ and $\mu_L$ with distinct parts in
$\{0,1,2,\dots\}$, by recording the positions
of the letters $R$ and $L$. See Figure~\ref{fig:bij_LR} for an example. Let $\mathcal{D}$ be this set of partitions. For $\mu\in \mathcal{D}$ we consider the weight $f(\mu)=(-x)^{\ell(\mu)}q^{|\mu|}$, where $\ell(\mu)$ is the number of parts of $\mu$ and  $|\mu|$ is the sum of its parts. The bijection $w \to (\mu_R,\mu_L)$ is then weight-preserving if we give the weight $f(\mu_R)f(\mu_L)q^{\ell(\mu_R)\ell(\mu_L)}$ to $(\mu_R,\mu_L)$, thanks to~\eqref{eq:weights} above.

To conclude the proof of~\eqref{eq:hword}, we must finally show that
\begin{equation}\label{eq:hpart}
\sum_{\lambda,\mu\in\mathcal{D}}(-x)^{\ell(\lambda)+\ell(\mu)}q^{|\lambda|+|\mu|+\ell(\lambda)\ell(\mu)}
\end{equation}
is equal to $\hh(x)$. For any partition $\lambda\in\mathcal{D}$ of length $n$, writing $\tilde{\lambda}:=(\lambda_1-(n-1),\dots,\lambda_{n-1}-1,\lambda_n-0)$,
then $\tilde{\lambda}$ is a partition with at most $n$ parts, such that $|\lambda|=|\tilde{\lambda}|+\bi{n}{2}$. Therefore~\eqref{eq:hpart} is equal to
$$
\sum_{n\geq0}\frac{(-x)^nq^{\bi{n}{2}}}{(q;q)_n}\sum_{\mu\in\mathcal{D}}(-xq^n)^{\ell(\mu)}q^{|\mu|}
=\sum_{n\geq0}\frac{(-x)^nq^{\bi{n}{2}}}{(q;q)_n}(xq^n;q)_\infty,
$$
which is the definition~\eqref{h-def} of $\hh(x)$.

\section{Periodic parallelogram polyominoes and heaps of segments}
\label{sec:parallelogram}

We introduce  \emph{periodic parallelogram polyominoes}, which are a natural extension of classical parallelogram polyominoes. We enumerate them  using heaps of segments, extending the approach from~\cite{mbm-viennot} which was applied to the case of the usual parallelogram polyominoes.  We then relate this setting to the $q$-enumeration of $321$-avoiding affine permutations, giving a second bijective proof of Theorem~\ref{thm:A-tA}.

In all this section, let $\H:=\H(\cP,\cC)$ be the set of heaps of segments introduced in Section~\ref{subsec:defiheaps}: $\cP$ is the set of segments $[a,b]$ with $a,b$ integers and $1\leq a\leq b$, and two segments are concurrent if they intersect.
 
\subsection{Heaps of segments and alternating sequences}
\label{subsec:heapsandsequences}
\subsubsection{Sequences}
 \begin{Definition}\label{def:sequence} For $n\geq 0$, let ${\mathbf S}_n$ be the set of sequences $(a_i,b_i)_{1\leq i\leq n}$ of pairs of integers satisfying $1\leq a_i\leq b_i$ for all $i$ and $a_i\leq b_{i-1}$ for $i>1$, i.e.
\begin{equation}
\label{eq:alternating}
a_1\leq b_1\geq a_2\leq b_2\geq \cdots\leq b_{n-1}\geq a_n\leq b_n.
\end{equation}
\end{Definition}
Define  ${\mathbf S}:=\cup_{n\geq 0} {\mathbf S}_n$, and consider $\seq=(a_i,b_i)_{1\leq i\leq n}\in{\mathbf S}$. Following~\cite[Section III]{mbm-viennot}, we associate with $\seq$ a heap $f(\seq)\in\H$ as
\[
f(\seq):=[a_n,b_n]*\dots* [a_1,b_1],
\]
where we recall from Section \ref{subsec:defiheaps} that $*$ denotes the composition of heaps. In words, $f(\seq)$ is  obtained by stacking the segments $[a_n,b_n]$, $[a_{n-1},b_{n-1}]$, \ldots, finishing by $[a_1,b_1]$. For instance the heap below is the image of the sequence $(a_i,b_i)_{1\leq i\leq 5}=(2,5),(5,7),(3,7),(1,2),(1,1)$.

\begin{center}
\includegraphics[scale=0.8]{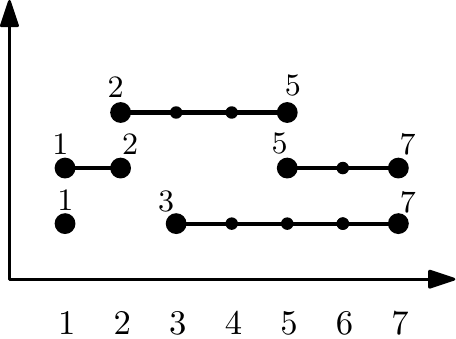}
\end{center}

\begin{Lemma}
Let $\seq=(a_i,b_i)_{1\leq i\leq n}\in{\mathbf S}_n$. Then $[a_n,b_n]$ is the leftmost minimal segment in $f(\seq)$, and $[a_1,b_1]$  is the rightmost maximal segment.
\end{Lemma}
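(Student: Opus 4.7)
The plan is to establish the two statements separately; in both cases the argument leverages the alternating condition $a_{i+1}\leq b_i$ from~\eqref{eq:alternating} to force cascading disjointness relations between segments.

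I would first treat the claim that $[a_n,b_n]$ is the leftmost minimal segment. By construction, $[a_n,b_n]$ sits at the bottom of the composition defining $f(\seq)$, hence is automatically a minimal piece of the heap. To show it has the smallest left endpoint among minimal pieces, I would assume for contradiction that some $[a_j,b_j]$ with $j<n$ is minimal and satisfies $a_j\leq a_n$. Minimality means that $[a_j,b_j]$ is disjoint (non-concurrent) from every $[a_k,b_k]$ with $k>j$. Then an induction on $k$, from $k=j+1$ up to $k=n$, shows that $b_k<a_j$. For the base case $k=j+1$, the inequality $a_{j+1}\leq b_j$ rules out $[a_{j+1},b_{j+1}]$ lying strictly to the right of $[a_j,b_j]$, so disjointness forces $b_{j+1}<a_j$. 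For the inductive step, $a_{k+1}\leq b_k<a_j\leq b_j$ again prevents $[a_{k+1},b_{k+1}]$ from lying strictly to the right of $[a_j,b_j]$, and disjointness yields $b_{k+1}<a_j$. Taking $k=n$ produces $a_n\leq b_n<a_j$, contradicting $a_j\leq a_n$.

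The rightmost maximal claim is proved by the mirror argument. The piece $[a_1,b_1]$ is at the top of the composition and hence maximal. Suppose for contradiction that another maximal $[a_j,b_j]$ with $j>1$ satisfies $b_j\geq b_1$. Maximality forces $[a_j,b_j]$ to be disjoint from every $[a_k,b_k]$ with $k<j$, and a downward induction on $k$, from $k=j-1$ down to $k=1$, shows $a_k>b_j$. The base case uses $a_j\leq b_{j-1}$ to rule out $[a_{j-1},b_{j-1}]$ lying strictly to the left of $[a_j,b_j]$, so disjointness forces $b_j<a_{j-1}$. For the inductive step, combining $a_{k+1}\leq b_k$ with $a_{k+1}>b_j\geq a_j$ yields $b_k>a_j$, so $[a_k,b_k]$ cannot lie strictly to the left of $[a_j,b_j]$; disjointness then gives $a_k>b_j$. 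Applied to $k=1$, this produces $a_1>b_j\geq b_1$, contradicting $a_1\leq b_1$.

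I do not expect a genuine obstacle here: the proof is really an exercise in chasing the inequalities dictated by~\eqref{eq:alternating}. The only care needed is in the bookkeeping of indices and in correctly pairing each disjointness condition with the appropriate consecutive inequality $a_{i+1}\leq b_i$ of the alternating sequence, and in using both the upward and downward directions of the same cascading mechanism for the two halves of the statement.
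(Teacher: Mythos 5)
Your proof is correct. The only difference from the paper is organizational: the paper handles the leftmost-minimal claim by citing Lemme 3.3(i) of Bousquet-M\'elou and Viennot (noting the proof there applies verbatim without the normalization $a_1=1$), and then deduces the rightmost-maximal claim by an explicit involution $\seq\mapsto\seq'$ on ${\mathbf S}_n$ that corresponds to rotating the heap by a half-turn, which exchanges rightmost maxima with leftmost minima. You instead prove both halves directly and self-containedly by the cascading inequality argument, which is exactly the mechanism underlying the cited lemma; your ``mirror argument'' for the second half is the unfolded version of the paper's symmetry reduction. Your version buys a self-contained proof with no external citation; the paper's version buys brevity and makes the duality between the two statements explicit (the involution $\seq\mapsto\seq'$ is also conceptually pleasant since it shows the two claims are literally equivalent). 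One small point worth making explicit in your write-up: the characterization ``$[a_j,b_j]$ minimal iff it is non-concurrent with every $[a_k,b_k]$ for $k>j$'' (and dually for maximality) deserves a one-line justification from the definition of the composition $*$, since it is the hinge on which both inductions turn; it does hold, because any piece directly below piece $j$ in the poset must be concurrent with it and appear earlier in the stacking order.
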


\begin{proof}
 The result for $[a_n,b_n]$ is shown in~\cite[Lemme 3.3(i)]{mbm-viennot} in the special case where $a_1=1$, but the proof applies verbatim. 
 
 To prove the property for $[a_1,b_1]$, one can argue by symmetry as follows: let $a=\min_i a_i$ and $b= \max_i b_i$. Consider the sequence $\seq'=(a'_i,b'_i)_i$ with $a'_i=a+b-b_{n-i+1}$ and $b'_i=a+b-a_{n-i+1}$ for ${1\leq i\leq n}$. This sequence belongs to ${\mathbf S}_n$, and $\seq\mapsto \seq'$ is clearly an involution. By definition of $f$, the heap $H':=f(\seq')$ is obtained from $H:=f(\seq)$ by performing a half-turn. In particular the rightmost maximal segment of $H$ corresponds to the leftmost minimal segment of $H'$. But we know that the latter is $[a'_n,b'_n]=[a+b-b_1,a+b-a_1]$, which corresponds to  $[a_1,b_1]$ in $H$.
\end{proof}

This shows that $f$ is injective: indeed the inverse image of any heap of segments is uniquely determined by successively removing the leftmost minimal segments in a heap of $\H$ and  naming the first one $[a_n,b_n]$, the  second one $[a_{n-1},b_{n-1}]$, and so on until $[a_1,b_1]$.
 Now for any heap $H$ the sequence thus constructed will belong to ${\mathbf S}$ because taking the leftmost minima at all steps ensures the inequalities $a_i\leq b_{i-1}$ for $i>1$. This shows that $f$ is surjective and we get the following result.

\begin{Proposition}\label{prop:pairofsequences}
 $f$ is a bijection between ${\mathbf S}$ and $\H$.
\end{Proposition}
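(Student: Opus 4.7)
The plan is to construct a two-sided inverse to $f$ via iterated extraction of leftmost minimal segments, as already sketched right before the proposition. Injectivity is immediate from the preceding lemma: given $H=f(\seq)$ with $\seq=(a_i,b_i)_{1\le i\le n}$, the piece $[a_n,b_n]$ is forced to be the leftmost minimum of $H$; removing it leaves $f((a_i,b_i)_{1\le i\le n-1})$, and one recovers the whole sequence by induction on $n$.

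For surjectivity, I would define a candidate inverse $g:\H\to\mathbf{S}$ by iterated leftmost-minimum extraction: given $H\in\H$ with $n$ pieces, let $[a_n,b_n]$ be its leftmost minimal segment, set $H_{n-1}:=H\setminus\{[a_n,b_n]\}$, let $[a_{n-1},b_{n-1}]$ be the leftmost minimum of $H_{n-1}$, and so on. The identity $f(g(H))=H$ is then built into the construction, since each extraction reverses precisely the last composition by a bottom-placed minimal piece in the definition of $f$.

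The only real work is to check that $g(H)$ lands in $\mathbf{S}$, i.e.\ that the inequality $a_i\le b_{i-1}$ holds at every stage. By iteration this reduces to the following local claim: if $[a,b]$ is the leftmost minimum of some $K\in\H$ and $[a',b']$ is the leftmost minimum of $K\setminus\{[a,b]\}$, then $a\le b'$. I would prove it by splitting on whether $[a',b']$ was already minimal in $K$. If yes, then $[a,b]$ and $[a',b']$ are incomparable, hence non-concurrent in the sense of $\cC$ (otherwise they would be comparable in $K$), so their segments are disjoint; combined with leftmost-ness $a\le a'$, this forces $b<a'\le b'$, and the inequality follows. If no, then any element of $K$ strictly below $[a',b']$ would survive in $K\setminus\{[a,b]\}$ and obstruct minimality unless it equals $[a,b]$; so the only element of $K$ strictly below $[a',b']$ is $[a,b]$ itself, which forces the chain witnessing $[a,b]\prec[a',b']$ to have length one, i.e.\ the two segments are concurrent and hence intersect, directly giving $a\le b'$. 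This case split is the only nontrivial point; the rest is routine bookkeeping about the composition of heaps.
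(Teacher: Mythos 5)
Your proposal is correct and follows essentially the same route as the paper: invert $f$ by iterated extraction of leftmost minimal segments, with injectivity coming from the preceding lemma. The only difference is that you spell out the case analysis proving the inequality $a_i\le b_{i-1}$ (disjointness of incomparable minima versus concurrency of a covering pair), which the paper merely asserts follows from always taking the leftmost minimum; your argument for that step is sound.
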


\subsubsection{Parallelogram Polyominoes} A \emph{parallelogram polyomino} (PP) is a subset of $\rs^2$, defined up to translation, by the region enclosed between two finite paths in $\zs^2$, using East and North steps with common endpoints but which do not otherwise intersect~\cite{mbm-viennot}. We can view such a polyomino as its sequence of columns $C_1,\ldots,C_n$ going from left to right, where $C_{i+1}$ has its bottom (\emph{resp.} top) cell, not lower than the bottom (\emph{resp.} top) cell of $C_i$. An example is given in the left of Figure~\ref{fig:polytoheap} (ignore the dotted lines).

 For $i=1,\ldots ,n$, let $b_i$ be the number of cells of $C_i$, and $a_i$ be the number of common rows between $C_{i-1}$ and $C_i$, where by convention $a_1=1$. As noticed in~\cite{mbm-viennot}, this encoding is a bijection from parallelogram polyominoes with $n$ columns to the subset of ${\mathbf S}_n$ with $a_1=1$. By the results of the previous subsection, $f$ thus induces a bijection between parallelogram polyominoes and \emph{semi-pyramids}: these are pyramids in $\H$ whose maximal piece has the form $[1,b]$. This is precisely~\cite[Proposition 3.4(i)]{mbm-viennot}.\smallskip

We need to slightly extend this correspondence. Define a \emph{pointed PP} as a pair $(P,{c})$ where $P$ is a PP and ${c}$ is a positive integer less than or equal to $b_1$, the height of the first column of $P$. By taking $a_1={c}$, it is then clear that pointed PPs are in bijection with $\mathbf{S}$. We can now define our main objects of study of the present section.

\begin{Definition}\label{def:ppp}
A \emph{periodic parallelogram polyomino} (PPP) is a pointed PP $(P,{c})$ in which ${c}$ is at most the height of $C_n$.
\end{Definition}

Let $\widetilde{\mathbf S}$ be the set of $(a_i,b_i)_{1\leq i\leq n}\in{\mathbf S}$ such that $a_1\leq b_n$. We immediately have the following result.

\begin{Proposition}\label{prop:PPP-S}
The set of PPPs is in bijection with $\widetilde{\mathbf S}$.
\end{Proposition}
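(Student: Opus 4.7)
The plan is to leverage directly the bijection between pointed parallelogram polyominoes and $\mathbf S$ that has just been established in the preceding paragraph, and observe that the PPP condition corresponds exactly to the additional inequality $a_1\leq b_n$ defining $\widetilde{\mathbf S}$.

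More precisely, I would recall that for a pointed PP $(P,{c})$ with columns $C_1,\ldots,C_n$, the encoding $(a_i,b_i)_{1\leq i\leq n}$ is defined by $b_i=$ (height of $C_i$), $a_i=$ (number of common rows between $C_{i-1}$ and $C_i$) for $i\geq 2$, and $a_1={c}$. The authors already remarked that this is a bijection between pointed PPs and $\mathbf S$. Under this encoding, the parameter ${c}$ attached to the marked pointed PP is exactly $a_1$, while the height of the last column $C_n$ is exactly $b_n$. Consequently, the extra condition defining a PPP (namely ${c}\leq$ height of $C_n$, from Definition~\ref{def:ppp}) translates directly into $a_1\leq b_n$, which is precisely the defining condition of $\widetilde{\mathbf S}$.

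The proof therefore reduces to invoking the already-established pointed-PP bijection and restricting both sides to the respective subsets cut out by a single inequality. There is no real combinatorial obstacle: since the bijection on the larger sets preserves both the encoded integer ${c}=a_1$ and the encoded integer $b_n=$ height of $C_n$, the restriction is automatic and immediately yields the claimed bijection between PPPs and $\widetilde{\mathbf S}$. The only thing worth double-checking carefully is the boundary case $n=1$ (a single-column polyomino), where the condition $a_1\leq b_1$ coincides with the condition ${c}\leq b_1$ already required for pointed PPs, so the restriction imposes nothing new there and the correspondence still holds.
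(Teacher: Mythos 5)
Your proposal is correct and is exactly the argument the paper intends: the paper states this proposition with "We immediately have the following result," relying on the already-noted bijection between pointed PPs and $\mathbf S$ via $a_1=c$, so that the PPP condition $c\leq$ height of $C_n$ becomes $a_1\leq b_n$. Your write-up simply makes this one-line restriction explicit, including the harmless $n=1$ check.
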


 An example of a PPP is represented in Figure~\ref{fig:polytoheap}, together with its induced image under $f$. The dashed columns in the picture highlight the \emph{periodic} structure: the mark ${c}$ tells us how to ``glue'' the extreme columns of $P$.

\begin{figure}[!ht]
\includegraphics[width=0.6\textwidth]{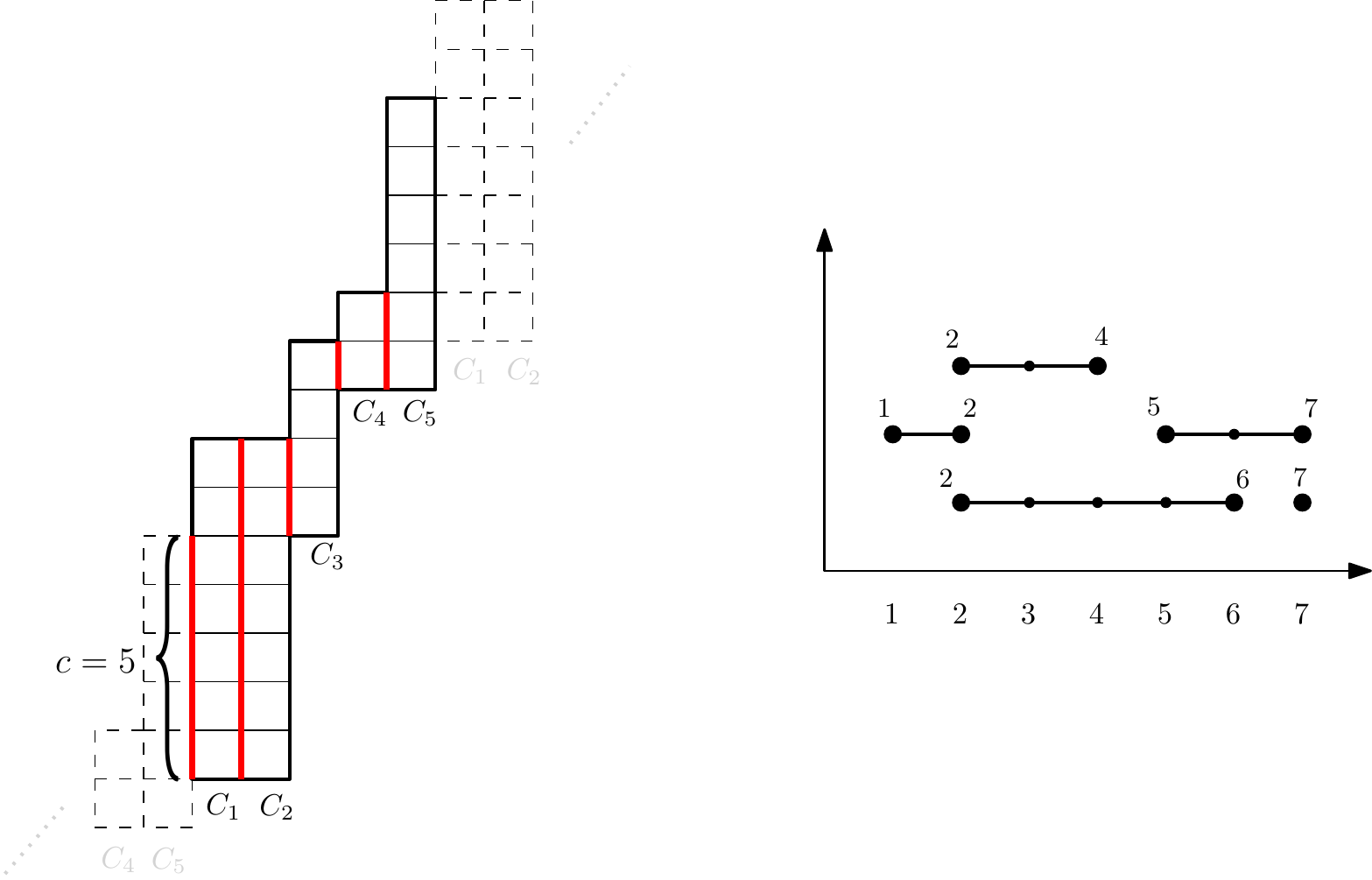}
\caption{A periodic parallelogram polyomino $(P,5)$ of width $n=5$ and its image $f(P)$, with $(a_i,b_i)_{1\leq i\leq 5}=(5,7),(7,7),(2,4),(1,2),(2,6)$.}
\label{fig:polytoheap}
\end{figure}

\begin{Definition}\label{def:Htilde} Let $\Ht\subset \H$ be the set of heaps $H$ such that either $H$ is the empty heap, or $a\leq b'$ where $[a,b]$ is the rightmost maximal segment of $H$ and $[a',b']$ is its leftmost minimal segment.
Equivalently, $H\in\Ht$ if there is no minimal segment of $H$ which occurs completely to the left of a maximal segment. 
\end{Definition}

From the preceding discussion we immediately derive the following result.

\begin{Proposition}\label{prop:bijcyclicheap}
The bijection $f$ restricts to a bijection between $\widetilde{\mathbf S}$ and $\Ht$, and thus induces a bijection between PPPs and $\Ht$.
\end{Proposition}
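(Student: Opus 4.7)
The plan is to deduce Proposition~\ref{prop:bijcyclicheap} as a direct corollary of the lemma preceding Proposition~\ref{prop:pairofsequences}, which pins down the extremal pieces of $f(\seq)$ in terms of the sequence $\seq$. Concretely, for $\seq=(a_i,b_i)_{1\leq i\leq n}\in\mathbf{S}_n$ with $n\geq 1$, that lemma asserts that the rightmost maximal segment of $f(\seq)$ is $[a_1,b_1]$ and that the leftmost minimal segment is $[a_n,b_n]$. Since $f$ is already known to be a bijection between $\mathbf{S}$ and $\H$ by Proposition~\ref{prop:pairofsequences}, the whole statement reduces to matching the defining conditions of the two subsets $\widetilde{\mathbf S}$ and $\Ht$ on both sides.

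For the forward inclusion, I would take a nonempty $\seq\in\widetilde{\mathbf S}$ and observe that its defining inequality $a_1\leq b_n$ becomes, in the heap $f(\seq)$, exactly the inequality $a\leq b'$ relating the rightmost maximal piece $[a,b]=[a_1,b_1]$ to the leftmost minimal piece $[a',b']=[a_n,b_n]$; this is precisely the condition defining $\Ht$ in Definition~\ref{def:Htilde}. The empty sequence maps to the empty heap, and both are included by convention in $\widetilde{\mathbf S}$ and $\Ht$ respectively. The converse is the same computation read backwards: given a nonempty $H\in\Ht$, let $\seq=f^{-1}(H)\in\mathbf{S}$; then the lemma identifies the extremal pieces of $H$ as $[a_1,b_1]$ and $[a_n,b_n]$, so the condition $a\leq b'$ defining $\Ht$ forces $a_1\leq b_n$, placing $\seq$ in $\widetilde{\mathbf S}$.

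The second assertion is then immediate: composing the restricted bijection $f\colon \widetilde{\mathbf S}\to \Ht$ with the bijection between PPPs and $\widetilde{\mathbf S}$ provided by Proposition~\ref{prop:PPP-S} yields the claimed bijection between PPPs and $\Ht$.

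There is essentially no genuine obstacle here; the entire content of the proposition has been prepackaged into the lemma identifying extremal pieces of $f(\seq)$, and the argument amounts to translating definitions. The only mildly delicate point to spell out is the consistency of conventions on the empty heap, which is built into Definition~\ref{def:Htilde} and matches the empty sequence on the $\mathbf{S}$-side by convention.
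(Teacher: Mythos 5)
Your argument is correct and is exactly the paper's (the paper simply says the result follows ``from the preceding discussion''): the lemma identifying $[a_1,b_1]$ and $[a_n,b_n]$ as the rightmost maximal and leftmost minimal segments of $f(\seq)$ translates the condition $a_1\leq b_n$ defining $\widetilde{\mathbf S}$ into the condition defining $\Ht$, and composing with Proposition~\ref{prop:PPP-S} gives the bijection with PPPs. No issues.
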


We have arrived at a description of PPPs in terms of heaps, which will be the base for our enumeration in the next subsection.

\subsection{Enumeration of PPPs}

\subsubsection{Weights} We follow three natural statistics on PPPs. Let $(P,{c})$ be such a polyomino, and $\seq=(a_i,b_i)_{1\leq i\leq n}\in \widetilde{\mathbf S}$ its associated sequence. Its \emph{width} is the number $n$ of columns of $P$, while its \emph{area} is the number of cells in $P$, which can be computed as $\sum_i b_i$. Its \emph{height} is the height of $P$ as a parallelogram polyomino \emph{minus }$a_1(=c)$, so that the height is given by $\sum_i(b_i-a_i)$. The PPP of Figure~\ref{fig:polytoheap} has width $5$, height $9$ and area $26$.  Our goal is to count PPPs according to the weight $w(P,c)=x^\text{height}y^\text{width}q^\text{area}$.
\smallskip

 Given a basic segment $s=[a,b]$, let its weight be defined as $v(s)=x^{b-a}yq^b$. For a heap $H\in \H$, the induced weight $v(H)=\prod_{s\in H}v(s)$ is then given by
\begin{equation}
\label{eq:heapweight}
v(H)=x^{\ell(H)}y^{|H|}q^{e(H)},
\end{equation}
where  $\ell(H)$ is the sum of the lengths of all segments, $|H|$ is the number of segments in $H$ and $e(H)$ is the sum of the values of all right endpoints of segments.

We have finally that if $H$ is the heap corresponding to $(P,{c})$ by Proposition~\ref{prop:bijcyclicheap}, then the weights match: $v(H)=w(P,{c})$. We have thus transformed the problem of enumerating PPPs to the problem of enumerating heaps in $\Ht$: let $PPP(x,y,q)$ and $PP(x,y,q)$ be the generating functions of PPPs and PPs respectively with respect to the weight $w$. Then we have 
\begin{equation}
\label{eq:polyominoes_as_heaps}
PPP(x,y,q)=\sum_{H\in\Ht}v(H)\quad\mbox{and}\;\;\;\; PP(x,y,q)=\sum_{H\in\SP}v(H),
\end{equation}
where $\SP$ is the set of semi-pyramids defined above.

\subsubsection{Results}  Denote by $\mathcal{T}\subset \H$ the set of trivial heaps and  by $\mathcal{T}_{>1}$ its subset of heaps containing no segment of the form $[1,b]$. Introduce their signed generating functions
  \[
 N(x,y,q):=\sum_{T\in \mathcal{T}} (-1)^{|T|}v(T)\quad\mbox{and}\;\;\;\; \hat{N}(x,y,q):=\sum_{T\in \mathcal{T}_{>1}} (-1)^{|T|}v(T).
  \]

The encoding of polyomino parallelograms as semi-pyramids, and the use of the Inversion Lemma~\ref{lem:inversion} imply:
\begin{equation}\label{eq:enumPP}
PP(x,y,q)=-x\frac{\hat{N}(x,y,q)}{N(x,y,q)}.
\end{equation}

Numerator and denomimator were computed bijectively in~\cite[Proposition 4.1]{mbm-viennot}:
 \begin{equation}
\label{eq:trivialheaps}
\hat{N}(x,y,q)=\sum_{n\geq 1}\frac{(-y)^nq^{\bi{n+1}{2}}}{(q;q)_{n-1}(xq;q)_n}\;\;\;\;\mbox{and}\;\;\;\;N(x,y,q)=\sum_{n\geq 0}\frac{(-y)^nq^{\bi{n+1}{2}}}{(q;q)_n(xq;q)_n}.
\end{equation}

We have for PPPs a result similar to~\eqref{eq:enumPP}.

\begin{Theorem}\label{thm:enumPPP}
The generating function $PPP(x,y,q)$ for periodic parallelogram polyominoes is given by
\[PPP(x,y,q)=-y\frac{\partial_y N(x,y,q)}{N(x,y,q)}.\]
\end{Theorem}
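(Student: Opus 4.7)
The plan is to transfer the problem to heaps via Proposition~\ref{prop:bijcyclicheap} and then adapt, to the family $\Ht$, the double-counting argument Viennot used in the proof of the Inversion Lemma~\ref{lem:inversion}. Since $PPP(x,y,q)=\sum_{H\in\Ht\setminus\{\emptyset\}}v(H)$ by~\eqref{eq:polyominoes_as_heaps}, and since $-y\,\partial_y N=\sum_{T\in\cT}(-1)^{|T|+1}|T|v(T)$ by direct differentiation (recall $v(T)=x^{\ell(T)}y^{|T|}q^{e(T)}$), the theorem is equivalent to the identity
\[
N(x,y,q)\cdot\sum_{H\in\Ht\setminus\{\emptyset\}}v(H)\;=\;\sum_{T\in\cT}(-1)^{|T|+1}|T|\,v(T).
\]
Expanding the left-hand side as $\sum_{(T,H)\in\cT\times(\Ht\setminus\{\emptyset\})}(-1)^{|T|}v(T)v(H)$ and regrouping via the map $\phi(T,H)=T*H=H_0$, exactly as in Viennot's proof, reduces the identity to showing that for every $H_0\in\H$,
\[
\Theta(H_0):=\sum_{\substack{T\subseteq\Min(H_0)\\H_0\setminus T\in\Ht\setminus\{\emptyset\}}}(-1)^{|T|}=\begin{cases}(-1)^{|H_0|+1}|H_0|,&\text{if }H_0\in\cT,\\ 0,&\text{otherwise.}\end{cases}
\]

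The trivial case is an immediate check. If $H_0\in\cT$ has $n$ pieces, every subheap $H_0\setminus T$ is again trivial, and Definition~\ref{def:Htilde} forces a trivial heap to belong to $\Ht$ only when it is empty or reduced to a single segment, since any two disjoint segments in a trivial heap automatically violate the $\Ht$ condition. Consequently only the $n$ subsets $T$ of size $n-1$ contribute to $\Theta(H_0)$, yielding $\Theta(H_0)=n(-1)^{n-1}=(-1)^{n+1}n$, as required; the marginal cases $n=0$ and $n=1$ are verified directly.

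The remaining and hardest step is to prove $\Theta(H_0)=0$ for non-trivial $H_0$, which I plan to obtain via a sign-reversing involution on the indexing set $\{T\subseteq\Min(H_0):H_0\setminus T\in\Ht\setminus\{\emptyset\}\}$. The natural candidate is the toggle $T\mapsto T\triangle\{s\}$ for a canonically chosen minimum piece $s\in\Min(H_0)$; non-triviality of $H_0$ guarantees the existence of a non-isolated minimum, providing a distinguished candidate such as the leftmost non-isolated minimum. The delicate point—and what I expect to be the main technical obstacle—is that membership in $\Ht$ is governed by the left endpoint of the rightmost maximal segment and the right endpoint of the leftmost minimal segment of $H_0\setminus T$, and both of these quantities may shift when $s$ is added to or removed from $T$. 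The choice of $s$ must therefore be calibrated so that these two extremal positions are either unaffected or affected in a compatible way, ensuring that the toggle is well-defined on the whole indexing set and indeed changes $|T|$ by one. Once a suitable $s$ is identified and the involution is checked to preserve the $\Ht$-condition, the identity $\Theta(H_0)=0$ follows, and summing over $H_0\in\H$ recovers $-y\,\partial_y N(x,y,q)$, completing the proof of Theorem~\ref{thm:enumPPP}.
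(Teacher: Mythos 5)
Your overall framework coincides with the paper's: both arguments transfer the problem to $\Ht$ via Proposition~\ref{prop:bijcyclicheap}, run the double counting $N\cdot PPP=\sum_{H_0\in\H}v(H_0)\,\Theta(H_0)$ through the map $\phi(T,H)=T*H$, and identify the trivial-heap contribution with $-y\,\partial_y N$ exactly as you do. The gap lies in the remaining step, and it is not merely that you left the involution unconstructed: the statement you reduce to, namely $\Theta(H_0)=0$ for \emph{every} non-trivial $H_0$, is false, so no sign-reversing involution on the index set $\{T\subseteq\Min(H_0):\,H_0\setminus T\in\Ht\}$ for a fixed $H_0$ can exist. Concretely, take $H_0=[1,3]*[1,1]*[3,3]$, i.e.\ the segment $[1,3]$ with the two monomers $[1,1]$ and $[3,3]$ sitting on top of it. Then $\Min(H_0)=\{[1,3]\}$; for $T=\emptyset$ the heap $H_0$ lies in $\Ht$ (its rightmost maximal segment $[3,3]$ has left endpoint $3$ and its unique minimal segment has right endpoint $3$), while for $T=\{[1,3]\}$ the remaining trivial heap $\{[1,1],[3,3]\}$ does not. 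Hence $\Theta(H_0)=1\neq 0$, and your proposed toggle is not even well defined on the index set here.

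What is actually true is only that $\sum_{H_0\notin\cT}v(H_0)\,\Theta(H_0)=0$, and proving this requires cancellation \emph{between different heaps} of equal weight; this is where the real work of the paper lies. Lemma~\ref{lem:imagephi} shows that for non-trivial $H_0$ the admissible sets $T$ form a Boolean interval $[U_1(H_0),U_2(H_0)]$; when $U_1\neq U_2$ a toggle of the kind you describe does apply and $\Theta(H_0)=0$, but the degenerate heaps with $U_1=U_2$ (such as the example above) each contribute $\pm v(H_0)$. Their total contribution is killed by the weight-preserving, sign-reversing involution $\psi_0/\psi_1$ of Lemma~\ref{lem:psi_bijective}, which exchanges the right endpoints of two carefully chosen segments and thereby pairs each such heap with a \emph{different} heap of the same weight and opposite sign: in the example, $H_0$ is paired with $[1,1]*[1,3]*[3,3]$, for which $\Theta=-1$. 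So your proposal correctly isolates the hard step, but the calibration you hope for cannot succeed as described; the argument has to leave the fixed-$H_0$ setting.
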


We prove this result in the rest of this section, before giving a bijective proof of Theorem~\ref{thm:A-tA} in Section~\ref{subsec:BackTo321}.

\subsubsection{Counting heaps in $\Ht$
}

The proof of Theorem~\ref{thm:enumPPP} relies on the Inversion Lemma, together with some technical combinatorial results.

Consider any nontrivial heap $F\in \H\setminus\mathcal T$. We need to define certain special pieces of $F$ to answer the following question in Lemma~\ref{lem:imagephi}: what are the possible subsets of minima that can be removed from $F$ to obtain a heap in $\Ht$ ? The reader is advised to take a look at Figure~\ref{fig:heaps_parameters} for an illustration of the definitions.

 Define $S_F=:[a_F,b_F]$ to be the rightmost segment in $\Max(F)\setminus \Min(F)$, and $Y_F$ as the set of segments $[a,b]\in \Max(F)$ which satisfy $a>b_F$. By definition of $S_F$, note that $Y_F\subset \Min(F)$. Let $X_F$ be the set of segments $[a,b]\in \Min(F)$ satisfying $b<a_F$.  We then define $U_1(F):=X_F\sqcup Y_F\subseteq \Min(F)$.

 Let $F'=F\setminus\Min(F)$. Suppose that $F'\notin\Ht$ and that there exists $S_0\in\Min(F)$ such that $S_0*F'\in\Ht$: in this case $F$ is said to be of \emph{type $0$}. Otherwise, it is said to be of \emph{type $1$}. Note that $S_0$ is unique if it exists, since its endpoints $[a_0,b_0]$ necessarily satisfy $a_0<a_F\leq b_0$ and at most one minimal element can satisfy these inequalities. If $F$ is of type $0$ (\emph{resp.} of type $1$) we define $U_2(F):=\Min(F)\setminus \{S_0\}$   (\emph{resp.}  $U_2(F):=\Min(F)$).

\begin{figure}[!ht]
\includegraphics[width=0.6\textwidth]{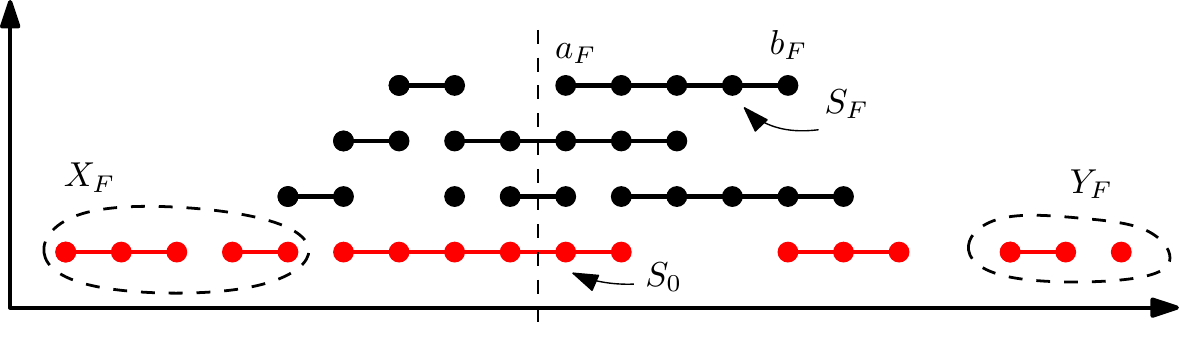}
\caption{Special pieces of a heap needed in Lemma~\ref{lem:imagephi}.}
\label{fig:heaps_parameters}
\end{figure}
We wish to apply the general principle of the Inversion Lemma~\ref{lem:inversion} in order to enumerate $\Ht$. Given any $T\in\mathcal{T}$ and $E\in\H$ we defined $\phi(T,E)=T*E$, (see Section~\ref{subsec:enumheaps}). We need to determine the image of $\phi$ when it is restricted to $\mathcal T\times\Ht$, and the antecedents of any element in this image.

\begin{Lemma}
\label{lem:imagephi}
 A heap $F$ belongs to $\phi( \mathcal T\times\Ht)$ if, and only if one of the following occurs:
\begin{enumerate}
\item  $F\in \mathcal T$. In this case if $U\subseteq \Min(F)$, then $F\setminus U\in\Ht$ iff $\left|F\setminus U\right|=1$.
\item  $F\not\in \mathcal T$ and $F\setminus U_1(F)\in\Ht$. In this case if $U\subseteq \Min(F)$, then $F\setminus U\in\Ht$ iff  $U_1(F)\subseteq U\subseteq U_2(F)$.
\end{enumerate} 
\end{Lemma}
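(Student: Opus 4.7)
My plan is to characterize directly, for each heap $F$ and subset $U \subseteq \Min(F)$, when $F \setminus U \in \Ht$; then $F$ belongs to $\phi(\mathcal{T} \times \Ht)$ iff such a $U$ exists, since any subset of $\Min(F)$ is automatically trivial (two distinct minima of a heap are non-concurrent, otherwise they would be comparable). Throughout I use two elementary facts: removing minima creates no new maxima, so $\Max(F \setminus U) = \Max(F) \setminus U$; and by Definition~\ref{def:Htilde}, a nonempty heap lies in $\Ht$ iff its leftmost minimum has right endpoint at least the left endpoint of its rightmost maximum.

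For case 1 ($F$ trivial), $F \setminus U$ is also trivial, so its segments are pairwise non-intersecting: as soon as it contains two or more pieces, the leftmost one ends strictly before the rightmost one begins, violating the $\Ht$-condition.

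For case 2 ($F \notin \mathcal{T}$), I first establish the necessity $U_1(F) \subseteq U$. Since $S_F \in \Max(F) \setminus \Min(F)$, we have $S_F \notin U$, so $S_F$ remains a maximum of $F \setminus U$; any $X_F$-piece not in $U$ is then a minimum of $F \setminus U$ completely to the left of $S_F$, forcing $X_F \subseteq U$. For $Y_F$, two survivors would obstruct each other (they are pairwise non-intersecting min-max pieces, one entirely to the left of the other); a lone survivor $A \in Y_F$ would become the rightmost maximum of $F \setminus U$, strictly to the right of $S_F$, and every minimum of $F \setminus U$ sitting in the downward cone of $S_F$ -- whether a surviving minimum of $F$ below $S_F$, or $S_F$ itself once all its predecessors have been removed, or a newly exposed piece of $F'$ in this cone -- cannot intersect $A$ (else $A$ would fail to be maximal) and is therefore completely to the left of $A$, producing a violation. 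Thus $Y_F \subseteq U$.

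Assuming now $U \supseteq U_1(F)$, the rightmost max of $F \setminus U$ is $S_F$, and the $\Ht$-condition reduces to: no minimum of $F \setminus U$ has right endpoint $< a_F$. The surviving minima of $F$ all satisfy this by definition of $X_F$, so any obstruction must come from a newly exposed minimum of $F \setminus U$, which one verifies is necessarily a minimum of $F' = F \setminus \Min(F)$. In type 1 ($F' \in \Ht$), every minimum of $F'$ has right endpoint $\geq a_F$, so no obstruction ever arises and every $U$ with $U_1(F) \subseteq U \subseteq \Min(F) = U_2(F)$ is admissible. In type 0, there exists a minimum $p$ of $F'$ with right endpoint $< a_F$; since the $F$-covers of $p$ are minima of $F$ intersecting $p$, each such cover either lies in $X_F \subseteq U_1(F)$ or contains the abscissa $a_F$ (a case yielding a unique minimum of $F$, because two distinct minima both containing a common point would be concurrent), so the unique cover of $p$ outside $U_1(F)$ is the minimum $S_0$ from the type 0 definition. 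Consequently $p$ becomes a minimum of $F \setminus U$ precisely when $S_0 \in U$, and the admissible $U$ are exactly those in $[U_1(F), \Min(F) \setminus \{S_0\}] = [U_1(F), U_2(F)]$. The main obstacle in this plan will be the careful bookkeeping of which non-minimal pieces of $F$ become minimal in $F \setminus U$, together with the uniqueness argument for $S_0$ based on the non-concurrency of distinct minima.
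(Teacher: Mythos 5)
Your proof is correct and follows essentially the same route as the paper's: you show that any admissible $U$ must contain $X_F\sqcup Y_F=U_1(F)$, that the only further obstructions are newly exposed minima of $F'=F\setminus\Min(F)$ lying strictly left of $a_F$, and that these are controlled by the unique minimum of $F$ containing the abscissa $a_F$, i.e.\ by $S_0$. If anything, your handling of the $Y_F$ step and of the identity $\Max(F\setminus U)=\Max(F)\setminus U$ is more explicit than in the paper, while the identification of the critical cover with $S_0$ is left at the same level of terseness as the paper's own argument.
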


\begin{proof}
Assume $F=T*E$ where $E\in \Ht$. 

 If $F$ is trivial, then a fortiori $E$ is trivial. Since $E$ is in $\Ht$ by hypothesis, the only possibility is that $E$ is reduced to a single segment, from which case (1) follows.

Now assume $F$ is non trivial. We first need to determine which pieces of $\Min(F)$ necessarily come from $T$. First, no piece in $Y_F$ can come from $E$: otherwise the rightmost maximum of $E$ is in $Y_F$, but then any non-minimal piece of $F$ contradicts the fact that $E$ is in $\Ht$. We infer immediately that \emph{$S_F$ is the rightmost maximum of $E$}. By the definition of $\Ht$, this implies that the minimal segments in $X_F$ cannot come from $E$ either. We have proved that, if we let $U$ be the image of $T$ in $F$, $U_1(F)\subseteq U$. 

In other words, $E$ is obtained from $F$ by removing a certain subset $U$ of $\Min(F)$ which contains $U_1(F)$. Note first that $F\setminus U_1(F)\in\Ht$. Then $E\in\Ht$ means that no minimum element of $F\setminus U$ occurs strictly left of $S_F$. Such a bad minimum could occur if, and only if $F$ has type $0$ and the segment $S_0$ is removed. This shows that one cannot have $S_0\in U$, which concludes the proof. 

\end{proof}

 We now follow the steps of proof of the Inversion Lemma~\ref{lem:inversion}:
 \begin{align*}
N(x,y,q)\, PPP(x,y,q)&=\left( \sum_{T\in \mathcal{T}} (-1)^{|T|}v(T)\right)\left(\sum_{E\in \Ht} v(E)\right)\\
&=\sum_{F\in\phi( \mathcal T\times\Ht)                      }v(F)\left(\sum_{U\subseteq\min(F),F\setminus U\in\Ht}(-1)^{|U|}\right)\\
&= \sum_{F\in\mathcal{T}}|F|(-1)^{|F|-1}v(F)+\sum_{F\notin \mathcal{T}, F\setminus U_1(F)\in\Ht}v(F)\left(\sum_{U_1(F)\subseteq U\subseteq U_2(F)}(-1)^{|U|}\right),
\end{align*}
where we used Lemma~\ref{lem:imagephi} in the last equality. The first term of the last expression is by inspection equal to $-y\partial_y N(x,y,q)$, so the proof of  Theorem~\ref{thm:enumPPP} will follow if one can prove that the second sum is equal to zero. Notice that if $F$ satisfies $U_1(F)\neq U_2(F)$ in this sum, then the inner sum is zero, so we can discard these heaps. We are thus led to consider the following set of heaps. 

\begin{Definition}
Let $\mathcal{W}$ be the set of nontrivial heaps such that $U_1(F)=U_2(F)$ and $F\setminus U_1(F)\in\Ht$.
\end{Definition}

 The preceding discussion shows that Theorem~\ref{thm:enumPPP} is a direct consequence of the following formula:
\begin{equation}
\label{eq:technical_vanishing}
\sum_{F\in\mathcal{W}}v(F)(-1)^{|U_1(F)|}=0.
\end{equation}


\subsubsection{Proof of Equation~\eqref{eq:technical_vanishing}}

  Let $\mathcal{W}_0$ and $\mathcal{W}_1$ denote the heaps of $\mathcal{W}$ of type $0$ and $1$ respectively. Notice that $U_1(F)=\Min(F)\setminus \{S_0(F)\}$ if $F\in\mathcal{W}_0$, while $U_1(F)=\Min(F)$ if $F\in\mathcal{W}_1$.

We have the following technical lemma.

\begin{Lemma}
\label{lem:CharacterizationW}
Let $F\in \mathcal{W}$, and $F':=F\setminus \Min(F)$.

If $F\in \mathcal{W}_0$, then all the minima of $F'$ are concurrent with $S_0(F)$. We denote by $S'_0(F)$ the leftmost such minimum. 

If $F\in \mathcal{W}_1$, then $F'$ has a unique minimum denoted by $S'_1(F)=:[a'_1,b'_1]$. If $S_1(F):=[a_1,b_1]$ is the rightmost minimum in $X_F$, then $a'_1\leq b_1$ and $a_1\leq b'_1$.
\end{Lemma}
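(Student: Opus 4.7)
My approach rests on two preliminary observations. First, a minimum of $F$ cannot sit above any element (a minimum has nothing below, by definition), so removing the minima creates no new maxima: $\Max(F')=\Max(F)\setminus\Min(F)$, and in particular $S_F$ is still the rightmost maximum of $F'$. Second, if $m$ is a minimum of $F'$ but not of $F$, then some $m^*\in\Min(F)$ is directly concurrent with $m$ and sits below it in $F$: indeed, in any chain of concurrences from an element of $\Min(F)$ to $m$ (which exists since $m\notin\Min(F)$), the piece just before $m$ cannot belong to $F'$, for otherwise $m$ would not be minimal in $F'$. Since $Y_F\subset\Max(F)$ cannot lie below anything, such an $m^*$ belongs to $X_F\cup\{S_0\}$ when $F\in\mathcal{W}_0$, and to $X_F$ when $F\in\mathcal{W}_1$.

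\textbf{Part 1 ($F\in\mathcal{W}_0$).} Here $U_1(F)=U_2(F)$ reads $\Min(F)=X_F\sqcup Y_F\sqcup\{S_0\}$. I argue by contradiction: suppose $m=[p,q]$ is a minimum of $F'$ not concurrent with $S_0=[a_0,b_0]$. By the preliminary, some $m^*=[\alpha,\beta]\in\Min(F)$ is directly concurrent with $m$ and below it; it cannot equal $S_0$, so $m^*\in X_F$, forcing $\beta<a_F$. If $p>b_0$, then concurrency with $m^*$ yields $p\le\beta<a_F\le b_0$, a contradiction. So we must have $q<a_0$, but then $m$ stays minimal also in $S_0*F'$ (since $m$ is non-concurrent with $S_0$ and nothing in $F'$ sits below $m$), while $q<a_0<a_F$ places $m$ strictly to the left of $S_F$, contradicting $S_0*F'\in\Ht$. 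Consequently every minimum of $F'$ is concurrent with $S_0$, and the leftmost such minimum is well defined.

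\textbf{Part 2 ($F\in\mathcal{W}_1$).} Now $X_F\cup Y_F=\Min(F)$ and $F\setminus U_1(F)=F'\in\Ht$. For any minimum $m=[p,q]$ of $F'$, the preliminary produces $m^*=[\alpha,\beta]\in X_F$ directly concurrent with $m$ and below it, so $p\le\beta<a_F$; on the other hand, $F'\in\Ht$ together with $S_F$ being the rightmost maximum of $F'$ forces $q\ge a_F$. Thus every minimum of $F'$ contains the integer $a_F$, so any two distinct such minima would intersect and be comparable in $F'$, contradicting both being minimal. Hence $F'$ has a unique minimum $S'_1(F)=[a'_1,b'_1]$. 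For the inequalities, I use that distinct minima of $F$ are non-concurrent (concurrent pieces in a heap are comparable), hence correspond to pairwise disjoint intervals; the elements of $X_F$ are therefore linearly ordered and $S_1$ is the one with the largest right endpoint, so $\beta\le b_1$, which combined with $a'_1\le\beta$ gives $a'_1\le b_1$. The second inequality $a_1\le b'_1$ follows from $a_1\le b_1<a_F\le b'_1$.

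The main obstacle is the case $q<a_0$ of Part 1, where the type-$0$ hypothesis must be used non-trivially: one needs the full strength of $S_0*F'\in\Ht$, not merely $F'\notin\Ht$. Once this subtle point is settled, the uniqueness in Part 2 and the two inequalities are short consequences of the two preliminary observations.
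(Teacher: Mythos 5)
Your proof is correct and follows essentially the same route as the paper's: every minimum of $F'$ must lie directly above some minimum of $F$, which is then forced into $X_F$ (or to be $S_0$), and the membership of $F\setminus U_1(F)$ in $\Ht$ rules out minima of $F'$ lying entirely left of $S_F$. You simply spell out in full the concurrency-chain and interval-arithmetic details that the paper's four-sentence proof leaves implicit.
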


\begin{proof}
Suppose $F\in \mathcal{W}_0$, so that $\Min(F)=X_F\sqcup Y_F\sqcup\{S_0(F)\}$. If a minimum of $F'$ was left of $S_0(F)$ then this would contradict $\{S_0(F)\}*F'\in\Ht$; if a minimum of $F'$ was on the right of $S_0(F)$ then it would be concurrent with $Y_F$ which contradicts the definition of $Y_F$. 

Suppose $F\in \mathcal{W}_1$, so that $\Min(F)=X_F\sqcup Y_F$. Let $[a'_1,b'_1]$ be an element of $\Min(F')$. With the same reasoning as above, one shows that $a'_1\leq b_1$, and $a_F\leq b'_1$ and the uniqueness of $S'_1(F)$ follows.  Since $a_1\leq a_F$, the wanted equalities are also proven.
\end{proof}

We can now define two functions $\psi_0$ and $\psi_1$ on $\mathcal{W}$. These are constructed by selecting two special segments in a heap and then exchanging their right endpoints, see Figure~\ref{fig:involution_psi} for an illustration. 

\begin{figure}[!ht]
\includegraphics[width=0.8\textwidth]{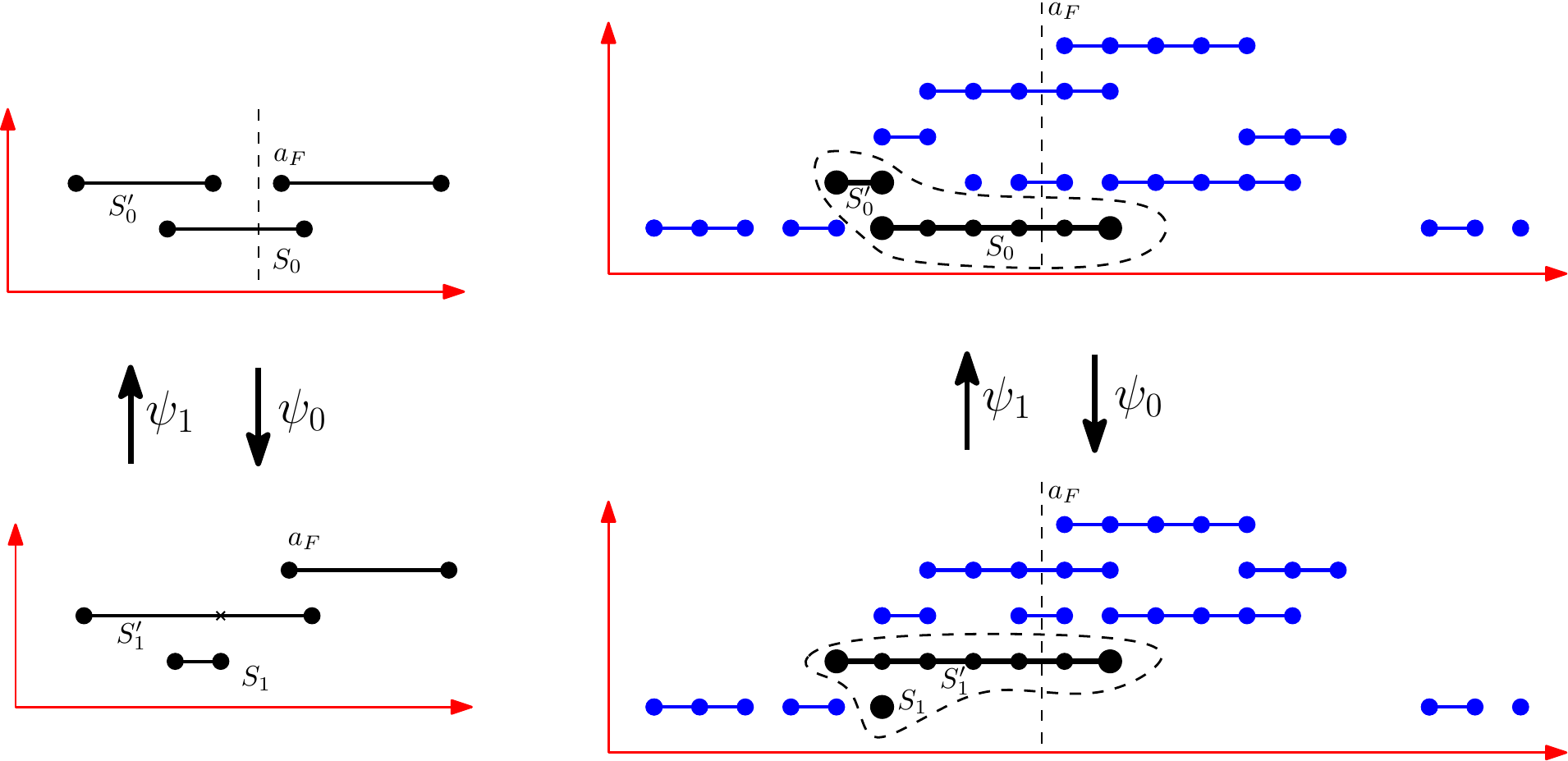}
\caption{Examples of the functions $\psi_i$: a simple case (left) and a more substantial one (right).  \label{fig:involution_psi}}
\end{figure}

 Suppose $F$ is in $\mathcal{W}_0$, and let $S_0(F)=[a_0,b_0]$ and $S'_0(F)=[a'_0,b'_0]\in \min(F')$ be as in Lemma~\ref{lem:CharacterizationW}. Loosely speaking, $\psi_0(F)$ is obtained by replacing these two pieces in $F$ by $[a_0,b'_0]$ and $[a'_0,b_0]$ respectively. Precisely, let $A=\Min(F)\setminus [a_0,b_0]$ and $B=F'\setminus [a'_0,b'_0]$. Then one has the decomposition $F=A*[a_0,b_0]*[a'_0,b'_0]*B$ and we set
\[\psi_0(F):=A*[a_0,b'_0]*[a'_0,b_0]*B.\]

Suppose $F$ is in $\mathcal{W}_1$, and let $S_1(F)=[a_1,b_1]$ and  $S'_1(F)=[a'_1,b'_1]\in \min(F')$ as in Lemma~\ref{lem:CharacterizationW}. Loosely speaking, $\psi_1(F)$ is obtained by replacing these two pieces in $F$ by $[a_1,b'_1]$ and $[a'_1,b_1]$ respectively. Precisely, let $A=\Min(F)\setminus [a_1,b_1]$ and $B=F'\setminus [a'_1,b'_1]$. Then one has the decomposition $F=A*[a_1,b_1]*[a'_1,b'_1]*B$ and we set 
\[\psi_1(F):=A*[a_1,b'_1]*[a'_1,b_1]*B.\]

\begin{Lemma}
\label{lem:psi_bijective}
The function $\psi_0$ is a bijection from $\mathcal{W}_0$ to $\mathcal{W}_1$ whose inverse is $\psi_1$.
\end{Lemma}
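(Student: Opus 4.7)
The plan is to verify directly that $\psi_0$ sends $\mathcal{W}_0$ into $\mathcal{W}_1$, that $\psi_1$ sends $\mathcal{W}_1$ into $\mathcal{W}_0$, and that the two maps are mutually inverse. The argument is essentially combinatorial bookkeeping, with one crucial structural input: the ``swap'' of right endpoints preserves the concurrency graph because the two pairs involved cover the same interval of abscissas.

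More precisely, I would first establish a preliminary observation. For $F \in \mathcal{W}_0$, Lemma~\ref{lem:CharacterizationW} gives $a'_0 \leq b_0$ and $a_0 \leq b'_0$, so $[a_0,b'_0]$ and $[a'_0,b_0]$ are legitimate segments, still concurrent with each other, and the union $[a_0,b_0] \cup [a'_0,b'_0]$ as sets of integers equals $[a_0,b'_0] \cup [a'_0,b_0]$ (both are the full interval from $\min(a_0,a'_0)$ to $\max(b_0,b'_0)$, with no gap thanks to the concurrency). Consequently any third segment in $F$ is concurrent with the old pair iff it is concurrent with the new pair, so $\psi_0(F) = A * [a_0,b'_0] * [a'_0,b_0] * B$ is a well-defined heap with the same underlying concurrency poset as $F$, up to relabeling of the two swapped pieces. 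The analogous statement holds for $\psi_1$ on $\mathcal{W}_1$ by the corresponding inequalities $a_1 \leq b'_1$ and $a'_1 \leq b_1$.

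Next I would check that $\psi_0(F) \in \mathcal{W}_1$ when $F \in \mathcal{W}_0$, and symmetrically for $\psi_1$. Because the concurrency graph is unchanged, $S_{\psi_0(F)}$ remains $S_F = [a_F,b_F]$. The two swapped pieces redistribute as follows: since $a_0 < a_F$, the new piece $[a_0,b'_0]$ has left endpoint $< a_F$, and one checks using $b'_0 < a_F$ (which must be established from the definition of $S'_0(F)$ as a minimum of $F'$ lying strictly below $S_F$) that $[a_0,b'_0]$ joins $X_{\psi_0(F)}$; meanwhile $[a'_0,b_0]$ now straddles $a_F$ but loses its role as a minimum because there is no longer any minimum of the new $F'$ to which it connects uniquely. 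Hence $\psi_0(F)$ is of type $1$ with $\Min(\psi_0(F)) = X_F \sqcup \{[a_0,b'_0]\} \sqcup Y_F$, so $U_1 = U_2 = \Min(\psi_0(F))$ and $\psi_0(F) \in \mathcal{W}_1$. A parallel analysis handles $\psi_1(F') \in \mathcal{W}_0$ for $F' \in \mathcal{W}_1$.

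Finally, to verify $\psi_1 \circ \psi_0 = \operatorname{id}$, I would identify in $\psi_0(F)$ the piece $S_1 = [a_0,b'_0]$ as the rightmost minimum of $X_{\psi_0(F)}$ and $S'_1 = [a'_0,b_0]$ as the unique minimum of the residue $(\psi_0(F))'$; the right-endpoint swap then returns $[a_0,b_0]$ and $[a'_0,b'_0]$, recovering $F$. The symmetric computation gives $\psi_0 \circ \psi_1 = \operatorname{id}$. The main obstacle is the bookkeeping in the middle step: showing that $[a_0,b'_0]$ becomes the \emph{rightmost} element of $X_{\psi_0(F)}$, that $[a'_0,b_0]$ becomes the \emph{unique} minimum of $(\psi_0(F))'$, and that no spurious minimum of $F'$ migrates into $\Min(\psi_0(F))$. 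All of this hinges on using that $[a_0,b_0]$ was the unique minimum of $F$ with $a_0 < a_F \leq b_0$ together with the ``leftmost'' choice of $[a'_0,b'_0]$; these facts are exactly what Lemma~\ref{lem:CharacterizationW} is designed to deliver.
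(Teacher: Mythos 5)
Your overall strategy is the paper's: show $\psi_0(\mathcal{W}_0)\subseteq\mathcal{W}_1$ and $\psi_1(\mathcal{W}_1)\subseteq\mathcal{W}_0$ by tracking the distinguished segments, then read off that the two maps are mutually inverse. However, the structural claim you use to get this off the ground is false: the endpoint swap does \emph{not} preserve the concurrency poset of the heap up to relabelling the two swapped pieces. Knowing that a third segment meets the union $[a_0,b_0]\cup[a'_0,b'_0]=[a_0,b'_0]\cup[a'_0,b_0]$ only tells you it is concurrent with \emph{some} member of each pair; the poset depends on \emph{which} member, and that can change. Concretely, take $F=[2,6]*[1,3]*[5,7]$: one checks that $F\in\mathcal{W}_0$ with $S_0(F)=[2,6]$, $S'_0(F)=[1,3]$ and $S_F=[5,7]$. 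Its Hasse diagram is a ``V'' (the minimum $[2,6]$ below the two incomparable maxima $[1,3]$ and $[5,7]$), whereas $\psi_0(F)=[2,3]*[1,6]*[5,7]$ is a three-element chain; in particular $|\Max(F)|=2$ while $|\Max(\psi_0(F))|=1$, so the posets are not isomorphic. The segment $[5,7]$ was concurrent with $S_0(F)$ but not with $S'_0(F)$ before the swap, and is concurrent with $[a'_0,b_0]=[1,6]$ but not with $[a_0,b'_0]=[2,3]$ after it: covering relations migrate between the two swapped pieces.

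This matters because your next steps --- ``Because the concurrency graph is unchanged, $S_{\psi_0(F)}$ remains $S_F$'' and the description of how the minima redistribute --- are derived from that false claim. The conclusions happen to be true, but they need a direct argument: one must verify by hand that $\Min(\psi_0(F))=(\Min(F)\setminus\{[a_0,b_0]\})\cup\{[a_0,b'_0]\}$ (using that $[a_0,b'_0]\subseteq[a_0,b_0]$ stays disjoint from the other minima, that $[a'_0,b_0]$ lies above $[a_0,b'_0]$, and that no piece of $B$ can be concurrent with $[a_0,b_0]$ without being concurrent with $[a_0,b'_0]$ or $[a'_0,b_0]$), and separately that $S_F$ is still the rightmost non-minimal maximum even though $\Max$ itself may change (in the example above $[1,3]\in\Max(F)$ but its replacement $[1,6]$ is not maximal). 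This verification is exactly the content of the paper's one-line assertion about the minima of $\psi_0(F)$; your proposal replaces it with an invalid isomorphism claim, so as written it has a genuine gap. The remainder of your outline --- the inequality $b'_0<a_F$ deduced from $F'\notin\Ht$, the identification of $S_1$ and $S'_1$ in the image, and the final inversion check --- is sound once this step is repaired.
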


\begin{proof}
Let $F$ be in $\mathcal{W}_0$, and write $S_0(F)=[a_0,b_0]$ and $S'_0(F)=[a'_0,b'_0]$.  Then the minima of $\psi_0(F)$ are the same as those of $F$ except that $[a_0,b'_0]$ replaces $[a_0,b_0]$. From this it follows that $\psi_0(F)$ belongs to $\mathcal{W}$, and that it is of type $1$ with $S_1(\psi_0(F))=[a_0,b'_0]$ and $S'_1(\psi_0(F))=[a'_0,b_0]$. 

Now given $F$ in $\mathcal{W}_1$, write $S_1(F)=[a_1,b_1]$ and $S'_1(F)=[a'_1,b'_1]$. Then a similar reasoning shows that $\psi_0(F)$ is in $\mathcal{W}_0$ and satisfies $S_0(\psi_0(F))=[a_1,b'_1]$ and $S'_0(\psi_0(F))=[a'_1,b_1]$. 

It is then immediate that $\psi_1$ and $\psi_0$ are inverse to one another, given their definitions.
\end{proof}

The bijections $\psi_i$  satisfy $v(F)=v(\psi_i(F))$ for  $F\in\mathcal{W}_i$: indeed they preserve the number of pieces, total length of the segments and sum of the right endpoints. Moreover, one has $|\Min(F)|=|\Min(\psi_i(F))|$ and so $(-1)^{|U_1(F)|}=-(-1)^{|U_1(\psi_i(F))|}$. Therefore
\[
\sum_{F\in\mathcal{W}_0}v(F)(-1)^{|U_1(F)|}=-\sum_{F\in\mathcal{W}_1}v(F)(-1)^{|U_1(F)|},
\]
  and this proves~\eqref{eq:technical_vanishing}.\qee

\subsection{PPPs and  $321$-avoiding affine permutations}
\label{subsec:BackTo321}
It is now time to link PPPs and $321$-avoiding affine permutations,
and see how one can essentially use Theorem~\ref{thm:enumPPP} --or
more precisely a variant given in Lemma~\ref{lem:calculFG}-- to prove
Theorem~\ref{thm:A-tA}. For completeness, we will also give the proof of the known finite case at the end.\\

Let $(P,c)$ be a PPP associated with the sequence $(a_i,b_i)_{1\leq
i\leq m}$ in $\tilde{\mathbf S}$ so that $m$ is the width of $P$.

\begin{Definition}
A \emph{marked PPP} is a triple $(P,c,j)$, where $(P,c)$ is a PPP and
$j\in \mathbb{N}$ satisfies $a_1\leq j\leq b_1$.
\end{Definition}

In this definition the mark $j$ must be considered graphically as a position on the
first column of $P$. In Figure~\ref{fig:PPPtoAAD}, the chosen position $j$ is denoted by an arrow, and the other possible choices by black squares, naturally indexed by $a_1,a_1+1,\ldots,b_1$ from bottom to top.

We define a {\em weak PPP}, as a PPP where we allow columns of
height zero, and adjacent columns may be incident in a corner, see Figure~\ref{fig:PPPtoAAD}, center. These are bijectively encoded by sequences $(a_i,b_i)_{1\leq i\leq m}$ in $\tilde{\mathbf S}$ where we allow the parts $a_i$ and $b_i$ to be zero.
\smallskip
 
\begin{figure}[!ht]
\includegraphics[width=\textwidth]{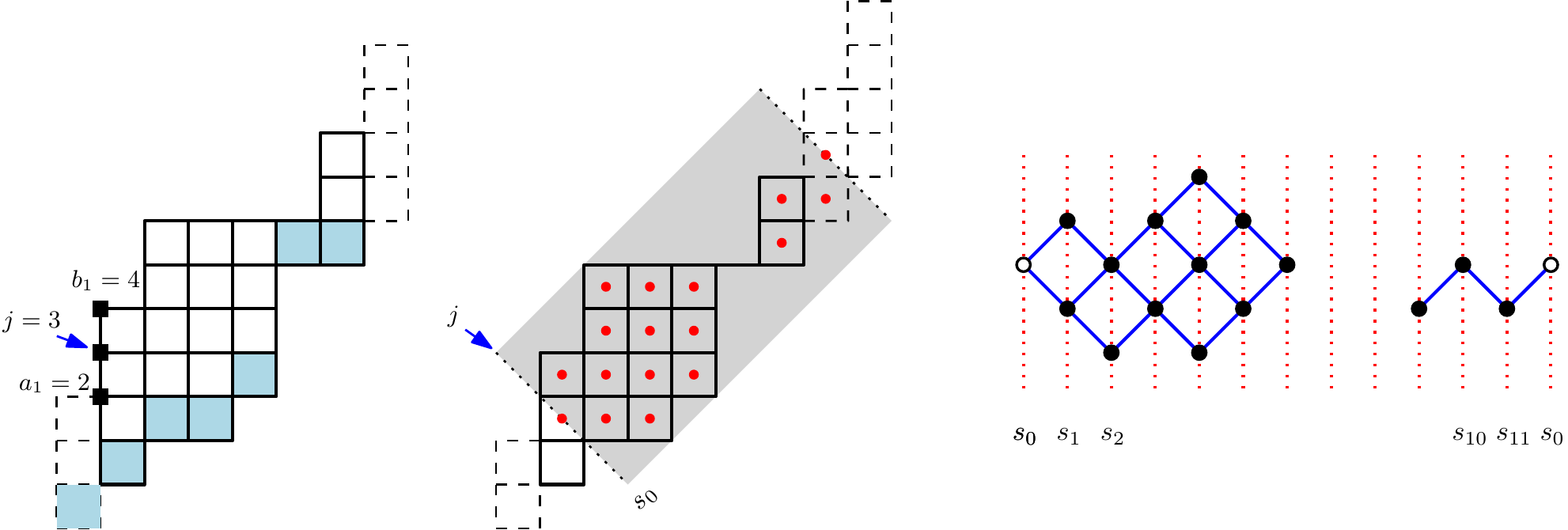}
\caption{From a marked PPP to an affine alternating
diagram.\label{fig:PPPtoAAD}}
\end{figure}

In what follows,  we associate to any marked PPP  an affine alternating diagram. The construction we give is an extension of Viennot's original insight for PPs, see Remark~\ref{rem:PPto321}. 

\smallskip

Given a marked PPP $(P,c,j)$, remove the bottom cell of each column of $P$ to obtain a weak PPP. Rotate it by 45 degrees clockwise. Replacing the cells by points, we get a set of points occurring on $n:=m+\sum_i(b_i-a_i)$ vertical lines, repeated periodically. This parameter $n$ is usually called the \emph{half-perimeter} of $P$. Now, we label by $s_0$ the points occurring on the vertical line indicated by $j$, by $s_1$ the ones on its right, and so on. Since points on adjacent vertical lines alternate from bottom to top, the resulting picture resembles an element of $\widetilde{\mathcal{D}}(n)$.

To check that we indeed have an element in $\widetilde{\mathcal{D}}(n)$ we must ensure that the resulting object actually represents a poset. For this, say that a PPP $(P,c)$ is \emph{rectangular} if $a_i=b_i=M$ for all $i$ where $M$ is a constant (recall that $c=a_1$); equivalently, $P$ is a rectangular polyomino and $c=b_1$. By the construction above, one can check that these rectangular PPPs become the excluded alternating diagrams of Proposition~\ref{prop:caracterisation_diagrams}(3) showed in Figure~\ref{fig:rectangular_shape}, right. It is easy to see that in all other cases this construction produces a genuine affine alternating diagram.

\begin{Proposition}
\label{prop:mPPPtoAAD}
Let $n\geq 1$. The preceding construction is a bijection between
non-rectangular, marked PPPs of half-perimeter $n$ and affine
alternating diagrams of size $n$.
\end{Proposition}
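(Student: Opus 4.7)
The plan is to verify that the described construction lands in $\widetilde{\mathcal{D}}(n)$ and then to exhibit an explicit inverse map from non-rectangular marked PPPs. Writing $n = m + \sum_i (b_i - a_i)$, the construction produces $n$ labeled vertical lines (read cyclically, since points occur periodically) decorated with points. I would first check that each pair of consecutive vertical lines carries an alternating chain of points labeled $s_k, s_{k+1}$, and then apply Proposition~\ref{prop:caracterisation_diagrams} to conclude that these chains assemble into a genuine affine alternating diagram.

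To carry out this verification, I would read off the chains $D_{\{k,k+1\}}$ explicitly from the rotated picture: points on the $s_k$-axis come from the cells of certain adjacent columns of $P$ (after the bottom cell has been removed), and the PP-property $a_{i+1} \leq b_i$ guarantees the alternation from bottom to top. Condition~(1) of Proposition~\ref{prop:caracterisation_diagrams} follows directly because the number of $s_k$-labels in $D_{\{k-1,k\}}$ and in $D_{\{k,k+1\}}$ each equal the total number of cells of $P$ lying on the $s_k$-axis. The non-rectangular hypothesis on $(P,c)$ rules out exactly the two excluded families of chains described in Proposition~\ref{prop:caracterisation_diagrams}~(2)--(3): unraveling the construction, a rectangular PPP (with $a_i = b_i = M$ for all $i$ and $c=M$) produces, after bottom-cell removal and rotation, a uniform periodic pattern matching one of the two configurations of Figure~\ref{fig:rectangular_shape}, while any other PPP yields a set of chains that satisfies the acyclicity condition.

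For the inverse map, given $D \in \widetilde{\mathcal{D}}(n)$, I would rotate its planar depiction by $45$ degrees counterclockwise so that points on the $s_k$-axes become vertical columns in the plane, then add one cell to the bottom of each non-empty column to recover the original polyomino cells, reading off the sequence $(a_i,b_i)_{1 \le i \le m}$ from the resulting configuration. The position of the $s_0$-axis encodes the mark $j$, the value $c = a_1$ encodes the choice of bottom cell on the first column, and the weak inequalities $a_{i+1} \leq b_i$ (for $1 \le i < m$) and $a_1 \leq b_m$ that characterize $\widetilde{\mathbf S}$ (via Proposition~\ref{prop:PPP-S}) come respectively from the alternating nature of adjacent chains and from the periodic closure of the cyclic picture. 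I would then check that the two maps are mutually inverse by noting that they act locally: each column of $P$ corresponds transparently to a block of points on some consecutive $s_k$-axes in $D$, and this local correspondence determines the global bijection.

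The main obstacle will be the cyclic bookkeeping: the marked position $j$ precisely resolves the ambiguity of choosing where $s_0$ sits on the rotated picture, and one must verify that for each PPP $(P,c)$ every choice of $j \in \{a_1,\ldots,b_1\}$ yields a distinct diagram, and that every diagram arises this way. A subtle point is that the half-perimeter $n$ must equal the size of the cyclic index set, which follows from counting the vertical lines of the rotated weak PPP, but requires careful tracking of how columns of heights $b_i - 1$ contribute $(b_i - a_i) + 1$ new axes to the right of their leftmost point. Once this dictionary is established, the boundary case of rectangular PPPs is the only subtlety remaining, and it matches precisely the exclusion in Proposition~\ref{prop:caracterisation_diagrams}, closing the bijection.
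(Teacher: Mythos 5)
Your proposal is correct and follows essentially the same route as the paper: the forward map is checked against Proposition~\ref{prop:caracterisation_diagrams}, the inverse is the counterclockwise rotation with bottom cells re-attached and the first column determined by the $s_0$-diagonal encoded by the mark, and the mutual-inverse verification is reduced to a local check (the paper likewise skips this, deferring to Viennot's construction). One small imprecision: the non-rectangularity hypothesis only excludes the family of Proposition~\ref{prop:caracterisation_diagrams}(3), while the family in (2) is never attained by \emph{any} marked PPP, rectangular or not, since under the inverse rotation it would produce infinite columns (cf.\ the paper's footnote).
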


\begin{proof}
We describe graphically the inverse construction, which we illustrate in Figure~\ref{fig:AADtoPPP}.
Let $D$ be an affine alternating diagram in $\widetilde{\mathcal{D}}(n)$. Replace all its points by cells, see Figure~\ref{fig:AADtoPPP}, left, and rotate this picture 45 degrees counterclockwise. If all labels occur in $D$, this gives a PPP\footnote{Notice that because $D$ is not of the form described in Proposition~\ref{prop:caracterisation_diagrams}(2) (and showed in Figure~\ref{fig:rectangular_shape}, left), the rotation will not result in infinite columns.} up to the choice of the first column (see below). Otherwise, we get a collection of PPs. In this case, if there are $k$ missing labels between two points in $D$, join the corresponding PPs by $k-1$ empty columns, see Figure~\ref{fig:AADtoPPP}, center, for an illustration.

We get a weak PPP up to the fact that we must choose its first column. For this, note that a marking $j$ on the north-west boundary  is determined by recording the diagonal corresponding to $s_0$. The first column is then chosen to be the one to the right of the marking. To get a marked PPP, we simply add a cell at the bottom of each column, see Figure~\ref{fig:AADtoPPP}, right, and we record the marking $j$.

We skip the verification that we indeed described the desired inverse, since it is essentially similar to Viennot's construction, see Remark~\ref{rem:PPto321}.

\end{proof}

\begin{figure}[!ht]
\includegraphics[width=\textwidth]{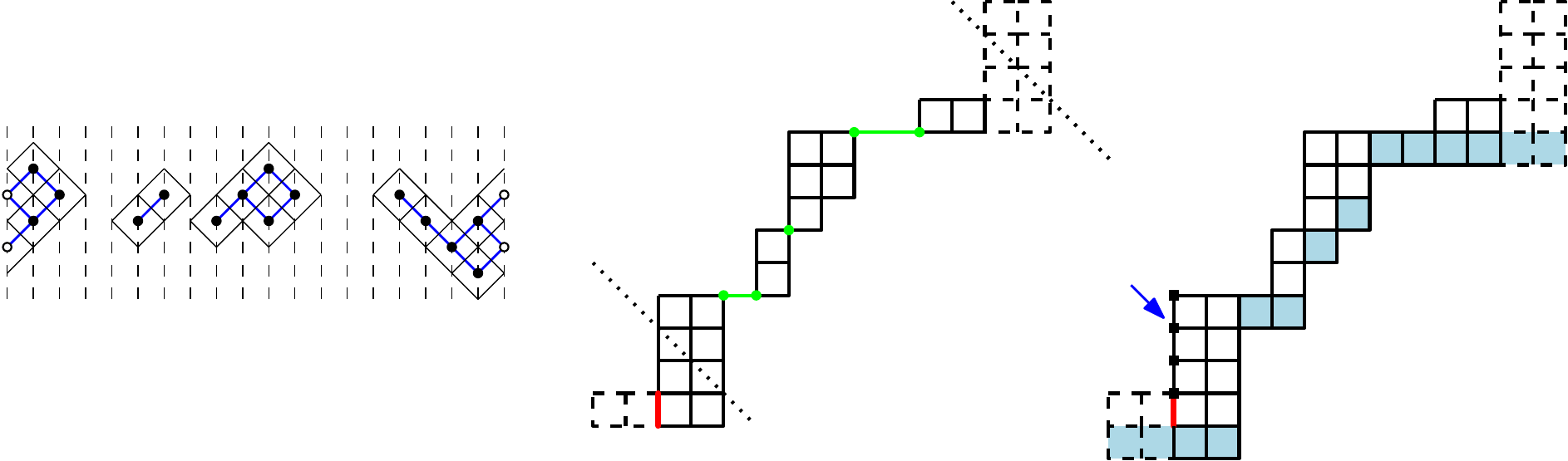}
\caption{From an affine alternating diagram to a marked
PPP.\label{fig:AADtoPPP}}
\end{figure}

We know that PPPs are in bijection with heaps in $\Ht$, see Proposition \ref{prop:bijcyclicheap}. This induces a bijection between marked PPPs and $\Ht^*$, the set of heaps in $\Ht$ with a distinguished point on their rightmost
maximal segment. Taking weights into account, Theorem~\ref{thm:bijectionSD},  \eqref{eq:polyominoes_as_heaps} left, and  Proposition \ref{prop:mPPPtoAAD} imply

\begin{equation}\label{atildewithH}
\tS(x,q)=\sum_{H\in
\Ht^*}x^{\ell(H)+|H|}q^{e(H)-|H|}-\sum_{n\geq1}\frac{x^nq^n}{1-q^n}.
\end{equation}

The subtracted term enumerates rectangular PPPs. To handle the first sum in this expression, one needs to consider marked versions of Lemmas~\ref{lem:imagephi} and
\ref{lem:psi_bijective}, as will be done in the next result.
\begin{Lemma}\label{lem:calculFG}
We have:
\[
\sum_{H\in \Ht^*}x^{\ell(H)}y^{|H|}q^{e(H)}=-x\frac{\partial_x N(x,y,q)}{N(x,y,q)}.
\]
\end{Lemma}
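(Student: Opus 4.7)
I would prove the lemma by adapting the argument of Theorem~\ref{thm:enumPPP} to the marked setting. Set $G(x,y,q):=\sum_{H\in\Ht^*}v(H)$ and consider the composition $\bar\phi:\cT\times\Ht^*\to\H^{\bullet}$, $\bar\phi(T,(E,p))=(T*E,p)$, into the set $\H^{\bullet}$ of heaps carrying a distinguished point. A marked analog of Lemma~\ref{lem:imagephi} characterizes the antecedents of any $(F,p)$: the crucial observation, already contained in the proof of Lemma~\ref{lem:imagephi}, is that for every valid decomposition $F=T*E$ with $E\in\Ht$, the rightmost maximal segment of $E$ coincides with $S_F$ whenever $F$ is non-trivial, so $p$ is canonically constrained to lie on $S_F$ independently of $U$. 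Signed double counting then yields
\[
N\cdot G=\sum_{F\in\cT}(-1)^{|F|-1}v(F)\sum_{S\in F}\ell(S)+\sum_{F\in\mathcal{W}}(-1)^{|U_1(F)|}\ell(S_F)v(F),
\]
where $\ell(S)$ records the number of admissible marked positions on $S$. The first (trivial) sum collapses to $-x\partial_xN$ via the identity $x\partial_xN=\sum_F(-1)^{|F|}\ell(F)v(F)$.

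The lemma thus reduces to the vanishing of the second (non-trivial) sum, which is the marked analog of Lemma~\ref{lem:psi_bijective}. I would establish this by proving that the involution $\psi:\mathcal{W}_0\leftrightarrow\mathcal{W}_1$ of Lemma~\ref{lem:psi_bijective} preserves the rightmost maximal segment, in the sense that $S_{\psi(F)}=S_F$ and in particular $\ell(S_{\psi(F)})=\ell(S_F)$. Granted this, $\psi$ extends at once to a sign-reversing involution on marked pairs $(F,p)$ by transporting $p$ unchanged, and the non-trivial sum cancels pairwise.

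The main obstacle is the invariance of $S_F$ under $\psi$: a priori, $\psi$ modifies the right endpoints of $S_0,S'_0$ (for $F\in\mathcal{W}_0$) or $S_1,S'_1$ (for $F\in\mathcal{W}_1$), which could conceivably affect which segment is rightmost in $\Max\setminus\Min$. The decisive observation is that neither $S_0$ nor $S'_0$ equals $S_F$: the first because $S_0\in\Min(F)$ whereas $S_F\in\Max(F)\setminus\Min(F)$; the second because $S'_0=S_F$ would force the minima of $F'$ concurrent with $S_0$ to reduce to $\{S_F\}$, making $F'$ a single-segment heap and hence in $\Ht$, in contradiction with the defining condition $F'\notin\Ht$ of type-$0$ heaps. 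Consequently $S_F$ lives in the piece $B$ of the decomposition $F=A*S_0*S'_0*B$, which is untouched by $\psi$; a careful check of how the exchanged right endpoints in $[a_0,b'_0]$ and $[a'_0,b_0]$ interact with $B$ then confirms that $S_F$ remains the rightmost element of $\Max\setminus\Min$ in $\psi(F)$. A symmetric argument handles $\mathcal{W}_1$, and the proof concludes.
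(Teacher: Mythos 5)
Your proposal follows the paper's own argument essentially step for step: the same marked double counting via $\phi$ restricted to $\mathcal T\times\Ht^*$, the same observation that the mark is forced onto $S_F$ for non-trivial $F$ so the trivial part collapses to $-x\partial_x N$, and the same appeal to the involutions $\psi_0,\psi_1$ of Lemma~\ref{lem:psi_bijective}, whose key property (that they never involve the canonically marked segment $S_F=[a_F,b_F]$) you actually justify in more detail than the paper does. The proof is correct and takes the same route.
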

\begin{proof}

We examine the image $\phi(H,T)$ where $H\in\Ht^*$ and
$T\in\mathcal{T}$, as in Lemma~\ref{lem:imagephi}, where the marked
segment in $H$ naturally becomes a marked segment in $F$. Each step in
the proof of the latter is still valid by replacing $\Ht$ by $\Ht^*$, 
noting that when $F$ is trivial, any of its segments can be marked,
and when it is not trivial the marked segment is $[a_F,b_F]$.
Therefore we get
\begin{multline*}
\left( \sum_{T\in \mathcal{T}} (-1)^{|T|}v(T)\right)\left(\sum_{H\in
\Ht^*} v(H)\right)
= \sum_{F\in\mathcal{T}}\ell(F)(-1)^{|F|-1}v(F)\\
+\sum_{F, F\setminus U_1(F)\in\Ht^*}v(F)\left(\sum_{U_1(F)\subseteq
U\subseteq U_2(F)}(-1)^{|U|}\right).
\end{multline*}
The first sum on the right-hand side is equal to $-x\partial_x N(x,y,q)$. It remains to see that again the second sum vanishes. This is still immediate when $U_1(F)\neq U_2(F)$. Otherwise, applying the bijections $\psi_0$ and $\psi_1$ of
Lemma~\ref{lem:psi_bijective} and noting that the marked segment
$[a_F,b_F]$ is never involved in their constructions, we derive the
result.
\end{proof}

Summarizing, Lemma~\ref{lem:calculFG} and~\eqref{atildewithH} together yield
\begin{eqnarray*}
\tS(x,q)&=&-x\frac{\partial_xN(x,x/q,q)}{N(x,x/q,q)}-\sum_{n\geq1}\frac{x^nq^n}{1-q^n}\\
&=&-x\frac{J'(x)}{J(x)}-\sum_{n\geq1}\frac{x^nq^n}{1-q^n},
\end{eqnarray*}
which is the desired expression from Theorem~\ref{thm:A-tA}.

\begin{Remark}\label{rem:PPto321}
The case of PPs, due to Viennot (see~\cite{ViennotWeb}), corresponds to marked PPPs of the special form $(P,1,1)$. These are in bijection with diagrams having no point labeled $s_0$, \emph{i.e.} finite diagrams, by restriction of the correspondence above. This gives the identity:
\begin{equation}
\label{eq:SPP}
S(x,q)=PP(x,x/q,q).
\end{equation}
To prove the first claim of Theorem~\ref{thm:A-tA}, namely $S(x,q)=\frac 1
{1-xq}\frac{\JJ(xq)}{\JJ(x)}$, where $J$ is defined in~\eqref{J-def}, it suffices to combine ~\eqref{eq:enumPP} and~\eqref{eq:SPP}, since we have that $N(x,x/q,q)=J(x)$ and $\hat{N}(x,x/q,q)=-xJ(xq)/(1-xq)$ by comparing their expressions in
\eqref{eq:trivialheaps} and~\eqref{J-def}.
\end{Remark}

\section{Further questions}\label{sec:furtherquestions}

\noindent{\bf Other types.} A natural question arises in view of Section~\ref{sec:heaps-monomeres-dimeres}, regarding FC elements in other types. Indeed, in~\cite{BBJN},  the bivariate generating functions \eqref{eq:bivariate} were explicitly computed for FC elements in all classical finite and affine types, while a complete description in terms of (alternating) diagrams was given in~\cite{BJN-long}. Although the formulas are not as nice as in Theorem~\ref{thm:A-tA}, there are two interesting cases which can presumably be treated through the approach by heaps of monomers and dimers: the alternating FC elements of finite type $B$ and affine type $\widetilde{C}$. For instance, in type $B$, these alternating FC elements are called FC top elements of $B_n$ in~\cite{St3}. They  are a subfamily of FC elements (the remaining type $B$ FC elements are called left-peaks in~\cite{BJN-long}), and their generating function is given in~\cite{BBJN} by:
\begin{equation}\label{typeBalt}
\frac {1}{\JJ(x)}\,\sum_{n\ge 0} \frac{x^n q^{n+1\choose 2} }{(xq;q)_{n}}\sum_{k=0}^{n}\frac{(-1)^k}{(q;q)_k},
\end{equation}
where $\JJ(x)$ is defined in~\eqref{J-def}. These FC elements are in correspondence with walks on the graph of Figure~\ref{fig:Graphe_Motzbic} starting at any vertex and ending at vertex $0$.  In terms of heaps, Theorem~\ref{thm:bijHeapsCycles} yields a bijection with pyramids of monomers and dimers in the set $\cH(\cP_{md}^*,\cC)$ (see Section~\ref{sec:heaps-monomeres-dimeres}), except that their  unique  maximal piece is an additional piece of the form $[0;i]$, with weight $x^iq^{\bi{i+1}{2}}$, for a nonnegative integer $i$. Thanks to the Inversion Lemma, the generating function of these objects is given by
$$\frac{1}{j(x)}\sum_{i\geq0}x^iq^{\bi{i+1}{2}}h(xq^{i+1})=\frac{1}{(xq;q)_\infty\JJ(x)}\sum_{i,j\geq0}x^iq^{\bi{i+1}{2}}(-x)^jq^{\bi{j}{2}}\frac{(xq^{i+j+1};q)_\infty}{(q;q)_j},$$
where $h(x)$ and $j(x)$ are defined in~\eqref{h-def} and~\eqref{j-def}, respectively. By setting $n=i+j$ and $k=j$ in the above double summation, we derive~\eqref{typeBalt}.

For affine type $\widetilde{C}$, the walks we have to count are the ones starting and ending  at any vertex on the graph of Figure~\ref{fig:Graphe_Motzbic}. Therefore it would be interesting to find the corresponding formula by using heaps of monomers and dimers. Of course this approach should also give the generating function of FC involutions in types $B$ and $\widetilde{C}$.
\\

\noindent{\bf Pyramids and PPPs.} 
Recall from Section~\ref{sec:heaps} the set $\Pi\subset \H$ of pyramids. An application of Corollary~\ref{Cor:enumpyramids} in the context of Section~\ref{sec:parallelogram} gives immediately 
\[\sum_{H\in \Pi}v(H)=-y\frac{\partial_y N(x,y,q)}{N(x,y,q)}\]
which is also equal to $\sum_{H\in \Ht}v(H)=PPP(x,y,q)$ by \eqref{eq:polyominoes_as_heaps} left, and Theorem~\ref{thm:enumPPP}.
 
  A weight-preserving bijection between the sets $\Ht$ and $\Pi$ still eludes us though; equivalently, one would like a direct way of encoding PPPs as pyramids. This would simplify the proof of their enumeration.
  \\

\noindent{\bf Involutions and PPPs.} In Section~\ref{sec:heaps-monomeres-dimeres}, we gave a bijective proof of Theorem~\ref{thm:A-tA} which also yielded Theorem~\ref{thm:A-tA-inv}, \emph{i.e.}, the case of involutions. The approach by PPPs in Section~\ref{sec:parallelogram} does specialize nicely in the same way. It is interesting to look for an encoding of $321$-avoiding affine involutions (or, equivalently, of self-dual diagrams) in the same spirit as PPPs, which would give an alternative proof of Theorem~\ref{thm:A-tA-inv}.
\\

\noindent{\bf Acknowledgements.} The authors thank Mireille Bousquet-M\'elou for helpful and inspiring discussions at the start of this project. 



\end{document}